\newcommand{\EQ}{\begin{eqnarray}}
\newcommand{\EN}{\end{eqnarray}}
\newcommand{\EQQ}{\begin{eqnarray*}}
\newcommand{\ENN}{\end{eqnarray*}}
\newcommand{\nnum}{\nonumber}
\newcommand{\eps}				{{\epsilon}}
\newcommand{\R}		{{\mathds{R}}}
\newcommand{\er}[1]	{{(\ref{#1})}}
\title{Optimization Methods on Riemannian Manifolds via Extremum Seeking Algorithms \thanks{This work was supported by the Australian Research Council Discovery Project DP120101144.}}
\author{Farzin Taringoo$^{\dagger}$,  Peter M. Dower$^{\dagger}$, Dragan Ne\v{s}i\'{c}$^{\dagger}$ and Ying Tan \thanks{Department of Electrical and Electronic Engineering, The University of Melbourne, \{ftaringoo, pdower, dnesic, yint\}@unimelb.edu.au.}}
\begin{document}

\maketitle

\begin{abstract}
This  paper formulates the problem of Extremum Seeking for optimization of cost functions defined on Riemannian manifolds. We extend the conventional extremum seeking algorithms  for optimization problems in Euclidean spaces to optimization of cost functions defined on smooth Riemannian manifolds. This problem falls within the category of online optimization methods. We introduce the notion of geodesic dithers which is a perturbation of the optimizing trajectory in the tangent bundle of the ambient state manifolds and obtain the extremum seeking closed loop as a perturbation of the averaged gradient system. The main results are obtained by applying closeness of solutions and averaging theory on Riemannian manifolds. The main results are further extended for optimization on Lie groups.  Numerical examples on Riemannian manifolds (Lie groups) $SO(3)$ and $SE(3)$ are also presented at the end of the paper.

\end{abstract}

\begin{keywords}
Extremum Seeking Control, Riemannian Manifolds.
\end{keywords}

\begin{AMS}
34A38, 49N25, 34K34, 49K30, 93B27
\end{AMS}

\pagestyle{myheadings}
\thispagestyle{plain}
\markboth{Farzin Taringoo,  Peter M. Dower, Dragan Ne\v{s}i\'{c} and Ying Tan}{Optimization Methods on Riemannian Manifolds via Extremum Seeking Algorithms}

\section{Introduction}

Optimization on manifolds is an important research area in optimization theory, see \cite{abs,udr, smith}. In this class of problems, the underlying optimization space is a manifold and consequently the analysis  differs from the standard optimization algorithms in Euclidean spaces. Numerical techniques and methods for optimization on manifolds should guarantee that in each step an optimizer is an element of the search space which is a manifold. Hence, optimization methods on manifolds are closely related to geometry of manifolds, see \cite{abs, udr, gabay, Luenberger}. 

As known, smooth manifolds can be embedded in high dimensional Euclidean spaces (Whitney Theorem) \cite{wit}. This means that optimization on manifolds can be carried out as constrained optimization problems in Euclidean spaces with sufficiently large dimensions. However, the corresponding embeddings for each particular manifold may not be available and algorithms developed on manifolds may be more efficient in terms of convergence speed and calculation burden \cite{ring}. The algorithms investigated in this field range from simple line search methods to more sophisticated algorithms such as trust region methods, see for example \cite{ yang, Bak}.

Optimization on manifolds can arise in a wide range of applications where the search space is constrained. Its applications may appear in signal processing \cite{man}, robotics \cite{hel}, and statistics \cite{fre}. The main underlying assumption in most of the numerical algorithms presented for optimization on manifolds is that the cost function is available. This makes the implementation of numerical algorithms simple since various numerical methods can be applied to calculate the sensitivity of cost functions to obtain numerical optimization trajectories. However, in many problems, cost functions may not be given in a well defined closed form. This necessitates generating a class of numerical methods which do not explicitly depend on the closed form of cost functions derivatives \cite{rom, shub}. This is the main motivation of this paper.  

Extremum seeking is a class of  on-line or real-time optimization methods for optimization of the 
steady-state  behavior of dynamical systems \cite{krs1}. This method is applied for optimization of both static functions and dynamical systems where the optimizer does not have a model of the cost/utility function or dynamical models. In other words, either cost functions or dynamical systems are unknown for the optimization procedure and the optimization algorithm should be able to converge to a vicinity of a local optimizer, see \cite{Khon, Guay2, guay, krs1, tan, nes6,nes5,nes55,teel, hans}. In this paper we only consider the extremum seeking algorithms for optimization of static cost functions and consequently we assume that cost functions are not available for the optimization procedure.

Extremum seeking finds its applications in a vast area of dynamical systems including robotics and mechanical systems. As is known mechanical systems are mathematically modeled on manifolds which do not necessarily possess vector space properties, see \cite{mar1, bloch, Lewis}. 
  Traditionally, extremum seeking systems have been analyzed in the class of unconstrained optimization methods on $\mathds{R}^{n}$ where the vector space properties of Euclidean spaces simplify the analysis. 
	
	In a more general framework, the underlying Euclidean spaces can be replaced by Riemannian manifolds. That is to say, we change an unconstrained optimization problem to a constrained one where the constraints are imposed by the ambient manifold spaces.  This necessitates a generalization of the extremum seeking framework for optimization on manifolds. To this end, we define a class of online optimization methods where the optimization trajectories lie on manifolds. As an example, the standard gradient descent and Newton methods are extended to their geometric versions by employing the notion of geodesics on Riemannian manifolds, see \cite{smith, Luenberger}. 
In this paper this step is done for extremum seeking algorithms by introducing the so-called \textit{geodesic dithers} which are the geometric versions of dither signals in standard extremum seeking framework, see \cite{krs1,tan}. By employing the geodesic dithers, we guarantee that during the optimization phase optimizing trajectories always lie on state manifolds. To analyze the behavior of the closed loop system, we employ averaging techniques  developed for dynamical systems on Riemannian manifolds and apply results of closeness of solutions to obtain closeness of optimizing trajectories to local optimizers.
	
	A recent version of extremum seeking algorithms for optimization of cost functions on submanifolds of Euclidean spaces appeared in \cite{hans2}. In \cite{hans2}, the authors analyzed an extremum seeking algorithm based on the Lie bracket approach. The method presented in \cite{hans2} is based upon embeddings of manifolds in Euclidean spaces and the techniques are inherited from extremum seeking algorithms in Euclidean spaces. However, in general, such embeddings may not be always available and implementation of extremum seeking algorithms on general Riemannian manifolds requires  geometric extensions of methods developed in Euclidean spaces.

In terms of exposition,  Section \ref{s2}  presents some  mathematical preliminaries needed for the analysis of the paper. Section \ref{s3} presents the extremum seeking problems on Riemannian manifolds and in Section \ref{slie} we extend the extremum seeking algorithm for optimization on Lie groups. In Sections \ref{s4} and \ref{s5} we present simple optimization examples on  Lie groups $SO(3)$ and $SE(3)$ by applying the extremum seeking methods developed in Section \ref{s3}. 
   
   \section{Preliminaries}\label{s2}In this section we provide the differential geometric material which is necessary for the analysis presented in the rest of the paper.  We define some of the frequently used symbols of this paper in Table \ref{table1}.\\
	\begin{table}[ht]
\label{table1}
\caption{Symbols and Their Descriptions} 
\centering 
\begin{tabular}{c c} 
\hline\hline 
Symbol& Description  \\ [0.5ex] 
\hline 
$M$ & Riemannian manifold \\ 
$G$ & Lie Group\\
$\star$ & Lie group operation\\
$\mathfrak{X}(M)$ & space of smooth time invariant\\ &vector fields on $M$\\ 
$\mathfrak{X}(M\times \mathds{R})$ & space of smooth time varying\\ &vector fields on $M$\\ 
$\nabla$ & Levi-Civita connection\\
$T_{x}M$ & tangent space at $x\in M$  \\
$TM$ & tangent bundle of $M$\\
$T^{*}_{x}M$ &  cotangent space at $x\in M$\\
$T^{*}M$ & cotangent bundle of $M$\\
$\frac{\partial}{\partial x_{i}}$ & basis tangent vectors at $x\in M$\\
$dx_{i}$ & basis cotangent vectors at $x\in M$\\
$f(x,t)$ & time varying vector fields on $M$  \\
$||\cdot||_{g^{M}}$ & Riemannian norm\\
$g^{M}(\cdot,\cdot)$ & Riemannian metric on $M$ \\ 
$d(\cdot,\cdot)$ & Riemannian distance on $M$\\
$C^{\infty}(M)$ & Space of smooth functions on $M$\\
$\Phi_{f}$ & flow associated with $f$\\
$TF$ & pushforward of $F$\\
$T_{x}F$ & pushforward of $F$ at $x$\\
$\R_{>0}$& $(0,\infty)$\\
$\R_{\geq 0}$& $[0,\infty)$\\
\hline 
\end{tabular}
\label{table:nonlin} 
\end{table}
\subsection{Riemannian manifolds}
\begin{definition}[see \cite{Lee2}, Chapter 3]
\label{def1}
A Riemannian manifold $(M,g^{M})$ is a differentiable manifold $M$ together with a Riemannian metric $g^{M}$, where $g^{M}$ is defined for each $x\in M$ via an inner product $g^{M}_{x}:T_x M\times T_x M\rightarrow\R$ on the tangent space $T_x M$ (to $M$ at $x$) such that the function defined by $x\mapsto g^{M}_x(X(x),Y(x))$ is smooth for any vector fields $X,Y\in\mathfrak{X}(M)$. In addition,
\begin{enumerate}[(i)]
\item $(M,g^{M})$ is $n$ dimensional if $M$ is $n$ dimensional;
\item $(M,g^{M})$ is connected if for any $x,y\in M$, there exists a piecewise smooth curve that connects $x$ to $y$. 
\end{enumerate}

\end{definition}

Note that in the special case where $M\doteq\mathds{R}^{n}$, the Riemannian metric $g^{M}$ is defined everywhere by $g^{M}_{x}\doteq\sum^{n}_{i=1}g_{ij}(x)dx_{i}\otimes dx_{i}$, $x\in M$, where $\otimes $ is the tensor product on $T^{*}_{x}M\times T^{*}_{x}M $ and $g_{ij}$ is the $(i,j)$ entity of $g^{M}_{x}$, see \cite{Lee2}.

As formalized in Definition \ref{def1}, connected Riemannian manifolds possess the property that any pair of points $x,y\in M$ can be connected via a path $\gamma\in\mathscr{P}(x,y)$, where
\EQ
	\mathscr{P}(x,y)
	\doteq \left\{ \gamma:[a,b]\rightarrow M \, \left| \, \begin{array}{c} 
			\gamma \mbox{ piecewise smooth,}
			\\
			\gamma(a) = x\,, \ \gamma(b) = y
		\end{array} \right. \right\}.
	\label{eq:paths}
\EN

\begin{theorem}[ \hspace{-.1cm}\cite{Lee3}, P. 94]
\label{thm:path}
Suppose $(M,g^{M})$ is an $n$ dimensional connected Riemannian manifold. Then, for any $x,y\in M$, there exists a piecewise smooth path $\gamma\in\mathscr{P}(x,y)$ that connects $x$ to $y$.

\end{theorem}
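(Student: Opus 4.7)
The plan is to use a standard connectedness argument: fix a basepoint $x \in M$ and show that the set of points in $M$ reachable from $x$ via a piecewise smooth path is both open and closed, hence equal to $M$ by connectedness.

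More precisely, I would fix $x \in M$ and define
\[
	U_x \doteq \{ y \in M \, : \, \mathscr{P}(x,y) \neq \emptyset \}.
\]
The set $U_x$ is nonempty since it contains $x$ itself (take the constant path $\gamma \equiv x$, which is trivially piecewise smooth). The bulk of the argument is then to establish that $U_x$ is both open and closed in $M$.

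For openness, pick $y \in U_x$ and let $\gamma_1 \in \mathscr{P}(x,y)$ be a piecewise smooth path. Choose a smooth coordinate chart $(V,\varphi)$ around $y$ with $\varphi(V)$ an open ball in $\R^n$ centered at $\varphi(y)$. For any $z \in V$, the line segment in coordinates joining $\varphi(y)$ to $\varphi(z)$ pulls back via $\varphi^{-1}$ to a smooth curve $\gamma_2$ from $y$ to $z$ in $M$. The concatenation $\gamma_1 \cdot \gamma_2$ is then an element of $\mathscr{P}(x,z)$ (piecewise smooth at the joining point $y$), so $V \subseteq U_x$ and $U_x$ is open. For closedness, an essentially identical argument applies: given $y \in \overline{U_x}$, pick a coordinate chart $V$ around $y$ as above, and then a point $y' \in V \cap U_x$, which exists by the definition of closure. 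Concatenating a piecewise smooth path from $x$ to $y'$ with the coordinate line-segment path from $y'$ to $y$ inside $V$ yields a piecewise smooth path in $\mathscr{P}(x,y)$, so $y \in U_x$.

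Combining openness, closedness, and nonemptiness with the connectedness hypothesis on $M$, we conclude $U_x = M$, which is precisely the claim. The only subtle point in this outline is the verification that the concatenated curves lie in $\mathscr{P}(x,y)$ as defined in \eqref{eq:paths}; but this is immediate because piecewise smoothness allows the concatenation point to be a non-smooth joint, so no matching of derivatives across $y$ or $y'$ is required. I do not anticipate any genuine obstacle here, as the Riemannian metric plays no role in the argument beyond providing the smooth manifold structure already assumed.
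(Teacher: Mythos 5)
Your argument is correct: the open--closed connectedness argument (reach set $U_x$ nonempty, open and closed via concatenation with coordinate line segments, hence all of $M$) is exactly the standard proof, and the paper itself offers no proof of this statement, simply citing \cite{Lee3}, where the same argument is used. The one point worth keeping explicit is the observation you already make, namely that concatenation stays inside $\mathscr{P}(x,y)$ as defined in (\ref{eq:paths}) because piecewise smoothness tolerates the corner at the joining point, so nothing further is needed.
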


The existence of connecting paths (via Theorem \ref{thm:path}) between pairs of elements of an $n$ dimensional connected Riemannian manifold $(M,g^{M})$ facilitates the definition of a corresponding Riemannian distance. In particular, the Riemannian distance $d:M\times M\rightarrow\R$ is defined by the infimal path length between any two elements of $M$, with
\EQ\label{dist}
	d(x,y)
	& \doteq \inf_{\gamma\in\mathscr{P}(x,y)} \int_a^b
						\sqrt{g^{M}_{\gamma(t)} (\dot\gamma(t),\, \dot\gamma(t))}\, dt\,.
	\label{eq:distance-via-paths}
\EN
Note that since $\mathscr{P}(x,y)$ contains piecewise smooth paths connecting $x$ and $y$ then $\dot{\gamma}$ corresponds to left and right derivatives at non-smooth points of $\gamma$.

Using the definition of Riemannian distance $d$ of \er{eq:distance-via-paths}, it may be shown that $(M,d)$ defines a metric space, see \cite{Lee3}.
%
Next, the crucial concept of \textit{pushforward} operators is introduced.
\begin{definition}
For a given smooth mapping $F:M\rightarrow N$ from manifold $M$ to manifold $N$ the pushforward $TF$ is defined as a generalization of the Jacobian of smooth maps between Euclidean spaces, with
\EQ TF:TM\rightarrow TN,\nnum\EN
where 
\EQ T_{x}F:T_{x}M\rightarrow T_{F(x)}N,\nnum\EN
and 
\EQ T_{x}F(X_{x})\circ h=X_{x}(h\circ F),\hspace{.2cm}x\in M, X_{x}\in T_{x}M, h\in C^{\infty}(N).\nnum\EN
\end{definition}
  \subsection{Geodesic Curves}
   Geodesics are defined  \cite{jost} as length minimizing curves on Riemannian manifolds which satisfy
   \EQ \nabla_{\dot{\gamma}(t)}\dot{\gamma}(t)=0,\nnum\EN
   where $\gamma(\cdot)$ is a geodesic curve on $(M,g^{M})$ and $\nabla$ is the \textit{Levi-Civita} connection on $M$, see \cite{Lee3}.
    The solution of the Euler-Lagrange variational problem associated with the length minimizing problem shows that  all the geodesics on an $n$ dimensional Riemannian manifold  $(M,g^{M})$  must satisfy the following system of ordinary differential equations:
\EQ \label{geo}\ddot{\gamma}_{i}(s)+\sum^{n}_{j,k=1}\Gamma^{i}_{j,k}\dot{\gamma}_{j}(s)\dot{\gamma}_{k}(s)=0,\quad i=1,...,n,\EN
where
\EQ\label{cris} \Gamma^{i}_{j,k}=\frac{1}{2}\sum^{n}_{l=1}g^{il}(g_{jl,k}+g_{kl,j}-g_{jk,l}),\quad g_{jl,k}=\frac{\partial g_{jl}}{\partial x_{k}},\EN
where all the indices  $i,j,k,l$ run from $1$ up to $n=dim(M)$ and $[g^{ij}]\doteq[g_{ij}]^{-1}$. Recall that $g_{ij}$ is the $(i,j)$ entity of the matrix $g^{M}_{x}$.
 \begin{definition}[ \hspace{-.1cm}\cite{Lee3}, P. 72]
\label{d1}
 The restricted exponential map is defined by 
 \EQ \exp_{x}:T_{x}M\rightarrow M,\hspace{.2cm}\exp_{x}(v)=\gamma_{v}(1), v\in T_{x}M,\nnum\EN
 where $\gamma_{v}(1)$ is the unique maximal geodesic \cite[P. 59]{Lee3} initiating from $x$ with the velocity $v$ up to one as a solution of (\ref{geo}).
 \end{definition}

 Throughout, \textit{restricted exponential maps} are referred to as \textit{exponential maps}.
An open ball of radius $\delta>0$ and centered at $0\in T_x M$ in the tangent space at $x$ is denoted 
by $B_{\delta}(0)\doteq\{v\in T_{x}M\hspace{.1cm}|\hspace{.2cm}||v||_{g^{M}}<\delta\}$. Similarly, the corresponding closed ball is denoted by $\overline{B}_\delta(0)$. Using the local  diffeomorphic property of exponential maps, the corresponding geodesic ball centered at $x$ is defined as follows. 
 
 \begin{definition}
 For a vector space $V$, a \textit{star-shaped neighborhood} of $0\in V$ is any open set $U$ such that if $u\in U$ then $\alpha u\in U, \alpha\in[0,1]$.
 \end{definition}
 \begin{definition}[ \hspace{-.1cm}\cite{Lee3}, p. 76]
 A normal neighborhood around $x\in M$ is any open neighborhood  of $x$ which is a diffeomorphic image of a star shaped neighborhood of $0\in T_{x}M$ under the exponential map $\exp_{x}$.
 \end{definition}
 \begin{lemma}[ \hspace{-.1cm}\cite{Lee3}, Lemma 5.10]
 \label{eun}
 For any $x\in M$, there exists a neighborhood $B_{\delta}(0)$ in $T_{x}M$ on which $\exp_{x}$ is a diffeomorphism onto $\exp_{x}(B_{\delta}(0))\subset M$. 
 \end{lemma}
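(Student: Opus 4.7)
The plan is to establish the lemma by showing that the differential $T_{0}\exp_{x}$ at the origin of $T_{x}M$ is the identity map under the canonical identification $T_{0}(T_{x}M)\cong T_{x}M$, and then invoking the inverse function theorem to obtain a local diffeomorphism on a sufficiently small open ball around $0$.

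First I would verify that $\exp_{x}$ is well defined and smooth on some open neighborhood of $0\in T_{x}M$. This follows from the smooth dependence of solutions of the geodesic ODE (\ref{geo}) on initial conditions: for any $v\in T_{x}M$ there is a unique maximal geodesic $\gamma_{v}$ with $\gamma_{v}(0)=x$ and $\dot\gamma_{v}(0)=v$, and by standard ODE theory the map $(x,v,t)\mapsto\gamma_{v}(t)$ is smooth on its domain. A rescaling argument using the homogeneity $\gamma_{\alpha v}(t)=\gamma_{v}(\alpha t)$ shows that $\exp_{x}(v)=\gamma_{v}(1)$ is defined on an open neighborhood of $0\in T_{x}M$ and inherits smoothness there.

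Next I would compute $T_{0}\exp_{x}$. Given $v\in T_{x}M$, regarded as a tangent vector to $T_{x}M$ at $0$ via the canonical identification, I would evaluate $T_{0}\exp_{x}(v)$ on a test function $h\in C^{\infty}(M)$ by differentiating along the curve $s\mapsto sv$ in $T_{x}M$. Using the rescaling identity and the definition of $\exp_{x}$, one has $\exp_{x}(sv)=\gamma_{v}(s)$, hence
\begin{equation*}
T_{0}\exp_{x}(v)\,h \;=\; \left.\tfrac{d}{ds}\right|_{s=0} h\!\left(\exp_{x}(sv)\right) \;=\; \left.\tfrac{d}{ds}\right|_{s=0} h(\gamma_{v}(s)) \;=\; \dot\gamma_{v}(0)\,h \;=\; v\,h.
\end{equation*}
Thus $T_{0}\exp_{x}$ is the identity on $T_{x}M$ and in particular is a linear isomorphism.

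Finally, since $\exp_{x}$ is smooth near $0$ with invertible differential at $0$, the inverse function theorem provides an open neighborhood $U\subset T_{x}M$ of $0$ on which $\exp_{x}$ is a diffeomorphism onto its image in $M$. Choosing $\delta>0$ so small that $B_{\delta}(0)\subset U$ yields the required neighborhood, and the restriction $\exp_{x}\bigr|_{B_{\delta}(0)}$ is then a diffeomorphism onto $\exp_{x}(B_{\delta}(0))\subset M$. The only subtlety in the argument is the careful use of the canonical identification $T_{0}(T_{x}M)\cong T_{x}M$ in computing $T_{0}\exp_{x}$, since one must treat $T_{x}M$ simultaneously as a manifold in its own right and as the fiber of $TM$ over $x$; everything else is a direct application of smooth ODE dependence and the inverse function theorem.
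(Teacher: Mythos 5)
Your proposal is correct and is essentially the standard argument: the paper itself gives no proof of this lemma, citing it directly from \cite{Lee3} (Lemma 5.10), and your proof — smoothness of $\exp_{x}$ near $0$ via smooth ODE dependence and homogeneity of geodesics, the computation $T_{0}\exp_{x}=\mathrm{id}$ under the identification $T_{0}(T_{x}M)\cong T_{x}M$, and the inverse function theorem — is precisely the argument used in that reference. No gaps; the handling of the canonical identification is the only delicate point and you treat it correctly.
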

 \begin{definition}[\hspace{-.01cm}\cite{Lee3}, p. 76]
 In a neighborhood of  $x\in M$, where $\exp_{x}$ is a local diffeomorphism (this neighborhood always exists by Lemma \ref{eun}), a geodesic ball of radius $\delta>0$ is denoted by $\exp_{x}(B_{\delta}(0))\subset M$. The corresponding closed geodesic ball is denoted by $\exp_{x}(\overline{B}_{\delta}(0))$. 
 \end{definition}
 

 \begin{definition}\label{inj}The injectivity radius of $M$ is  
 \EQ \iota(M)\doteq \inf_{x\in M}\iota(x),\nnum\EN
 where
\EQ&&  \iota(x) \doteq \sup\{ r\in\R_{\ge0}| \exp_x \mbox{is diffeomorphic onto} \exp_x (B_r(0))\}.\nnum\EN

\end{definition}
\begin{definition}
The metric ball with respect to $d$ on $(M, g^{M})$ is defined by
\EQ B(x,r)\doteq \{ y\in M\hspace{.1cm}|\hspace{.2cm}d(x,y)<r\}.\nnum\EN
\end{definition}

The following lemma reveals a relationship between normal neighborhoods and metric balls on $(M,g^{M})$.
\begin{lemma}[ \hspace{-.1cm}\cite{Pet}, p. 122]
\label{lpp}
Given any $\eps\in\R_{>0}$ and $x\in M$, suppose that $\exp_x$ is a diffeomorphism  on $B_{\epsilon}(0)\subset T_{x}M$. If $B(x,r)\subset \exp_{x}B_{\epsilon}(0)$ for some $r\in \mathds{R}_{>0}$, then 
\EQ \exp_{x}B_{r}(0)=B(x,r).\nnum\EN
\hspace*{7.5cm}\end{lemma}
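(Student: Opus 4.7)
The plan is to prove the two set inclusions $\exp_x B_r(0) \subseteq B(x,r)$ and $B(x,r) \subseteq \exp_x B_r(0)$ separately. Observe first that the hypothesis $B(x,r)\subset \exp_x B_\epsilon(0)$ together with the diffeomorphism property on $B_\epsilon(0)$ implicitly forces $r \le \epsilon$, so that $B_r(0) \subseteq B_\epsilon(0)$ and $\exp_x$ acts diffeomorphically on every sub-ball of interest.

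For the first inclusion, I would fix $v \in B_r(0)$ and consider the radial geodesic $\gamma(t) \doteq \exp_x(tv)$ for $t\in[0,1]$. Since $tv\in B_\epsilon(0)$ for all such $t$, this curve is well-defined and belongs to $\mathscr{P}(x,\exp_x(v))$, with length equal to $\|v\|_{g^M}<r$. The definition of Riemannian distance in \er{dist} then yields $d(x,\exp_x(v))\le \|v\|_{g^M} < r$, so that $\exp_x(v) \in B(x,r)$.

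For the reverse inclusion, I would take $y\in B(x,r)$. The hypothesis $B(x,r)\subset\exp_x B_\epsilon(0)$ together with the injectivity of $\exp_x$ on $B_\epsilon(0)$ produces a unique $v\in B_\epsilon(0)$ with $y=\exp_x(v)$. The crucial step is to establish the identity $d(x,y)=\|v\|_{g^M}$, for then $\|v\|_{g^M}<r$ and hence $y\in\exp_x B_r(0)$.

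The main obstacle is precisely this identity, which is the classical statement that radial geodesics realize the Riemannian distance within a normal neighborhood. The standard tool is the Gauss Lemma: in geodesic polar coordinates around $x$, the radial vector field is orthogonal to the geodesic spheres. Using this, any admissible competitor $\tilde\gamma \in \mathscr{P}(x,y)$ that remains inside $\exp_x B_\epsilon(0)$ splits into a radial component whose integrated length is bounded below by $\|v\|_{g^M}$ and a tangential component contributing nonnegatively, so its total length is at least $\|v\|_{g^M}$. For competitors that leave $\exp_x B_\epsilon(0)$, the segment up to the first exit point already has length at least $\epsilon \ge r > d(x,y)$ by applying the previous argument to the exit point, so such paths are ruled out as distance-realizing. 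Taking the infimum over $\mathscr{P}(x,y)$ gives $d(x,y) \ge \|v\|_{g^M}$, and the radial geodesic $t\mapsto \exp_x(tv)$ supplies the matching upper bound $d(x,y)\le \|v\|_{g^M}$, completing the proof.
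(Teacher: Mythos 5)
The paper never proves this lemma itself --- it is quoted from Petersen (p.~122) as a citation --- so there is no internal proof to compare against. Your argument is, in substance, the standard textbook proof of the cited result: the inclusion $\exp_x B_r(0)\subseteq B(x,r)$ from the length of the radial geodesic, and the reverse inclusion from the identity $d(x,\exp_x v)=\|v\|_{g^M}$ in a normal ball, established via the Gauss lemma by splitting a competing curve into radial and spherical parts and discarding curves that leave $\exp_x B_\epsilon(0)$ by a first-crossing argument. That structure is sound and is exactly how the cited source argues.

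One step you should not wave through is the opening claim that the hypotheses ``implicitly force $r\le\epsilon$.'' For a general Riemannian manifold this is false: take $M$ to be the open Euclidean ball of radius $\epsilon$ with $x$ its center; then $\exp_x$ is a diffeomorphism on $B_\epsilon(0)$ and $B(x,r)=M=\exp_x B_\epsilon(0)$ for every $r>\epsilon$. Even in the complete/compact setting of this paper the claim needs a short argument (e.g.\ $\exp_x B_\epsilon(0)\subseteq B(x,\epsilon)$ by your radial-geodesic bound, $\exp_x B_\epsilon(0)\ne M$, and connectedness plus Hopf--Rinow to produce a point of $B(x,r)$ at distance at least $\epsilon$ from $x$, giving a contradiction if $r>\epsilon$). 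Better, you can avoid the claim entirely: the reverse inclusion never uses $r\le\epsilon$ (you only need $\|v\|_{g^M}=d(x,y)<r$), and the forward inclusion only needs $\exp_x$ to be defined on $B_r(0)$, which completeness supplies. Relatedly, in the ``curve exits the ball'' case it is cleaner to intersect the curve with a geodesic sphere $\exp_x(\partial B_\delta(0))$ for some $\|v\|_{g^M}<\delta<\epsilon$ (the topological boundary of $\exp_x B_\epsilon(0)$ need not coincide with $\exp_x(\partial B_\epsilon(0))$), which yields length at least $\delta>\|v\|_{g^M}$ --- all that the infimum argument requires.
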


We note that $B_{\epsilon}(0)$ is the metric ball of radius $\epsilon$ with respect to the Riemannian metric $g^{M}$ in $T_{x}M$.
This paper focuses on dynamical systems governed by differential equations on a connected $n$ dimensional Riemannian manifold $(M,g^{M})$. Locally these differential equations are defined by (see \cite{Lee2})
\EQ &&\hspace{-0cm}\dot{x}(t)=f(x(t),t),\hspace{.2cm} f\in \mathfrak{X}(M\times \mathds{R}),\nnum\\&&\hspace{-0cm}\hspace{.2cm} x(0)=x_{0}\in M, t\in[t_{0},t_{f}]\subset \mathds{R}.\nnum\EN
The time dependent flow associated with a differentiable time dependent vector field $f$ is a map $\Phi_{f}$ satisfying 
\EQ \label{flow} &&\Phi_{f}:[t_0, t_{f}]\times [t_{0}, t_{f}]\times M\rightarrow M, \nnum\\&& (s_{f},s_{0},x)\mapsto \Phi_{f}(s_{f},s_{0},x)\in M,\EN
and
\EQ \frac{d\Phi_{f}(s,s_{0},x)}{ds}|_{s=t}=f(\Phi_{f}(t,s_{0},x),t).\nnum\EN 
One may show, for a smooth vector field $f$, that the integral flow $\Phi_{f}(s,t_{0},\cdot):M\rightarrow M$ is a local diffeomorphism , see \cite{Lee2}.

\subsection{Lie groups}
As is well-known a Lie group $(G,\star)$ is a Riemannian manifold equipped with  operations $g_{1}\star g_{2}$ and $g^{-1}$ which are smooth in their topologies ($\star$ is the group operation of $G$), see \cite{Varad, knap}. We recall that the Lie algebra $\mathcal{L}$ of a Lie group $G$ (see \cite{Lewis},\cite{ Varad}) is the tangent space at the identity element $e$ with the associated Lie bracket defined on the tangent space $\mathcal{L}\doteq T_e G$ of $G$. 
A vector field $X$ on $G$ is called \textit{left invariant } if
\EQ \forall g_{1},g_{2}\in G,\quad X(g_{1}\star g_{2})=T_{g_{2}}g_{1}X(g_{2}),\nnum\EN
where $g\star:G\rightarrow G,\hspace{.2cm}g\star(h)=g\star h,\hspace{.2cm} T_{g_{2}}g:T_{g_{2}}G\rightarrow T_{g\star g_{2}}G$ which immediately implies $X(g\star e)=X(g)=T_{e}L_{g}X(e)$.
For a left invariant vector field $X$, we define the \textit{exponential map} on Lie groups as follows:
\EQ\label{ex} \exp:\mathcal{L}\rightarrow G,\quad \exp(tX(e))\doteq\Phi(t,X),t\in \mathds{R},\EN

where $\Phi(t,X)$ is the solution of $\dot{g}(t)=X(g(t))$ with the boundary condition $g(0)=e$. It may be shown that the solution of $\dot{g}(t)=X(g(t))$ with initial condition $g_{0}\in G$ is given by $g_{0}\star \exp(tX(e))$  where $\star$ is the group operation of $G$, see \cite{Lewis}.
A Riemannian metric $g^{G}$ on a Lie group $G$ is left invariant if 
\EQ g^{G}_{g_{2}}(X(g_{2}),Y(g_{2}))=g^{G}_{g_{1}\star g_{2}}(T_{g_{2}}g_{1}X(g_{2}),T_{g_{2}}g_{1}Y(g_{2})), \hspace{.2cm}X,Y\in \mathfrak{X}(G).\nnum\EN
Analogous to left invariant metrics,  right invariant Riemannian metrics on $G$ are defined. A Riemannian metric which is both left and right invariant is called \textit{bi-invariant}. The Levi-Civita connection corresponding to a left invariant Riemannian metric $g^{G}$ is denoted by $\nabla^{\mathds{G}}$. It may be shown that the Levi-Civita connection of a left invariant metric is left invariant i.e. (see \cite{Lewis}) 
\EQ Tg\left(\nabla^{\mathds{G}}_{X}Y\right)=\nabla^{\mathds{G}}_{TgX}TgY.\nnum\EN
Note that the pushforward $Tg$ is not evaluated at any point, hence, $TgX$ is a well defined vector field on $G$. 

 The following lemma gives a relationship between the exponential maps  (\ref{ex}) and geodesics on Lie groups.
\begin{lemma}[\hspace{-.01cm}\cite{pen}]
\label{lg}
Assume $G$ is a Lie group which admits a bi-invariant Riemannian metric. Then, for a left invariant vector field $X$ on $G$ we have
\EQ \exp(tX(e))=\exp_{e}(t X),\nnum\EN
where $\exp(tX(e))$ is the exponential map (\ref{ex}) and $\exp_{e}(t X)$ is the geodesic emanating from $e$ by velocity $X(e)$. 
\end{lemma}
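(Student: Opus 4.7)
The plan is to show that $\gamma(t) \doteq \exp(tX(e))$, which by the definition \er{ex} is the integral curve of the left-invariant vector field $X$ starting at the identity $e$, is in fact a geodesic of the bi-invariant Riemannian metric $g^{G}$. Once this is established, the conclusion $\gamma(t) = \exp_e(tX)$ follows from uniqueness of the maximal geodesic determined by the initial data $(\gamma(0), \dot\gamma(0)) = (e, X(e))$ (Definition \ref{d1}). Since $\dot\gamma(t) = X(\gamma(t))$ by construction, the geodesic equation $\nabla^{\mathds{G}}_{\dot\gamma}\dot\gamma = 0$ reduces along $\gamma$ to the pointwise requirement $(\nabla^{\mathds{G}}_X X)(\gamma(t)) = 0$, so the whole task is to show $\nabla^{\mathds{G}}_X X = 0$.

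The key step is to establish the identity
\[
\nabla^{\mathds{G}}_X Y \;=\; \frac{1}{2}[X, Y]
\]
for any two left-invariant vector fields $X, Y$ on $G$, which then immediately yields $\nabla^{\mathds{G}}_X X = \frac{1}{2}[X, X] = 0$. To prove the identity I would invoke Koszul's formula for the Levi-Civita connection and combine it with two consequences of bi-invariance of $g^{G}$. First, for left-invariant $X, Y$ the function $p \mapsto g^{G}_p(X(p), Y(p))$ is constant on $G$, so every directional-derivative term in Koszul's formula vanishes. Second, bi-invariance means that conjugation by every $g \in G$ is a $g^{G}$-isometry; differentiating this condition once at $e$ gives invariance of $g^{G}_e$ under the adjoint representation, and differentiating a second time along a one-parameter subgroup produces the infinitesimal skew-symmetry relation $g^{G}([X, Y], Z) + g^{G}(Y, [X, Z]) = 0$ for all left-invariant $X, Y, Z$. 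Substituting these two facts into Koszul's formula collapses its six terms into $2\, g^{G}(\nabla^{\mathds{G}}_X Y, Z) = g^{G}([X, Y], Z)$ for every left-invariant $Z$, and non-degeneracy of $g^{G}_e$ together with left-invariance of both sides yields the identity.

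The main obstacle is the second observation above: one must carefully convert the finite, metric-level bi-invariance hypothesis into the infinitesimal skew-symmetry of the bracket, which requires two successive differentiations of the isometry condition and is precisely the step where the right-invariance assumption is used (a purely left-invariant metric does not suffice, and integral curves of left-invariant fields are not geodesics in general). The remaining steps — substituting into Koszul's formula, specializing to $Y = X$, and invoking uniqueness of geodesics from Definition \ref{d1} — are purely formal.
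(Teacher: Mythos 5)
Your proposal is correct. Note that the paper itself offers no proof of Lemma \ref{lg}: it is stated as a citation to \cite{pen}, and the only argument in the paper that touches it is the proof of Lemma \ref{lgg}, which simply invokes Lemma \ref{lg} together with left invariance of $\nabla^{\mathds{G}}$. Your argument supplies the standard justification for the cited fact: left invariance of $g^{G}$ kills the three derivative terms in Koszul's formula (since $g^{G}(X,Y)$ is constant for left-invariant $X,Y$), right invariance, differentiated through the isometry $c_{g}=L_{g}\circ R_{g^{-1}}$ and then along a one-parameter subgroup, yields the infinitesimal $\mathrm{ad}$-skew-symmetry $g^{G}([X,Y],Z)+g^{G}(Y,[X,Z])=0$, and the two facts collapse Koszul's formula to $2g^{G}(\nabla^{\mathds{G}}_{X}Y,Z)=g^{G}([X,Y],Z)$, hence $\nabla^{\mathds{G}}_{X}X=\tfrac{1}{2}[X,X]=0$; since $\exp(tX(e))$ is by (\ref{ex}) the integral curve of $X$ through $e$, it is a geodesic with initial data $(e,X(e))$, and uniqueness of the maximal geodesic in Definition \ref{d1} (together with the homogeneity rescaling implicit in writing $\exp_{e}(tX)$) gives the claimed identity. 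Your closing remark correctly isolates where bi-invariance, as opposed to mere left invariance, is indispensable, which is consistent with the paper's own caveat in Section \ref{s5} that on $SE(3)$ the group exponential does not coincide with geodesics.
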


Note that $\exp(tX(e))$  is the solution of the left invariant vector field $X$ on $G$, whereas $\exp_{e}(t X)$ is the solution of (\ref{geo}) with the initial conditions $\exp_{e}(0X)=e$, $\frac{d}{dt}\exp_{e}(t X)|_{t=0}=X(e)$. 

   \section{Optimization on Manifolds and Extremum Seeking}
\label{s3}
Let us consider the optimization of a smooth function $J:M\rightarrow \mathds{R}_{\geq 0}$, where $(M,g^{M})$ is an $n$ dimensional smooth Riemannian manifold. Extremum seeking algorithms are a class of online optimization methods developed for minimizing/maximizing  smooth functions defined on Euclidean spaces. These methods can be applied to both static and dynamic functions. In this paper we restrict our analysis to static  functions defined on Riemannian manifolds. An extremum seeking closed loop for $M=\mathds{R}$ is shown in Figure \ref{12}. This is the simplest form of the extremum seeking algorithm to minimize/maximize a scalar function $J:\mathds{R}\rightarrow \mathds{R}_{\geq 0}$, see \cite{tan}. 
\begin{figure}
\begin{center}
\vspace*{-.4cm}
\hspace*{-1cm}
\includegraphics[scale=.3]{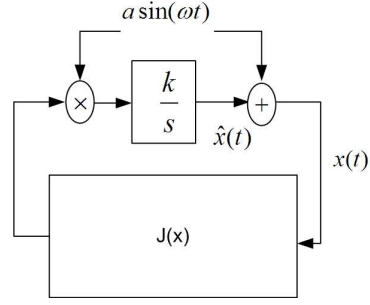}
 \caption{  Extremum seeking closed loop system for extremizing $J:M\rightarrow\R_{\ge 0}$, $M=\R$.}
      \label{12}
      \end{center}
   \end{figure} 
   The dither signal $a\sin(\omega t)$ provides a variation of the searching signal $\hat{x}(t)$ in the one dimensional space $\mathds{R}$. 
  The output $x(t)\in\R$ of the extremum seeking controller at time $t$ is $x(t) = \hat x(t) + a\, \sin(\omega
\, t)$, where $\hat x(t)\in\R$ is the corresponding output of the integrator shown. The closed loop dynamics in $\hat{x}$ coordinates are described by
  \EQ\label{kooni} \dot{\hat{x}}(t)&=&k a\sin(\omega t)J(\hat{x}(t)+a\sin(\omega t)).\EN
   The next lemma shows that, on average, the extremum seeking scheme of Figure \ref{12} is a perturbation of the gradient algorithm. 
  \begin{lemma}[\hspace{-.001cm}\cite{tan}]
  \label{l1}
  Consider the extremum seeking scheme in Figure \ref{12}. 
  Then, on average, the closed loop of the extremum seeking algorithm in Figure \ref{12} is a perturbation of the gradient algorithm in $\hat{x}$ coordinates.
  \end{lemma}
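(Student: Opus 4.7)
The plan is to apply the classical Taylor-expansion-plus-time-averaging recipe to the closed-loop equation (\ref{kooni}), following the template in \cite{krs1,tan}. Since $J\in C^{\infty}(\R)$ and the dither amplitude $a$ is small, the natural move is to expand $J(\hat x + a\sin(\omega t))$ about $\hat x$:
\begin{equation*}
J(\hat x + a\sin(\omega t)) = J(\hat x) + a\sin(\omega t)\, J'(\hat x) + \tfrac{1}{2} a^{2}\sin^{2}(\omega t)\, J''(\hat x) + O(a^{3}).
\end{equation*}
Substituting into (\ref{kooni}) and multiplying by $k\,a\sin(\omega t)$ yields
\begin{equation*}
\dot{\hat x}(t) = k a\sin(\omega t)\, J(\hat x) + k a^{2}\sin^{2}(\omega t)\, J'(\hat x) + O(a^{3}).
\end{equation*}

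The next step is to compute the time average of the right-hand side over one dither period $T = 2\pi/\omega$, treating $\hat x$ as frozen (this is precisely the setting in which the averaging theorem applies, since the vector field is $T$-periodic in $t$ and smooth in $\hat x$). Using the elementary identities $\frac{1}{T}\int_{0}^{T}\sin(\omega t)\,dt = 0$ and $\frac{1}{T}\int_{0}^{T}\sin^{2}(\omega t)\,dt = \tfrac{1}{2}$, one obtains the averaged system
\begin{equation*}
\dot{\hat x}_{\mathrm{av}}(t) = \tfrac{1}{2} k a^{2}\, J'(\hat x_{\mathrm{av}}(t)) + O(a^{3}),
\end{equation*}
which is exactly a gradient flow in the $\hat x$ coordinate (with effective gain $\tfrac{1}{2} k a^{2}$) plus a perturbation of order $a^{3}$. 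This is what is meant by ``on average, a perturbation of the gradient algorithm'' in the lemma statement.

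The only nontrivial point is justifying the passage from the original time-varying dynamics to the averaged dynamics; this is handled by invoking the standard averaging theorem for periodic vector fields (e.g.\ Khalil's averaging theorem), which guarantees that on any compact time interval, solutions $\hat x(t)$ and $\hat x_{\mathrm{av}}(t)$ starting from the same initial condition remain $O(1/\omega)$-close provided $\omega$ is sufficiently large. I expect the main obstacle not to be the computation itself but rather making transparent what ``perturbation'' is being referred to, namely the combined $O(a^{3})$ contribution from the Taylor remainder and the $O(1/\omega)$ contribution from averaging; both tend to zero as $a\to 0$ and $\omega\to\infty$. Since the lemma is stated qualitatively and the paper's subsequent geometric extension will redo this computation intrinsically on $(M,g^{M})$, a short Taylor-expansion-and-average argument as above suffices.
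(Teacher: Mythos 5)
Your proposal is correct and follows essentially the same route as the paper: Taylor-expand $J$ about $\hat x$, substitute into (\ref{kooni}), and average over one dither period using $\frac{1}{T}\int_0^T\sin(\omega t)\,dt=0$ and $\frac{1}{T}\int_0^T\sin^2(\omega t)\,dt=\tfrac12$, yielding the scaled gradient flow $\tfrac{k a^2}{2}J'(\hat x)$ plus a small remainder. The only cosmetic difference is bookkeeping of the remainder: since the odd-order term $a^3\sin^3(\omega t)$ averages to zero, the paper records the perturbation as $O(a^4)$ rather than your (still valid, but less sharp) $O(a^3)$.
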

	
  The proof is based on the Taylor expansion of the cost function $J$ in $\hat{x}$ coordinates. By fixing $\hat{x}(t)$ to a dummy variable $z$, we have
  \EQ && J(z+a\sin(\omega t))= J(z)+\frac{\partial J}{\partial \hat{x}}|_{\hat{x}=z}a\sin(\omega t)+O(a^{2}).\nnum\EN
 Hence, the dynamical equations in $\hat{x}$ coordinates are given by
  \EQ\label{kir2}\dot{\hat{x}}(t)&=&ka\sin(\omega t)\Big(J(\hat{x})+\frac{\partial J}{\partial \hat{x}}|_{\hat{x}=\hat{x}(t)}a\sin(\omega t)+O(a^{2})\Big). \EN
  The dynamical equation (\ref{kir2}) is a periodic time-varying system where one may apply the averaging techniques, see \cite{Kha}. In particular, the averaged system is given by
  \EQ\label{kos} \dot{\hat{x}}(t)&=&\frac{1}{T}\int^{T}_{0}ka\sin(\omega t)\Big(J(\hat{x})+\frac{\partial J}{\partial \hat{x}}|_{\hat{x}=\hat{x}(t)}a\sin(\omega t)+O(a^{2})\Big)dt\nnum\\&=&\frac{ka^{2}}{2}\frac{\partial J}{\partial \hat{x}}|_{\hat{x}=\hat{x}(t)}+O(a^{4}),\EN
  where $T$ is the period of $\sin(\omega t)$. Obviously (\ref{kos}) is in a perturbation form of the gradient algorithm in $\mathds{R}$. 
  The results of Lemma \ref{l1} are of great importance since convergence of the averaged dynamical system in (\ref{kos}) to a neighborhood of an equilibrium (local minimum or maximum) is required in order to guarantee the closeness of solutions of the time varying system (\ref{kir2}) to a local optimizer, see \cite{krs1}. Here, we extend the framework above for optimization of cost functions defined on finite dimensional Riemannian manifolds. The main challenge is to introduce a class of dither signals which perturb the optimizer $\hat{x}$ without violating the restrictions imposed by the ambient manifolds. This is done by employing the so-called \textit{geodesic dithers} as follows.

	Consider an $n$ dimensional Riemannian manifold $(M,g^{M})$. For any $x\in M$, we consider the following local time-varying perturbation
  \EQ \label{gd}x_{p}(t)=\exp_{x}\left(\sum^{n}_{i=1}a_{i}\sin(\omega_{i}t)\frac{\partial}{\partial x_{i}}\right),\hspace{.2cm}0<a_{i},\EN 
  where $\{\frac{\partial}{\partial x_{i}}\},\hspace{.2cm}i=1,\cdots,n$, is the basis for the tangent space at $x$. As formalized in Definition \ref{d1}, $\exp_{x}v$ is a geodesic emanating from $x\in M$ with velocity $v\in T_{x}M$. In this case we perturb the $n$ different coordinates on $M$ with different frequencies $\omega_{i},\hspace{.2cm}i=1,\cdots,n$.
  As an example, for a one dimensional Riemannian manifold $S^{1}$, Figure \ref{13} shows the example of a geodesic dither $\exp_{x}a\sin(\omega t)\frac{\partial}{\partial \theta}$ at $x\in S^{1}$ where a local coordinate system corresponding to $S^{1}$ is given by (for the definition of coordinate systems see \cite{Lee4})
\EQ \psi:\theta\rightarrow (\sin(\theta),\cos(\theta))\in \mathds{R}^{2}, \hspace{.2cm}\theta\in (0,2\pi)\subset \mathds{R}.\nnum\EN 
\begin{figure}
  \vspace*{0cm}\begin{center}
\hspace*{-.25cm}\includegraphics[scale=.35]{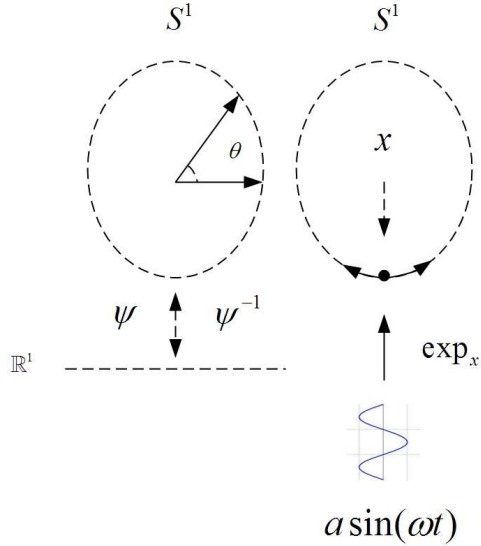}
 \caption{Geodesic dither at $x$ on $S^{1}$.}
     \label{13}
     \end{center}
  \end{figure}
Motivated by the classical extremum seeking closed loop of Figure \ref{12}, we present a  time-varying extremum seeking  vector field $f(\hat{x},t)\in T_{\hat{x}}M$ for optimization on $(M,g^{M})$ which is locally given by
  \EQ \label{kk}&&\hspace{-.5cm}f(\hat{x},t)\doteq k \sum^{n}_{i=1}a_{i}\sin(\omega_{i}t)J\left(\exp_{\hat{x}}\sum^{n}_{j=1}a_{j}\sin(\omega_{j}t)\frac{\partial}{\partial x_{j}}\right)\frac{\partial}{\partial x_{i}}.\EN
	In this paper we assume that the optimization problem is to minimize a cost function, hence, following (\ref{kos}), without loss of generality assume $k=-1$. 
  Finally, the optimizing trajectory $\hat{x}(\cdot)$ is a solution of the time dependent differential equation 
  \EQ \label{kkir}\dot{\hat{x}}(t)=f(\hat{x}(t),t)\in T_{\hat{x}(t)}M.\EN
	The closed loop system (\ref{kkir}) is called the \textit{extremum seeking system} on $(M,g^{M})$.
	Note that $t$ appears as a parameter in $\exp_{\hat{x}}\left(\sum^{n}_{i=1}a_{i}\sin(\omega_{i}t)\frac{\partial}{\partial x_{i}}\right)$. That is to say 
	\EQ \exp_{\hat{x}}\left(\sum^{n}_{i=1}a_{i}\sin(\omega_{i}t)\frac{\partial}{\partial x_{i}}\right)=\exp_{\hat{x}}\left(\eta\sum^{n}_{i=1}a_{i}\sin(\omega_{i}t)\frac{\partial}{\partial x_{i}}\right)\left|_{\eta=1},\right.\nnum\EN
	where $\eta$ and $t$ are independent. Also note that the optimization algorithm (\ref{kk}) does not require any information about the gradient of $J$. However, cost function $J$ should be measurable for the optimizing algorithm. 
	
  The next lemma proves that on compact Riemannian manifolds one may choose parameters $a_{i}>0$ sufficiently small such that for all $x\in M$ we have\\ $\exp_{x}\left(\sum^{n}_{i=1}a_{i}\sin(\omega_{i}t)\frac{\partial}{\partial x_{i}}\right)\in \iota(x)$. These results will be employed to obtain the Taylor expansion of cost functions on Riemannian manifolds along geodesics. 
  \begin{lemma}
  \label{ll1}
  Consider the geodesic dither introduced by (\ref{gd}) on a smooth $n$ dimensional compact Riemannian manifold $(M,g^{M})$. Then for all $x\in M$, we may select $a_{i}>0,\hspace{.2cm}i=1,\cdots,n$, such that for all $t\in \mathds{R}$, $\exp_{x}\left(\sum^{n}_{i=1}a_{i}\sin(\omega_{i}t)\frac{\partial}{\partial x_{i}}\right)\in \iota(x)$.
  \end{lemma}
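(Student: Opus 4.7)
The plan is to reduce the conclusion to a norm estimate on the dither vector. Writing
\EQQ
v(x,t) \doteq \sum_{i=1}^{n} a_{i}\sin(\omega_{i}t)\frac{\partial}{\partial x_{i}} \in T_{x}M,
\ENN
Lemma \ref{eun} guarantees that $\exp_{x}$ is a diffeomorphism on $B_{\iota(x)}(0) \subset T_{x}M$, so it suffices to choose the $a_{i} > 0$ so small that $\|v(x,t)\|_{g^{M}} < \iota(x)$ for every $t\in\R$, uniformly in $x\in M$.

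The first key ingredient is a uniform positive lower bound on the injectivity radius. Since $M$ is compact (hence complete) and the map $x\mapsto\iota(x)$ is continuous on a complete Riemannian manifold, the extreme value theorem gives $\iota(M)=\inf_{x\in M}\iota(x)>0$. This reduces the problem to arranging $\|v(x,t)\|_{g^{M}} < \iota(M)$ uniformly.

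Next I would control the dither norm via the metric components. Using bilinearity of $g^{M}$ and $|\sin(\omega_{i}t)| \leq 1$,
\EQQ
\|v(x,t)\|_{g^{M}}^{2} = \sum_{i,j=1}^{n} g_{ij}(x)\, a_{i}a_{j}\sin(\omega_{i}t)\sin(\omega_{j}t) \leq \sum_{i,j=1}^{n} |g_{ij}(x)|\, a_{i}a_{j}.
\ENN
By compactness, $M$ admits a finite atlas; on the closure of each chart the smooth components $g_{ij}$ are bounded, so there exists a common constant $C>0$ with $|g_{ij}(x)| \leq C$ for all $x\in M$ and all $i,j$. Consequently $\|v(x,t)\|_{g^{M}} \leq n\sqrt{C}\,\max_{i} a_{i}$, and picking any $a_{i} \in (0,\,\iota(M)/(n\sqrt{C}))$ forces $v(x,t) \in B_{\iota(M)}(0) \subseteq B_{\iota(x)}(0)$ as required; the conclusion follows from Lemma \ref{eun}.

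The only real subtlety is that the coordinate frame $\{\partial/\partial x_{i}\}$ is chart-dependent, so the "same" scalars $a_{i}$ refer to different tangent vectors at different base points. This is handled by choosing a finite atlas, establishing the bound $C$ chart-by-chart, and taking the worst case; alternatively one may fix a continuous (local) orthonormal frame on each chart, in which case $C=1$ and the estimate is immediate. The other ingredient that is standard but nontrivial, and where I expect most readers to want a citation, is the continuity of $\iota(\cdot)$ on a complete Riemannian manifold, which is what turns the pointwise positivity supplied by Lemma \ref{eun} into the uniform positivity $\iota(M) > 0$ used above.
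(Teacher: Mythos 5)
Your proposal is correct and follows essentially the same route as the paper's proof: a uniform positive lower bound on the injectivity radius from compactness, a compactness bound on the metric components $g^{M}\left(\frac{\partial}{\partial x_{i}},\frac{\partial}{\partial x_{j}}\right)$ to control $\left\|\sum_{i}a_{i}\sin(\omega_{i}t)\frac{\partial}{\partial x_{i}}\right\|_{g^{M}}$, and then shrinking the $a_{i}$. Your explicit treatment of the chart-dependence of the frame (finite atlas, worst-case constant) is a small refinement of a point the paper passes over, but it does not change the argument.
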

  \begin{proof}
  Since $M$ is compact and smooth  then $\iota(M)$ is bounded from below, see \cite{Klin}. Hence, for all $x\in M$ there exists $\kappa\in \mathds{R}_{>0}$ such that $\kappa<\iota(x)$. The Riemannian norm of the dither signal is given by 
  \EQ &&\left\|\sum^{n}_{i=1}a_{i}\sin(\omega_{i}t)\frac{\partial}{\partial x_{i}}\right\|^{2}_{g^{M}}=
  \sum^{n}_{i,j=1}a_{i}a_{j}\sin(\omega_{i}t)\sin(\omega_{j}t)g^{M}\left(\frac{\partial}{\partial x_{i}},\frac{\partial}{\partial x_{j}}\right).\nnum\EN
  Since $M$ is compact, $\sum^{n}_{i,j=1}g^{M}(\frac{\partial}{\partial x_{i}},\frac{\partial}{\partial x_{j}})$ attains its maximum on $M$. Hence, $a_{i}>0,\hspace{.2cm}i=1,\cdots,n$, may be selected sufficiently small such that $||\sum^{n}_{i=1}a_{i}\sin(\omega_{i}t)\frac{\partial}{\partial x_{i}}||^{2}_{g^{M}}\leq \kappa^{2},\hspace{.2cm}\forall t\in \mathds{R},$ which completes the proof.
   \end{proof}
	
  We adopt the following assumption for the cost function $J$ on $(M,g^{M})$ and the dither frequencies $\omega_{i},\hspace{.2cm}i=1,\cdots,n$. This assumption is compatible with the main assumption on dither frequencies in \cite{fri} for multi-agent extremum seeking algorithms. 
  \newtheorem{assumption}{Assumption}
  \begin{assumption}
  \label{as}
  Cost function $J:M\rightarrow \mathds{R}_{\geq 0}$ is smooth and locally positive definite in a neighborhood of a unique local minimum $x^{*}\in M$, where $J(x^{*})=0$. The dither frequencies are $\omega_{i}=\omega\bar{\omega}_{i}$, where $\bar{\omega}_{i}$ is rational,  $\bar{\omega}_{i}\ne\bar{\omega}_{j}$, $2\bar{\omega}_{i}\ne \bar{\omega}_{j},j\ne i$ and $\bar{\omega}_{i}\ne \bar{\omega}_{j}+\bar{\omega}_{k}$ for distinct $i,j,k$, where $\omega_{i},\omega\in \mathds{R}_{>0}, i,j\in 1,\cdots,n$. 
  \end{assumption}
	
	Here we introduce the \textit{gradient and average systems} which correspond to the extremum seeking vector field (\ref{kk}) on $(M,g^{M})$. For the smooth function $J:M\rightarrow \mathds{R}_{\geq 0}$ the gradient system is defined by
	\EQ \label{ggs}\dot{x}(t)=-\sum^{n}_{i=1}\nabla_{\frac{\partial}{\partial x_{i}}}J(x(t))\frac{\partial}{\partial x_{i}},\EN
	where the set $\{\frac{\partial}{\partial x_{i}}, i=1,\cdots,n\}$ is a basis of $T_{x}M$.
	We note that the formal definition of the gradient $grad J$ of $J$ is given by \cite{jost} as 
	\EQ \label{kosak}dJ(X)=g^{M}(\mbox{grad}J,X),\hspace{.2cm}X\in \mathfrak{X}(M),\EN
	where $dJ$ is the one form differential of $J$  locally given by $dJ=\sum^{n}_{i=1}\frac{\partial J}{\partial x_{i}}dx_{i}\in T^{*}_{x}M$. 
	Note that the existence of $\mbox{grad}J$ in (\ref{kosak}) is implied by an application of Riesz representation theorem since $dJ$ belongs to the dual space $T^{*}_{x}M$ and $g^{M}(\cdot,\cdot)$ defines an inner product on $T_{x}M$, see \cite{rudin}. 
	In this case 
	\EQ \mbox{grad}J(x)=\sum^{n}_{i,j=1}g^{ij}(x)\nabla_{\frac{\partial}{\partial x_{i}}}J(x)\frac{\partial}{\partial x_{j}},\nnum\EN
	where $[g^{ij}]=[g_{ij}]^{-1}$. Hence, the formal gradient system corresponding to $J$ is $\label{gs}\dot{x}(t)=-\sum^{n}_{i=1}g^{ij}(x(t))\nabla_{\frac{\partial}{\partial x_{i}}}J(x(t))\frac{\partial}{\partial x_{i}}$. However, in this paper, we adopt the terminology that the gradient system of $J$ refers to (\ref{ggs}). The \textit{scaled} version of the gradient system (\ref{ggs}) is given as 
	\EQ\label{sc} \dot{x}(t)= \acute{f}(x(t))\doteq-\sum^{n}_{i=1}\frac{a^{2}_{i}}{2}\nabla_{\frac{\partial}{\partial x_{j}}}J(x(t))\frac{\partial}{\partial x_{i}}, a_{i}>0.\EN
	With no further confusion we refer the scaled gradient system as gradient system. 
	For the periodic time varying vector field $f(x,t)$ in (\ref{kk}), the averaged dynamical system is defined as follows
	\EQ \label{av}\dot{x}(t)= \hat{f}(x(t))\doteq\frac{1}{T}\int^{T}_{0}f(x(t),\tau)d\tau,\EN
	where $T$ is the period of $f$, i.e. $f(x,t)=f(x,t+T)$.
	
	The following lemma proves that, on average, the closed loop of the extremum seeking system  (\ref{kkir}) is a perturbation of  the gradient system of the cost function $J$.
	\begin{lemma}
	\label{kir}
	Consider the extremum seeking system in (\ref{kkir}) on a compact Riemannian manifold $(M,g^{M})$ where the optimizing trajectory is perturbed by the geodesic dither presented in (\ref{gd}). Then, subject to Assumption \ref{as}, the averaged dynamical system of (\ref{kk}) is a perturbation of the gradient system (\ref{sc}) of the cost function $J$.
	\end{lemma}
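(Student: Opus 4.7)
The plan is to substitute the geodesic dither into $J$, Taylor-expand $J$ along the geodesic in a local chart, and then average the resulting time-dependent vector field over one common period. First, invoking Lemma \ref{ll1}, I would choose $a_i > 0$ sufficiently small so that for every $t\in\mathds{R}$ the point $\exp_{\hat{x}}(\sum_j a_j \sin(\omega_j t)\frac{\partial}{\partial x_j})$ lies inside a normal neighborhood of $\hat{x}$; this makes the geodesic, and hence the local Taylor expansion of $J$ along it, well defined. Using that the covariant derivative of a scalar function coincides with the ordinary directional derivative, and expanding $s \mapsto J(\exp_{\hat{x}}(s v))$ to first order at $s=0$ with $v(t) = \sum_j a_j \sin(\omega_j t)\frac{\partial}{\partial x_j}$, I obtain an expression of the form
\begin{equation*}
J\Bigl(\exp_{\hat{x}}\sum_{j} a_j\sin(\omega_j t)\tfrac{\partial}{\partial x_j}\Bigr) = J(\hat{x}) + \sum_j a_j\sin(\omega_j t)\,\nabla_{\partial/\partial x_j}J(\hat{x}) + R(\hat{x},t),
\end{equation*}
where the remainder $R$ is quadratic in the $a_i$ and, by compactness of $M$, bounded uniformly in $\hat{x}$ and $t$.

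Next, I would substitute this expansion into the definition (\ref{kk}) of $f(\hat{x},t)$ and average over the common period $T=2\pi/\omega$ that is guaranteed by the rationality part of Assumption \ref{as}. The zeroth-order piece $-a_i \sin(\omega_i t)J(\hat{x})$ averages to zero since $\int_0^T \sin(\omega_i t)\,dt = 0$. The first-order piece contributes, for each $i$,
\begin{equation*}
-\sum_{j=1}^n \frac{a_i a_j}{T}\int_0^T \sin(\omega_i t)\sin(\omega_j t)\,dt\;\nabla_{\partial/\partial x_j}J(\hat{x})\;\tfrac{\partial}{\partial x_i},
\end{equation*}
and the condition $\bar{\omega}_i \neq \bar{\omega}_j$ for $i\neq j$ in Assumption \ref{as} kills every off-diagonal integral while the diagonal integral equals $T/2$. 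Summing over $i$ reproduces precisely the scaled gradient vector field $\acute{f}(\hat{x})$ of (\ref{sc}).

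Finally, I would show that the remainder $R$ contributes only a higher-order perturbation after averaging. Writing $R$ schematically as a combination of bilinear terms $a_j a_k \sin(\omega_j t)\sin(\omega_k t)$ with smooth coefficients, the averaged integrand involves cubic products $\sin(\omega_i t)\sin(\omega_j t)\sin(\omega_k t)$. By the remaining conditions $2\bar{\omega}_i\neq\bar{\omega}_j$ and $\bar{\omega}_i\neq\bar{\omega}_j+\bar{\omega}_k$ of Assumption \ref{as}, every such average vanishes, so the leading surviving correction is of order $a^4$. This yields $\hat{f}(\hat{x}) = \acute{f}(\hat{x}) + O(a^4)$, exhibiting the averaged system as a perturbation of (\ref{sc}).

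The main obstacle I expect is handling the second-order term of the Taylor expansion of $J$ along a geodesic intrinsically, since in a coordinate chart the ordinary second partials $\partial^2 J/\partial x_j\partial x_k$ are not tensorial; one should instead express the remainder through the Hessian $\nabla dJ$, with Christoffel-symbol corrections as in (\ref{cris}). Fortunately, for the conclusion of this lemma only a uniform bound on $R$ is needed, which compactness of $M$ supplies directly, so no intrinsic identification of the quadratic coefficient is required and the argument runs in essentially the same way as the Euclidean computation behind Lemma \ref{l1}.
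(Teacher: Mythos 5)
Your proposal follows essentially the same route as the paper's proof in Appendix \ref{A0}: Taylor expansion of $J$ along the geodesic dither (justified by Lemma \ref{ll1} and compactness), substitution into (\ref{kk}), averaging over the least common period, with the frequency conditions of Assumption \ref{as} annihilating the zeroth-order term and the off-diagonal quadratic terms, so that the diagonal terms reproduce the scaled gradient field (\ref{sc}) and the rest is a higher-order perturbation bounded via compactness of $M$.

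One step is looser than it should be: the treatment of the remainder $R$. In its integral form the quadratic remainder is $\int_0^1(1-s)\,\nabla^2_{v(t)}J(\exp_{\hat x}(s\,v(t)))\,ds$ with $v(t)=\sum_j a_j\sin(\omega_j t)\frac{\partial}{\partial x_j}$, i.e.\ the Hessian is evaluated at the moving point $\exp_{\hat x}(s\,v(t))$, so $R$ is \emph{not} literally a combination of terms $a_ja_k\sin(\omega_jt)\sin(\omega_kt)$ with $t$-independent coefficients, and the triple-sine cancellation cannot be applied to it as written. To obtain the $O((\max_i a_i)^4)$ estimate you claim (and which the paper quotes in the proof of Theorem \ref{t1}), you must either expand one order further --- keep the explicit second-order covariant term $\frac{1}{2}\nabla^2_{v(t)}J(\hat x)$ evaluated at the base point, whose average against $a_i\sin(\omega_it)$ vanishes by the conditions $2\bar\omega_i\ne\bar\omega_j$ and $\bar\omega_i\ne\bar\omega_j+\bar\omega_k$, pushing the integral remainder to cubic order in the $a_i$ --- or observe that freezing the Hessian at $\hat x$ costs only an $O(\max_i a_i)$ error by smoothness and compactness; this is exactly what the paper's higher-order expansion accomplishes. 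Relatedly, your closing remark that ``only a uniform bound on $R$ is needed'' is inconsistent with your $O(a^4)$ claim: a uniform $O(a^2)$ bound on $R$ yields only an $O(a^3)$ perturbation after multiplication by $a_i\sin(\omega_it)$. That still proves the lemma as literally stated (the averaged system is a perturbation of (\ref{sc})), but it is weaker than the paper's estimate; the fix above is routine.
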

	\begin{proof}
See Appendix \ref{A0}.
	\end{proof}
	
	The results of Lemma \ref{kir} imply that the state trajectories of the averaged dynamical system (\ref{av}) can be estimated by the state trajectories of  the scaled gradient system (\ref{sc}). In the case $a_{i}=a_{j},\hspace{.2cm}i,j=1,\cdots,n$ then $\dot{x}(t)=-\sum^{n}_{i=1}\frac{a^{2}_{i}}{2}\nabla_{\frac{\partial}{\partial x_{j}}}J(x(t))\frac{\partial}{\partial x_{i}}$ is identical to (\ref{ggs}).
	\newtheorem{remark}{Remark}
	\begin{remark}
	\label{rm}
	Note that the compactness of $M$ can be relaxed when the analysis is carried out in a local neighborhood of $x^{*}$ which is contained in a compact set. The existence of this compact set is guaranteed since manifolds are Housdorff spaces and Housdorff spaces are locally compact, see \cite{Lee4}, Proposition 4.27.
	
	\end{remark}
	We analyze the properties of the state trajectory of (\ref{kk}) based on the state trajectory of the average system (\ref{av}). Also stability properties of the gradient system $\dot{x}(t)=-\sum^{n}_{i=1}\frac{a^{2}_{i}}{2}\nabla_{\frac{\partial}{\partial x_{j}}}J(x(t))\frac{\partial}{\partial x_{i}}$ facilitate the closeness of solutions between the time varying dynamical systems (\ref{kk}) and the gradient system (\ref{sc}). The same results on Euclidean spaces are presented in \cite{sand}.  The following lemma gives the uniform local asymptotic stability of the gradient system obtained in the proof of Lemma \ref{kir}.
	
	\begin{lemma}
	\label{kkir2}
	Consider the gradient dynamical system  (\ref{sc}) on a compact connected $n$ dimensional Riemannian manifold $(M,g^{M})$. Then, subject to Assumption \ref{as}, the cost function $J$ is a Lyapunov function and  $x^{*} $  is locally asymptotically stable (see \cite{Kha}) for the gradient system (\ref{sc})  on $(M,g^{M})$ 
	\end{lemma}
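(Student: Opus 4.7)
The plan is to show that $J$ itself serves as a Lyapunov function for the gradient system (\ref{sc}) and then invoke the Riemannian version of Lyapunov's direct theorem; the argument is essentially the manifold analogue of the standard Euclidean identity $\dot J=-\|\mathrm{grad}\,J\|^{2}$ along a gradient flow.

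First, I would fix a coordinate chart $\{\partial/\partial x_{i}\}$ around $x^{*}$ (for instance a normal chart on the exponential image guaranteed by Lemma \ref{eun}) and, invoking Remark \ref{rm} to localize the analysis if needed, restrict attention to a compact geodesic ball $\exp_{x^{*}}(\overline{B}_{\delta}(0))$. By Assumption \ref{as}, after shrinking $\delta$ if necessary, $J$ is positive definite on this ball: $J(x^{*})=0$ and $J(x)>0$ for every other $x$ in the ball. This supplies the Lyapunov function candidate.

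Second, I would compute the derivative of $J$ along a trajectory $x(t)$ of (\ref{sc}). Since for a scalar function the Levi-Civita covariant derivative coincides with the directional derivative, $\nabla_{\partial/\partial x_{i}}J=\partial J/\partial x_{i}$, so the coordinate components of $\dot x(t)=\acute f(x(t))$ are $-(a_{i}^{2}/2)\,\partial J/\partial x_{i}$. Pairing with $dJ\in T^{*}_{x}M$ yields
\[
\frac{d}{dt}J(x(t))=dJ_{x(t)}(\dot x(t))=-\sum_{i=1}^{n}\frac{a_{i}^{2}}{2}\left(\frac{\partial J}{\partial x_{i}}\right)^{2}\le 0,
\]
with equality if and only if every $\partial J/\partial x_{i}$ vanishes at $x(t)$, i.e.\ $x(t)$ is a critical point of $J$.

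Third, I would close the stability argument. Positive definiteness of $J$ together with $\dot J\le 0$ delivers Lyapunov stability of $x^{*}$ through the standard sublevel-set argument: trajectories stay inside the sublevel sets $\{J\le c\}$, which can be made arbitrarily small neighborhoods of $x^{*}$ by shrinking $c$. For \emph{local asymptotic} stability one needs the stronger fact that $\dot J<0$ on a punctured neighborhood of $x^{*}$, equivalently that $x^{*}$ is the unique critical point of $J$ on the chosen geodesic ball. This is the main delicate point: Assumption \ref{as} asserts uniqueness of $x^{*}$ as a local minimum but does not literally rule out other non-minimal critical points accumulating at $x^{*}$. I would address this either by reading ``locally positive definite'' as implying nondegeneracy of the Hessian $\nabla^{2}J(x^{*})$, so that $x^{*}$ is isolated in $\mathrm{Crit}(J)$ via a Morse-type argument, or more cleanly by invoking LaSalle's invariance principle on the compact sublevel sets inside the geodesic ball: every trajectory approaches the largest invariant subset of $\{\dot J=0\}\subset \mathrm{Crit}(J)$, and local positive definiteness combined with the monotone decrease of $J$ forces that set to collapse to $\{x^{*}\}$, giving local asymptotic stability.
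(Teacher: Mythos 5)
Your argument is essentially the paper's own proof: take $J$ itself as the Lyapunov candidate, compute $\mathcal{L}_{\acute{f}}J=dJ(\acute{f})=-\sum_{i=1}^{n}\frac{a_{i}^{2}}{2}\left(\frac{\partial J}{\partial x_{i}}\right)^{2}\leq 0$, and conclude local asymptotic stability from a Lyapunov theorem on manifolds (the paper cites \cite{Lewis}, Theorem 6.14). The paper settles your ``delicate point'' simply by reading Assumption \ref{as} as guaranteeing $\sum_{i}(\partial J/\partial x_{i})^{2}\neq 0$ locally for $x\neq x^{*}$ (your first option); note that your LaSalle fallback alone would not close that gap, since local positive definiteness of $J$ plus monotone decrease does not by itself exclude other critical points (hence equilibria of the gradient flow) accumulating at $x^{*}$.
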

	\begin{proof}
See Appendix \ref{A00}.
	\end{proof}

  The following theorem is the main result of this paper which gives a local convergence of the geodesic extremum seeking system  to a unique local minimum/maximum of the function $J$ on an $n$ dimensional compact Rimeannian manifold $(M,g^{M})$. The compactness assumption can be relaxed if the analysis is restricted to a local neighborhood of the optimizer $x^{*}$ which is contained in a compact set as per Remark \ref{rm}.
  \begin{theorem}
	\label{t1}
  Consider the geodesic extremum seeking system (\ref{kkir}) on a compact connected $n$ dimensional Riemannian manifold $(M,g^{M})$, where $\omega_{i}=\omega\bar{\omega}_{i}, i=1,\cdots,n$ satisfy Assumption \ref{as}. Assume $x^{*}\in M$ is a unique local optimizer of $J:M\rightarrow \mathds{R}_{\geq 0}$, where $J$ satisfies Assumption \ref{as}. Then for any neighborhood $U_{x^{*}}\subset M$ of $x^{*}$ on $M$, there exist sufficiently small parameters $a_{i}>0,\hspace{.2cm}i=1,\cdots,n$, sufficiently large  frequency $\omega$ and a neighborhood of $x^{*}$ denoted by $\hat{U}_{x^{*}}\subset M$, such that for any $x_{0}\in \hat{U}_{x^{*}}$, the state trajectory of the closed loop system (\ref{kk}) initiating from $x_{0}$ ultimately enters and remains in $U_{x^{*}}$.
  \end{theorem}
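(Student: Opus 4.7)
The plan is to chain together three systems on $(M,g^{M})$: the time-varying extremum seeking system \er{kkir}, its averaged system \er{av}, and the scaled gradient system \er{sc}. By Lemma \ref{kkir2}, the optimizer $x^{*}$ is locally asymptotically stable for \er{sc} with $J$ serving as a Lyapunov function. I would first use this stability to produce, for the given target neighborhood $U_{x^{*}}$, a shrinking nest of sublevel sets of $J$: namely choose a geodesic ball $V \subset U_{x^{*}}$ contained in a normal neighborhood, then invoke Lyapunov stability to pick a smaller neighborhood $W \subset V$ such that every gradient trajectory from $W$ enters some even smaller sublevel set $W' \subset W$ by a finite time $T^{*}$ and remains there forever. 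This gives an attractor/trapping structure entirely in terms of the gradient flow.

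Next, I would transfer this structure to the averaged system. Lemma \ref{kir} already shows that $\hat{f}$ differs from $\acute{f}$ by a term of order $O(a^{4})$ in the $a_{i}$. Standard continuous dependence of flows on parameters, adapted to the Riemannian setting via local coordinate charts around the compact piece of gradient trajectory from $W$ to $W'$, then implies that for all sufficiently small $a_{i}$ the averaged flow $\Phi_{\hat{f}}$ also drives points of $W$ into $W'$ by time $T^{*}$, and keeps them trapped in $V$ thereafter by a standard Lyapunov argument applied to $J$ along $\Phi_{\hat{f}}$ (where the perturbation term can be dominated by the strictly negative Lie derivative $\nabla_{\acute{f}} J < 0$ on the annulus $V \setminus W'$).

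The final step is to pass from the averaged system to the extremum seeking system \er{kkir} using the averaging principle on Riemannian manifolds that the paper invokes in its abstract. Because $f(\hat{x},t)$ is periodic in $t$ with period $T = 2\pi/\omega$ (after clearing denominators, by the rationality part of Assumption \ref{as}), a manifold averaging theorem gives, on any compact time interval, $d(\Phi_{f}(t,0,x_{0}),\Phi_{\hat{f}}(t,0,x_{0})) = O(1/\omega)$ uniformly in $x_{0}$ in a compact set. Choosing $\omega$ large enough that this closeness error is smaller than the margin of $W'$ inside $V$, the extremum seeking trajectory starting in $\hat{U}_{x^{*}} \doteq W$ enters $U_{x^{*}}$ by time $T^{*}$; a standard extension argument, reapplying closeness on successive intervals of length $T^{*}$ while $J$ remains bounded away from the level of $\partial V$, then shows the extremum seeking trajectory remains in $U_{x^{*}}$ for all subsequent times.

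The delicate step, which I expect to be the main obstacle, is the averaging estimate on the manifold itself: one cannot simply subtract $f - \hat{f}$ as in Euclidean space since these live in different tangent spaces along the trajectory. The rigorous route is either to work in a normal coordinate chart covering the relevant trajectory (valid because everything happens inside a geodesic ball of radius less than $\iota(M)$, guaranteed by Lemma \ref{ll1}), or to invoke a Riemannian averaging theorem that bounds $d(\Phi_{f},\Phi_{\hat{f}})$ intrinsically. Controlling this estimate uniformly for $x_{0}$ in a compact set and $t$ on intervals of length $T^{*}$ independent of $\omega$, while simultaneously keeping the $a_{i}$-dependent perturbation from Lemma \ref{kir} under control, is where the geometry (bounded injectivity radius, bounded Christoffel symbols on compact $M$) must be used carefully.
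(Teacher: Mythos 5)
Your overall architecture---gradient system locally asymptotically stable (Lemma \ref{kkir2}), averaged system an $O\big((\max_i a_i)^4\big)$ perturbation of it (Lemma \ref{kir}), then closeness of the extremum seeking flow to the averaged flow as $\omega$ grows---matches the paper's, but you resolve the crucial averaging step by a genuinely different route. You propose the classical scheme: a finite-time estimate $d(\Phi_{f},\Phi_{\hat f})=O(1/\omega)$ uniform on compacts, obtained in coordinate charts, followed by re-application on successive intervals of length $T^{*}$ inside a Lyapunov trapping region. The paper instead works intrinsically and never needs the finite-interval iteration: by Lemma \ref{ll11} the composed flow $z(\tau)=\Phi^{(1,0)}_{\frac{1}{\omega}Z}\circ\Phi_{\frac{1}{\omega}f}(\tau,\tau_{0},x_{0})$, with $Z(t,x)=\int^{t}_{0}\big(\hat f(x)-f(x,s)\big)ds$, satisfies a perturbed averaged equation $\dot z=\frac{1}{\omega}\hat f(z)+\frac{1}{\omega^{2}}h(z,\zeta,\tau)$ for all time; the infinite-horizon perturbation result on manifolds (Theorem \ref{t7}, via the converse Lyapunov Lemma \ref{lf1}) then bounds $\limsup_{t\rightarrow\infty}d(z(t),x^{*})$ directly, and the gap between $z$ and the actual extremum seeking trajectory is $O(1/\omega)$ uniformly in time because the unit-time flow of $\frac{1}{\omega}Z$ has length $O(1/\omega)$, estimate (\ref{kirkoon}). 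Your route buys familiarity (it mirrors the Euclidean extremum seeking proofs), but its hardest ingredient---the uniform Riemannian averaging estimate---is asserted rather than proved, whereas the paper's pullback construction of Lemma \ref{ll11} together with Theorem \ref{t7} is precisely the ``Riemannian averaging theorem'' you wish to invoke.

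Two points to repair if you pursue your version. First, Lemma \ref{ll1} does not say what you claim: it only guarantees that the geodesic dither $\exp_{x}\big(\sum^{n}_{i=1}a_{i}\sin(\omega_{i}t)\frac{\partial}{\partial x_{i}}\big)$ stays within the injectivity radius at each point $x$; it gives no control on the optimizing trajectory itself, so the validity of a single normal chart must come from your own sublevel-set trapping construction (the paper does exactly this, confining the analysis to a compact connected sublevel set $\mathcal{N}_{b}(x^{*})\subset\exp_{x^{*}}(B_{\delta}(0))\subset U_{x^{*}}$). Second, your iteration over intervals of length $T^{*}$ requires the closeness constant to be uniform over the trapping region and independent of the number of iterations, with the $a_{i}$-dependent $O\big((\max_i a_i)^4\big)$ drift and the $1/\omega$ averaging error controlled simultaneously; this is where compactness (of $M$ or of the sublevel set), periodicity of $f$ in $t$, and boundedness of $\|Z\|_{g^{M}}$ and $\|h\|_{g^{M}}$ must be invoked explicitly, as the paper does before applying Theorem \ref{t7}.
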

	\begin{proof}
	See Appendix \ref{A000}. 
	\end{proof}
	
  \begin{remark} The results of Theorem \ref{t1} give closeness of the state trajectory of the extremum seeking system (\ref{kk}) to the local optimizer $x^{*}$ since the state trajectory of (\ref{kk}) ultimately enters and remains in $U_{x^{*}}$. This is guaranteed by tuning the frequency $\omega$ and dither amplitudes $a_{i}>0$. 
	\end{remark}
	\begin{remark}
	The proof of Theorem \ref{t1} is based on closeness of solutions and averaging analysis for dynamical systems evolving on Riemannian manifolds and stability of perturbed systems.The results related to averaging for dynamical systems on Riemannian manifolds are presented in Appendix \ref{A1}. The stability results  for perturbed systems on Riemannian manifolds are presented in appendix \ref{A2}. 
 We employ the averaging techniques presented in Appendix \ref{A1} to analyze the closeness of solutions on Riemannian manifolds where the averaged dynamical system is not necessarily stable. However, one may show  that the state trajectory of the averaged system remains bounded in a neighborhood of $x^{*}$.
	\end{remark}
	
	The following lemma presents the state trajectory of a special combination of flows on $(M,g^{M})$. The proof of Theorem \ref{t1} is based on closeness of solutions of the state trajectory of extremum seeking system (\ref{kk}) and the trajectory $z(\cdot)$ constructed in the lemma below. 
	\begin{definition}
	\label{koskalak}
	Let $X,Y\in \mathfrak{X}(M)$ be smooth  vector fields on $M$, where it may be shown that $\Phi_{Y}(t,t_{0},.):M\rightarrow M$ is a local diffeomorphism (see \cite{mar1}). Let us denote  $T_{x}\Phi_{Y}^{(t,t_{0})^{-1}}$ as the pushforward of $\Phi_{Y}^{-1}(t,t_{0},.):M\rightarrow M$ at $x\in M$. Define the \textit{pull back} of $\Phi_{Y}^{(t,t_{0})}\doteq \Phi_{Y}(t,t_{0},\cdot)$ denoted by $\Phi_{Y}^{(t,t_{0})^{*}}$ as follows.
\EQ\label{pu} &&\Phi_{Y}^{(t,t_{0})^{*}}:\mathfrak{X}(M)\rightarrow \mathfrak{X}(M),\nnum\\&& \left(\Phi_{Y}^{(t,t_{0})^{*}}X\right)(x)\doteq T_{\Phi_{Y}(t,t_{0}, x)}\Phi_{Y}^{(t,t_{0})^{-1}}X(\Phi_{Y}(t,t_{0}, x)),\nnum\\&& X\in \mathfrak{X}(M), x\in M, t\in\mathds{R}.\EN
In $\mathds{R}^{n}$, for a diffeomorphism $\phi:\mathds{R}^{n}\rightarrow \mathds{R}^{n}$ and $X\in \mathfrak{X}(\mathds{R}^{n})$, we have 
\EQ (\phi^{*}X)(x)=\big(\frac{\partial \phi^{-1}}{\partial x}\circ X\circ \phi\big)(x)=\frac{\partial \phi^{-1}}{\partial x}(X(\phi(x))).\nnum\EN
	\end{definition}
	
	\begin{lemma}
	\label{ll11}
	Consider a $T$ periodic time varying dynamical system $\dot{x}=\epsilon f(x,t)$, where $f(x,t+T)=f(x,t)$, on an $n$ dimensional Riemannian manifold  $(M,g^{M})$. The averaged dynamical system is given by $\dot{x}=\epsilon\hat{f}(x)$, where $\hat{f}(x)=\frac{1}{T}\int^{T}_{0}f(x,s)ds$ and $\epsilon\in \mathds{R}_{\geq 0}$. Consider the combination of state flows $z(t)\doteq\Phi_{\epsilon Z}^{(1,0)}\circ \Phi_{\epsilon f}(t,t_{0},x_{0})\doteq \Phi_{\epsilon Z}(1,0,\Phi_{\epsilon f}(t,t_{0},x_{0}))\in M$ where $Z(t,x)\doteq\int^{t}_{0}\big(\hat{f}(x)-f(x,s)\big)ds$. Then $z(\cdot)$ satisfies 
\EQ \dot{z}(t)&=&\epsilon\Big[(\Phi^{-1})_{\epsilon Z}^{(1,0)^{*}} f+\int^{1}_{0}(\Phi^{-1})^{(1,s)^{*}}_{\epsilon Z}\big(\hat{f}-f\big)ds\Big]\circ z(t),\nnum\EN
where $(\Phi^{-1})^{(1,s)^{*}}_{\epsilon Z}$ is the pullback of the local diffeomorphism $(\Phi^{-1})^{(1,s)}_{\epsilon Z}\doteq \Phi^{-1}_{\epsilon Z}(1,s,\cdot)$.
	\end{lemma}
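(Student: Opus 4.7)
The plan is to view $z(t)$ as the composition of two maps: the flow $\Phi_{\epsilon f}$ that carries $x_0$ to $x(t)\doteq\Phi_{\epsilon f}(t,t_0,x_0)$, and a parameter-dependent diffeomorphism $\phi_t(\cdot)\doteq\Phi_{\epsilon Z(t,\cdot)}(1,0,\cdot)$ obtained by freezing the outer time $t$ inside $Z$ and flowing the (now autonomous) vector field $\epsilon Z(t,\cdot)$ from flow-time $0$ to flow-time $1$. Thus $z(t)=\phi_t(x(t))$, and the chain rule yields $\dot z(t)=T_{x(t)}\phi_t\cdot\dot x(t)+\frac{\partial \phi_t}{\partial t}(x(t))$. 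I would then rewrite each term as a vector field evaluated at $z(t)$.

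The first contribution is direct. Substituting $\dot x(t)=\epsilon f(x(t),t)$ and using $x(t)=(\Phi^{-1})^{(1,0)}_{\epsilon Z}(z(t))$, the pullback definition \er{pu} immediately identifies $T_{x(t)}\phi_t\cdot\epsilon f(x(t),t)=\epsilon\bigl((\Phi^{-1})^{(1,0)^{*}}_{\epsilon Z}f\bigr)(z(t))$, which is the first term of the claimed expression.

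For the second contribution I would use the manifold version of the variation-of-parameters formula. Setting $\psi(s)\doteq\Phi_{\epsilon Z(t,\cdot)}(s,0,x)$ and $v(s)\doteq\frac{\partial}{\partial t}\psi(s)$, differentiating the defining ODE $\dot\psi(s)=\epsilon Z(t,\psi(s))$ with respect to the parameter $t$ produces the linear variational equation $\dot v(s)=\epsilon\,\frac{\partial Z}{\partial t}(t,\psi(s))+\epsilon\,T_{\psi(s)}Z(t,\cdot)\cdot v(s)$ with $v(0)=0$. The homogeneous solution operator from flow-time $s$ back to $1$ is the pushforward $T\Phi^{(1,s)}_{\epsilon Z}$, since the tangent map of the flow is exactly the resolvent of the linearised equation. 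Duhamel's formula then gives $v(1)=\epsilon\int_0^{1}T_{\psi(s)}\Phi^{(1,s)}_{\epsilon Z}\cdot\frac{\partial Z}{\partial t}(t,\psi(s))\,ds$. Using $\frac{\partial Z}{\partial t}(t,\cdot)=\hat f(\cdot)-f(\cdot,t)$, substituting $\psi(s)=(\Phi^{-1})^{(1,s)}_{\epsilon Z}(z(t))$, and invoking \er{pu} once more, the integrand collapses to $\bigl((\Phi^{-1})^{(1,s)^{*}}_{\epsilon Z}(\hat f-f)\bigr)(z(t))$. Adding the two contributions yields the stated identity.

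The main technical obstacle is the variation-of-parameters step in an intrinsic, coordinate-free setting. On $\mathds{R}^{n}$ it is a routine ODE calculation, but on $(M,g^{M})$ one must correctly track the tangent space in which each object lives and verify that the pushforward of the flow really is the resolvent of the linearised equation. This can be checked by a brief computation in local coordinates on a normal neighborhood of $\psi(s)$, whose existence is ensured by the local-diffeomorphism property of flows on $M$; once established, the pullback notation of \er{pu} packages the manipulations cleanly and everything else reduces to bookkeeping.
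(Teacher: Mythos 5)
Your proposal is correct and follows essentially the same route as the paper's proof: the identical decomposition of $\dot z$ into the pushforward of $\epsilon f$ under $\Phi_{\epsilon Z}^{(1,0)}$ (rewritten via the pullback definition \er{pu}) plus the parameter-derivative of the flow of $\epsilon Z$ with respect to $t$, using $\frac{\partial Z}{\partial t}=\hat f-f$. The only difference is that where the paper invokes the cited variation formula (Lemma \ref{ll111}, from Agrachev/Lewis), you rederive it via the variational equation and Duhamel's formula, which is precisely the standard proof of that lemma.
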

	\begin{proof}
	See Appendix \ref{A1}.
	\end{proof}
	
	The results of Lemma \ref{ll11} are employed to obtain the closeness of solutions for the dynamical systems (\ref{kk}) and its averaged system as per Lemma \ref{kir2}. The full proof of Theorem \ref{t1} is given in Appendix \ref{A000}.

	\section{Extremum seeking on Lie groups}
	\label{slie}
	The extremum seeking system (\ref{kkir}) is modified for optimization on Lie groups. In this case we employ the group structure of the ambient manifold and define the extremum seeking vector field along the exponential maps on Lie groups. This makes the computation of the geodesic dithers defined before particularly simple for matrix Lie groups.  The following lemma characterizes goedesics on Lie groups which admit bi-invariant Riemannian metrics.
	\begin{lemma}
	\label{lgg}
Assume $G$ is a Lie group which admits a bi-invariant Riemannian metric then for a left invariant vector field $X$ on $G$, $ g\star \exp(tX(e))$ is a geodesic emanating from $g\in G$.

\end{lemma}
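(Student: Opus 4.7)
The plan is to combine Lemma \ref{lg} with the fact that left translation is an isometry of a bi-invariant metric, and then appeal to the principle that isometries carry geodesics to geodesics.

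First, I would invoke Lemma \ref{lg} to rewrite the one-parameter subgroup $\exp(tX(e))$ as the Riemannian geodesic $\exp_{e}(tX)$ emanating from the identity $e$ with initial velocity $X(e)\in T_e G$. Denote this curve by $\gamma(t)\doteq\exp_{e}(tX)$, so that $\gamma(0)=e$ and $\nabla^{\mathds{G}}_{\dot\gamma(t)}\dot\gamma(t)=0$.

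Next, consider left translation $L_{g}:G\to G$, $L_{g}(h)\doteq g\star h$. Because the Riemannian metric $g^{G}$ on $G$ is bi-invariant, it is in particular left-invariant, so $L_{g}$ is an isometry for every $g\in G$. Define $\tilde\gamma(t)\doteq g\star\gamma(t)=L_{g}(\gamma(t))$, so that $\tilde\gamma(0)=g\star e=g$ and $\dot{\tilde\gamma}(t)=T_{\gamma(t)}L_{g}\,\dot\gamma(t)$. Since the Levi-Civita connection is uniquely determined by the metric, isometries intertwine it; equivalently, the left-invariance identity $T L_{g}\bigl(\nabla^{\mathds{G}}_{X}Y\bigr)=\nabla^{\mathds{G}}_{TL_{g}X}TL_{g}Y$ recorded in the excerpt applies along the curve. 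Therefore
\EQ
\nabla^{\mathds{G}}_{\dot{\tilde\gamma}(t)}\dot{\tilde\gamma}(t)
= T_{\gamma(t)}L_{g}\bigl(\nabla^{\mathds{G}}_{\dot\gamma(t)}\dot\gamma(t)\bigr)
= T_{\gamma(t)}L_{g}(0)=0,\nnum
\EN
so $\tilde\gamma$ satisfies the geodesic equation \er{geo} and starts at $g$, which is the desired conclusion.

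The only delicate point is justifying that the left-invariance identity for $\nabla^{\mathds{G}}$, stated in the excerpt for left-invariant vector fields, may be applied along the curve $\gamma$ to the (not a priori left-invariant) velocity field $\dot\gamma$. I would handle this either by extending $\dot\gamma(t)$ locally to a left-invariant vector field agreeing with it on $\gamma$ (possible because $\dot\gamma(0)=X(e)$ generates a left-invariant field and the geodesic equation is point-wise), or more cleanly by quoting the standard fact that any isometry of a Riemannian manifold pulls geodesics back to geodesics; this is the expected main technical nuisance but is essentially bookkeeping rather than a substantive obstacle.
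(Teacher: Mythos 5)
Your proposal is correct and follows essentially the same route as the paper: invoke Lemma \ref{lg} to identify $\exp(tX(e))$ with the geodesic through $e$, then use the left-invariance of the Levi-Civita connection (equivalently, that left translation is an isometry) to pull $T L_{g}$ out of $\nabla^{\mathds{G}}_{\dot\gamma}\dot\gamma$ and conclude the translated curve is a geodesic through $g$. The delicate point you flag (applying the invariance identity along the curve's velocity field) is glossed over in the paper's proof as well, and your suggested fixes are the standard way to tidy it up.
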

\begin{proof}
The proof is a straightforward extension of Lemma \ref{lg}. To show that $\gamma(t)=g\star \exp(tX), \hspace{.2cm}X\in \mathcal{L}$ is a geodesic through $g\in G$ we have
			\EQ \nabla^{\mathds{G}}_{\dot{\gamma}(t)}\dot{\gamma}(t)&=&\nabla^{\mathds{G}}_{T_{\exp(tX)}g\frac{d\exp(tX)}{dt}}T_{\exp(tX)}g\frac{d\exp(tX)}{dt}\nnum\\&=&T_{ \exp(tX)}g\nabla^{\mathds{G}}_{\frac{d\exp(tX)}{dt}}\frac{d\exp(tX)}{dt}=0,\nnum\EN
			since $\exp(tX)$ is a geodesic by Lemma \ref{lg} and $\nabla^{\mathds{G}}_{\frac{d\exp(tX)}{dt}}\frac{d\exp(tX)}{dt}=0$
\end{proof}

 The geodesic dither (\ref{gd}) is given along $\exp$ on Lie groups by 
\EQ \label{ed}g_{p}(t)=g\star \exp\left(\sum^{n}_{i=1}a_{i}\sin(\omega_{i}t)\frac{\partial}{\partial g_{i}}\right), \EN
where $\frac{\partial}{\partial g_{i}}$ are the base elements of $\mathcal{L}$. The extremum seeking vector field on $G$ is then defined by
\EQ \label{kkg}&&f(g,t)\doteq - \sum^{n}_{i=1}a_{i}\sin(\omega_{i}t)J\left(g\star \exp \left(\sum^{n}_{j=1}a_{j}\sin(\omega_{j}t)\frac{\partial}{\partial g_{j}}\right)\right)T_{e}g\left(\frac{\partial}{\partial g_{i}}\right)\in T_{g}G,\nnum\\\hspace{1.2cm}&&i=1,\cdots,n,\EN
	where $T_{e}g\left(\frac{\partial}{\partial g_{i}}\right),\hspace{.2cm}i=1,\cdots,n$ are tangent vectors at $T_{g}G$. One may easily verify that $T_{e}(g_{1}\star g_{2})\left(\frac{\partial}{\partial g_{i}}\right)=T_{g_{2}}g_{1}\left(T_{g_{2}}\left(\frac{\partial}{\partial g_{i}}\right)\right)$ which shows that $T_{e}g\left(\frac{\partial}{\partial g_{i}}\right)$ is a left invariant vector field. Note that $f(g,t)$ is not necessarily left invariant since $J(g_{1}\star g_{2}\star \exp (\sum^{n}_{i=1}a_{i}\sin(\omega_{i}t)))=J( g_{2}\star \exp (\sum^{n}_{i=1}a_{i}\sin(\omega_{i}t)))$ is not true in general.  
	
	The following theorem gives the stability of the geodesic extremum seeking algorithm (\ref{kkg}) on compact Lie groups.
	\begin{theorem}
	\label{tt1}
  Consider the extremum seeking system (\ref{kkg}) on a compact connected $n$ dimensional Lie group $G$, where $\omega_{i}=\omega\bar{\omega}_{i},\hspace{.2cm}i=1,\cdots,n$ satisfy Assumption \ref{as}. Assume $g^{*}\in G$ is a unique local optimizer of $J:G\rightarrow \mathds{R}_{\geq 0}$, where $J$ satisfies Assumption \ref{as}. Then for any neighborhood $U_{g^{*}}\subset G$ of $G^{*}$ on $G$, there exist sufficiently small parameters $a_{i}>0, \hspace{.2cm}i=1,\cdots,n$,  sufficiently large $\omega$ and a neighborhood $\hat{U}_{g^{*}}\subset G$ of $g^{*}$ such that for any $g_{0}\in \hat{U}_{g^{*}}$, the state trajectory of (\ref{kkg}) initiating from $g_{0}$ ultimately enters and remains in $U_{g^{*}}$.
  \end{theorem}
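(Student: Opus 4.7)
The plan is to reduce Theorem \ref{tt1} to Theorem \ref{t1} by exploiting the fact that a compact Lie group $G$ always admits a bi-invariant Riemannian metric (a standard averaging-over-the-group construction). Once such a metric $g^{G}$ is fixed, $(G,g^{G})$ is a compact connected $n$-dimensional Riemannian manifold, so the hypotheses of Theorem \ref{t1} are satisfied at the level of the ambient manifold; what remains is to identify the Lie-group extremum seeking vector field (\ref{kkg}) with an instance of the manifold extremum seeking vector field (\ref{kk}).

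The key identification step uses Lemma \ref{lgg}: for any $g \in G$ and any left-invariant vector field $X$, the curve $t \mapsto g \star \exp(tX(e))$ is the Riemannian geodesic emanating from $g$ with initial velocity $T_{e} g \, X(e)$, i.e.\ it coincides with $\exp_{g}(t\, T_{e} g\, X(e))$. Applied with $X(e) = \sum_{i=1}^{n} a_{i}\sin(\omega_{i} t)\frac{\partial}{\partial g_{i}}$ (viewed as a fixed element of $\mathcal{L}$ at each instant $t$, since $t$ enters only as a parameter), this shows that the Lie-group dither (\ref{ed}) equals the geodesic dither (\ref{gd}) on $(G,g^{G})$, provided we choose the basis of $T_{g}G$ in (\ref{kk}) to be the left-invariant frame $\{T_{e} g (\partial/\partial g_{i})\}_{i=1}^{n}$. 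With this choice, the coefficient $J(g\star\exp(\cdot))$ in (\ref{kkg}) matches $J(\exp_{\hat x}(\cdot))$ in (\ref{kk}) and the tangent-direction factor $T_{e} g(\partial/\partial g_{i})$ matches the basis vector $\partial/\partial x_{i}$ at $\hat x = g$. Hence (\ref{kkg}) is literally (\ref{kk}) on $(G,g^{G})$ written in a left-invariant frame, with $k=-1$.

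Once this identification is established, I would invoke Lemma \ref{ll1} (small $a_{i}$ keep the dither inside $\iota(g)$, which is uniformly positive by compactness of $G$), Lemma \ref{kir} (average of the closed loop is a perturbation of the scaled gradient system for $J$), and Lemma \ref{kkir2} (local asymptotic stability of $g^{*}$ for the gradient system under Assumption \ref{as}). Theorem \ref{t1} then yields: for any open neighborhood $U_{g^{*}}\subset G$ of $g^{*}$, there exist sufficiently small $a_{i}>0$, sufficiently large $\omega$, and a neighborhood $\hat{U}_{g^{*}}\subset G$ such that every trajectory of (\ref{kkg}) starting in $\hat{U}_{g^{*}}$ ultimately enters and remains in $U_{g^{*}}$, which is exactly the conclusion of Theorem \ref{tt1}.

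The main obstacle is verifying that the pointwise identification of (\ref{kkg}) with (\ref{kk}) really does give the same vector field rather than merely the same dither trajectory: one must check that the left-invariant frame $\{T_{e} g(\partial/\partial g_{i})\}$ is a \emph{smooth} basis for $TG$ (immediate from left translation being a diffeomorphism) and that plugging this frame into (\ref{kk}) reproduces (\ref{kkg}) without spurious Christoffel-symbol contributions — which is the case because the extremum seeking vector field is defined by a pointwise linear combination of basis vectors with scalar coefficients, not by any covariant-derivative expression. A minor additional point is that Assumption \ref{as} on dither frequencies, the positive definiteness of $J$ near $g^{*}$, and the bi-invariance of $g^{G}$ are invariant under the identification, so no extra hypotheses are needed beyond those already stated.
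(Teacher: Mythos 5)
Your proposal is correct and follows essentially the same route as the paper: compactness of $G$ yields a bi-invariant metric, Lemma \ref{lgg} identifies the exponential dither $g\star\exp(\sum_i a_i\sin(\omega_i t)\frac{\partial}{\partial g_i})$ with a geodesic dither, and the argument then reduces to Theorem \ref{t1}. Your additional care in checking that the left-invariant frame makes (\ref{kkg}) literally an instance of (\ref{kk}) is a useful elaboration of what the paper states more tersely, but it is not a different proof.
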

	\begin{proof}
	Since $G$ is compact then it possesses a bi-invariant Riemannian metric. Hence, Lemma \ref{lgg} implies that $g\star \exp(\sum^{n}_{i=1}a_{i}\sin(\omega_{i}t)\frac{\partial}{\partial g_{i}}) $ is a geodesic through $g\in G$ and the proof is identical to the proof of Theorem \ref{t1}. 
	\end{proof}
	
	In the case that $G$ is not compact we employ the Taylor expansion of smooth functions on $G$, given in \cite{knap}. 
	\begin{lemma}[\hspace{-.01cm}\cite{knap}]
	Consider a left invariant vector field $X\in \mathfrak{X}(G)$ which is identified by $X(e)\in \mathcal{L}$. Then for a smooth function $J:G\rightarrow \mathds{R}$ we have 
	\EQ J(g\star \exp(X))=\sum^{m}_{j=1}\frac{1}{j!}(X^{j}J)(g)+\frac{1}{m!}\int^{1}_{0}(1-s)^{m}(X^{m+1}J)(g\star \exp(s X))ds,\nnum\EN
	where $X^{j}J(g)=\frac{d^{j}}{dt^{j}}J(g\star \exp(t X))\left|_{t=0}\right.$.
	\end{lemma}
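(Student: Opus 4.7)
The plan is to reduce the statement to the classical Taylor theorem with integral remainder on the interval $[0,1]$ by pulling everything back to a one-variable smooth function. Concretely, I would define
\begin{equation*}
\phi : [0,1] \to \mathds{R}, \qquad \phi(t) \doteq J\bigl(g \star \exp(tX)\bigr),
\end{equation*}
which is smooth because $J$, the Lie group multiplication, and the exponential map are smooth. The target formula is then just
\begin{equation*}
\phi(1) \;=\; \phi(0) + \sum_{j=1}^{m} \frac{\phi^{(j)}(0)}{j!} + \frac{1}{m!}\int_0^1 (1-s)^m \phi^{(m+1)}(s)\, ds,
\end{equation*}
which is the standard Taylor identity with integral remainder applied to $\phi$ on $[0,1]$ (with the understanding that the $J(g) = \phi(0)$ term is counted in the expansion).

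The substantive step is identifying $\phi^{(j)}(t)$ with $(X^j J)(g \star \exp(tX))$. First I would observe that $t \mapsto g \star \exp(tX)$ is the integral curve of the left invariant vector field $X$ starting at $g$. This is the remark made just before the lemma in the Lie group preliminaries: the solution of $\dot{\gamma}(t) = X(\gamma(t))$ with $\gamma(0) = g$ is $\gamma(t) = g \star \exp(tX(e))$, which follows from left invariance of $X$. Therefore by the definition of a vector field acting on a function,
\begin{equation*}
\phi'(t) \;=\; dJ\bigl(\dot{\gamma}(t)\bigr) \;=\; dJ\bigl(X(\gamma(t))\bigr) \;=\; (XJ)\bigl(g \star \exp(tX)\bigr).
\end{equation*}
By induction on $j$, applying the same identity to the smooth function $X^{j-1}J$ in place of $J$ (noting that $X^{j-1}J$ is again a smooth function on $G$), one obtains $\phi^{(j)}(t) = (X^j J)(g \star \exp(tX))$ for every $j \geq 0$. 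Evaluating at $t = 0$ gives $\phi^{(j)}(0) = (X^j J)(g)$, which matches the definition of $X^j J(g)$ stated in the lemma, and evaluating at $t = s$ gives the integrand of the remainder term.

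Finally I would invoke the standard one-dimensional Taylor theorem with integral remainder on the smooth scalar function $\phi$ to conclude. The main obstacle, and the only nontrivial ingredient, is the induction step in the previous paragraph: it requires that the operator $X$ (viewed as a derivation on $C^\infty(G)$) commutes with evaluation along the integral curve of $X$, which is exactly the content of left invariance together with the flow formula. Once that identification is in hand, the claimed expansion is a direct transcription of the classical Taylor remainder formula, with no further differential-geometric input required.
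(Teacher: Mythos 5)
Your proof is correct, and it is the standard argument: the paper itself offers no proof of this lemma (it is quoted from \cite{knap}), and the argument in that reference is exactly your reduction --- pull $J$ back along the integral curve $t\mapsto g\star\exp(tX)$ of the left invariant field $X$, identify $\phi^{(j)}(t)=(X^{j}J)(g\star\exp(tX))$ by induction using $\frac{d}{dt}h(\gamma(t))=(Xh)(\gamma(t))$ for any smooth $h$, and apply the one-variable Taylor theorem with integral remainder on $[0,1]$. One small point of precision: left invariance is only needed to know that the integral curve through $g$ is $g\star\exp(tX(e))$ (as stated in the preliminaries); the identity $\frac{d}{dt}h(\gamma(t))=(Xh)(\gamma(t))$ holds for any integral curve of any vector field. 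Note also that your expansion correctly carries the zeroth-order term $\phi(0)=J(g)$, whereas the lemma as printed starts the sum at $j=1$ and omits $J(g)$; your version is the right one, and it is the one actually used later in the paper (the expansion in the proof of Lemma \ref{kirak} begins with $J(g)$), so the printed index range should be read as starting at $j=0$.
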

	
	\begin{lemma}
	\label{kirak}
	Consider the extremum seeking algorithm in (\ref{kkg}) on a Lie group $G$ where the optimizing trajectory is perturbed by the exponential dither presented in (\ref{ed}). Then, subject to Assumption \ref{as}, the averaged dynamical system of (\ref{kkg}) is a perturbation of the first order variation $X_{a}(g)J$, where $X_{a}(g)$ is the left invariant vector field identified by $X_{a}(e)=\sum^{n}_{i=1}a^{2}_{i}\frac{\partial}{\partial g_{i}}$.
	\end{lemma}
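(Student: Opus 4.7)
The plan is to parallel the proof of Lemma \ref{kir}, replacing the Riemannian geodesic Taylor expansion with the Lie group Taylor expansion of the preceding lemma. Introduce the dither direction $X_{t}\doteq\sum_{j=1}^{n}a_{j}\sin(\omega_{j}t)\frac{\partial}{\partial g_{j}}\in\mathcal{L}$, and for each $j$ let $X_{j}$ denote the left invariant vector field with $X_{j}(e)=\frac{\partial}{\partial g_{j}}$. Applying the Taylor formula of the preceding lemma to order $m=2$ gives
\EQ J(g\star\exp(X_{t}))=J(g)+(X_{t}J)(g)+\frac{1}{2}(X_{t}^{2}J)(g)+R_{2}(g,t),\nnum\EN
where smoothness of $J$ together with boundedness of the dither guarantees $R_{2}(g,t)=O(a^{3})$ uniformly on compact subsets (using compactness of $G$, or Remark \ref{rm} when $G$ is noncompact). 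Since $X_{t}$ is time dependent only through scalar coefficients,
\EQ (X_{t}J)(g)=\sum_{j=1}^{n}a_{j}\sin(\omega_{j}t)(X_{j}J)(g),\quad (X_{t}^{2}J)(g)=\sum_{j,k=1}^{n}a_{j}a_{k}\sin(\omega_{j}t)\sin(\omega_{k}t)(X_{j}X_{k}J)(g).\nnum\EN

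Next I would substitute this expansion into the definition (\ref{kkg}) of $f(g,t)$ and evaluate the average $\hat{f}(g)=\frac{1}{T}\int_{0}^{T}f(g,t)\,dt$ term by term; the common period $T$ exists by the rationality hypothesis of Assumption \ref{as}. The $J(g)$ contribution multiplies $\sin(\omega_{i}t)$ and averages to zero. The first order contribution yields pairwise products $\sin(\omega_{i}t)\sin(\omega_{j}t)$, whose time average equals $\frac{1}{2}\delta_{ij}$ under Assumption \ref{as}, producing
\EQ -\frac{1}{2}\sum_{i=1}^{n}a_{i}^{2}(X_{i}J)(g)\,T_{e}g\!\left(\frac{\partial}{\partial g_{i}}\right),\nnum\EN
which is exactly the vector field obtained by weighting the left invariant basis by the $a_{i}^{2}$ scaled directional derivatives of $J$, i.e.\ $-\frac{1}{2}X_{a}(g)J$ in the notation of the statement. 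The second order contribution generates triple sine products $\sin(\omega_{i}t)\sin(\omega_{j}t)\sin(\omega_{k}t)$; each of the constituent frequencies $\pm\bar\omega_{i}\pm\bar\omega_{j}\pm\bar\omega_{k}$ is nonzero by the conditions $2\bar\omega_{i}\ne\bar\omega_{j}$ and $\bar\omega_{i}\ne\bar\omega_{j}+\bar\omega_{k}$ of Assumption \ref{as}, so these averages vanish. Together with the integrated Taylor remainder, this packages the entire second order piece as $O(a^{3})$.

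Collecting these computations yields
\EQ \hat{f}(g)=-\frac{1}{2}X_{a}(g)J+O(a^{3}),\nnum\EN
identifying the averaged extremum seeking system as a perturbation of the first order variation $X_{a}(g)J$, as claimed. The main obstacle is careful tensorial bookkeeping: $f(g,t)$ is a sum of left invariant basis vectors $T_{e}g\!\left(\frac{\partial}{\partial g_{i}}\right)$ multiplied by scalar time varying coefficients, and one must keep the spatial derivations $X_{j}$ arising from the Taylor expansion distinct from the sinusoidal time factors before computing products and averages. Apart from this, the only analytic point is uniformity of the Taylor remainder, which follows from compactness of $G$ (or Remark \ref{rm}); the remaining step reduces to the same trigonometric identities used in the Euclidean extremum seeking literature.
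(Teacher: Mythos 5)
Your proposal follows essentially the same route as the paper: the left-invariant Taylor expansion from the preceding lemma, substitution into (\ref{kkg}), and term-by-term averaging under Assumption \ref{as}, producing the averaged field $-\sum_{i=1}^{n}\frac{a_{i}^{2}}{2}\left(T_{e}g\frac{\partial}{\partial g_{i}}J\right)(g)\,T_{e}g\left(\frac{\partial}{\partial g_{i}}\right)$ plus a higher-order remainder. The only (harmless) discrepancy is the remainder order: since the triple sine averages vanish under Assumption \ref{as}, the surviving term is $a_{i}\sin(\omega_{i}t)R_{2}$ with $R_{2}=O(a^{3})$, so the perturbation is $O((\max_{i}a_{i})^{4})$ as the paper states, although your weaker $O(a^{3})$ bound still suffices for the lemma as stated.
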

	\begin{proof}
	Parallel to the proof of Lemma \ref{kir} we have 
	\EQ &&J(g\star \exp\sum^{n}_{i=1}a_{i}\sin(\omega_{i}t)\frac{\partial}{\partial g_{i}})=J(g)+(XJ)(g)+\cdots+\nnum\\&&\frac{1}{(m-1)!}\times\left(X^{m-1}J\right)(g)+\frac{1}{(m-1)!}\times\int^{1}_{0}(1-s)^{m-1}X^{m}\nnum\\&&J\left(g\star\exp s \sum^{n}_{i=1}a_{i}\sin(\omega_{i}t)\frac{\partial}{\partial g_{i}}\right)ds,\nnum\EN
	where $X(g,t)=\sum^{n}_{i=1}a_{i}\sin(\omega_{i}t)T_{e}g\frac{\partial}{\partial g_{i}}$. Following the proof of Lemma \ref{kir}, after averaging and changing  the time scale to $\tau=\omega t$ we have
	\EQ \label{ti1}\frac{dg}{d\tau}=&&-\frac{1}{\omega}\sum^{n}_{i=1}\frac{a^{2}_{i}}{2}\left(T_{e}g\frac{\partial}{\partial g_{i}}J\right)(g)T_{e}g\frac{\partial}{\partial g_{i}}+\frac{1}{\omega}\sum^{n}_{i=1}O((\max_{i\in1,\cdots,n}a_{i})^{4})T_{e}g\frac{\partial}{\partial g_{i}},\EN
	which completes the proof. 
	\end{proof}
	
	Note that since $G$ is not compact the perturbation vector field\\ $\frac{1}{\omega}\sum^{n}_{i=1}O((\max_{i\in1,\cdots,n}a_{i})^{4})T_{e}g\frac{\partial}{\partial g_{i}}$ is not uniformly bounded on $G$. However, at any point $g\in G$ its magnitude is of order $O((\max_{i\in1,\cdots,n}a_{i})^{4})$. It is shown in the proof of Theorem \ref{t7} in Appendix \ref{A2} that selecting the perturbation of an asymptotic stable system on a Riemannian manifold sufficiently small guarantees that the state trajectory of the perturbed system remains in a compact neighborhood of the equilibrium of the asymptotic stable system. In that case the  magnitude of perturbation vector field above can be uniformly bounded on a compact set containing the equilibrium. 
	
	\begin{lemma}
	\label{kakir2}
	Consider the gradient dynamical system \\$\dot{g}=-\sum^{n}_{i=1}\frac{a^{2}_{i}}{2}\left(T_{e}g\frac{\partial}{\partial g_{i}}J\right)(g)T_{e}g\frac{\partial}{\partial g_{i}},\hspace{.2cm}g\in G$ on an $n$ dimensional Lie group $(G,\star)$. Then subject to Assumption \ref{as}, $g^{*}$ is  is locally asymptotically stable on $(G,\star)$ for the gradient dynamical system. 
	\end{lemma}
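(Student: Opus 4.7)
The plan is to mimic the proof strategy of Lemma \ref{kkir2} but replace the Riemannian coordinate basis with the left-invariant frame generated by $\{T_{e}g\frac{\partial}{\partial g_{i}}\}_{i=1}^{n}$ on $G$. The key point is that even though $G$ is not assumed to carry a bi-invariant metric here, the vector field on the right-hand side of the system still has the structure of a squared-norm decrease when paired with the derivative $dJ$, so $J$ itself serves as a local Lyapunov function.

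First I would observe that by Assumption \ref{as}, $J$ is smooth and locally positive definite on a neighborhood of $g^{*}$ with $J(g^{*})=0$, so $J$ is an admissible Lyapunov candidate. The derivative of $J$ along a trajectory $g(\cdot)$ of the gradient system is obtained via
\begin{equation*}
\dot{J}(g)=dJ|_{g}(\dot{g})=-\sum_{i=1}^{n}\frac{a_{i}^{2}}{2}\bigl(T_{e}g\tfrac{\partial}{\partial g_{i}}J\bigr)(g)\,dJ|_{g}\bigl(T_{e}g\tfrac{\partial}{\partial g_{i}}\bigr).
\end{equation*}
By the definition of the pushforward and directional derivative, $dJ|_{g}(T_{e}g\tfrac{\partial}{\partial g_{i}})=(T_{e}g\tfrac{\partial}{\partial g_{i}}J)(g)$, so the expression collapses to
\begin{equation*}
\dot{J}(g)=-\sum_{i=1}^{n}\frac{a_{i}^{2}}{2}\bigl[(T_{e}g\tfrac{\partial}{\partial g_{i}}J)(g)\bigr]^{2}\le 0.
\end{equation*}

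Next I would argue that $\dot{J}(g)=0$ forces $g$ to be a critical point of $J$. Since left translation $L_{g}$ is a diffeomorphism on $G$, the pushforward $T_{e}g$ carries the basis $\{\tfrac{\partial}{\partial g_{i}}\}$ of $\mathcal{L}=T_{e}G$ to a basis $\{T_{e}g\tfrac{\partial}{\partial g_{i}}\}$ of $T_{g}G$. Vanishing of $(T_{e}g\tfrac{\partial}{\partial g_{i}}J)(g)$ for all $i$ is therefore equivalent to $dJ|_{g}=0$. By Assumption \ref{as}, $g^{*}$ is a unique local minimizer, hence a strictly isolated critical point, so on a sufficiently small open neighborhood $U\subset G$ of $g^{*}$ the only zero of $\dot{J}$ is $g^{*}$. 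Consequently, on $U\setminus\{g^{*}\}$ one has $\dot{J}<0$, and $J$ is a strict Lyapunov function for the gradient flow on $U$.

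The conclusion of local asymptotic stability then follows from the standard Lyapunov stability theorem, which applies on manifolds by working in a coordinate chart around $g^{*}$ (equivalently, via the exponential chart using any auxiliary Riemannian metric) and invoking \cite{Kha}. I do not expect a serious obstacle: non-compactness of $G$ is harmless since the argument is purely local; the real technical point is simply verifying that the left-invariant frame $\{T_{e}g\tfrac{\partial}{\partial g_{i}}\}$ is genuinely a basis at every $g$ so that the squared terms in $\dot{J}$ jointly certify a strict decrease, and this is immediate from the diffeomorphism property of $L_{g}$.
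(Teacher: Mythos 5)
Your proposal is correct and follows essentially the same route as the paper: compute $\mathcal{L}_{\acute{f}}J=-\sum^{n}_{i=1}\frac{a^{2}_{i}}{2}\bigl(T_{e}g\frac{\partial}{\partial g_{i}}J\bigr)^{2}(g)\leq 0$, use Assumption \ref{as} to conclude this vanishes locally only at $g^{*}$, and invoke the standard Lyapunov theorem for local asymptotic stability. Your added remark that $\{T_{e}g\frac{\partial}{\partial g_{i}}\}$ is a basis of $T_{g}G$ (so vanishing of all these derivatives means $dJ|_{g}=0$) just makes explicit a point the paper leaves implicit.
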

	\begin{proof}
	The proof parallels the proof iof Lemma \ref{kkir2} since 
	\EQ \label{lily}\mathcal{L}_{-\sum^{n}_{i=1}\frac{a^{2}_{i}}{2}\left(T_{e}g\frac{\partial}{\partial g_{i}}J\right)(g)T_{e}g\frac{\partial}{\partial g_{i}}}J(g)=-\sum^{n}_{i=1}\frac{a^{2}_{i}}{2}\left(T_{e}g\frac{\partial}{\partial g_{i}}J\right)^{2}(g)\leq 0.\EN
	\end{proof}
	
	Note that by Assumption \ref{as}, $\left(X J\right)(g)=\lim_{t\rightarrow 0}\frac{J(g\star\exp(tX(e)))-J(g)}{t}=0$ only at the unique local optimal point $g^{*}$. 
	
	\begin{theorem}
	\label{tt11}
  Consider the extremum seeking system (\ref{kkg}) on a connected $n$ dimensional Lie group $G$, where $\omega_{i}=\omega\bar{\omega}_{i},\hspace{.2cm}i=1,\cdots,n$ satisfy Assumption \ref{as}. Assume $g^{*}\in G$ is a unique local optimizer of $J:G\rightarrow \mathds{R}_{\geq 0}$, where $J$ satisfies Assumption \ref{as}. Then for any neighborhood $U_{g^{*}}\subset G$ of $G^{*}$ on $G$, there exist sufficiently small parameters $a_{i},\hspace{.2cm}i=1,\cdots,n$, sufficiently large $\omega$ and a neighborhood $\hat{U}_{g^{*}}\subset G$ of $g^{*}$ such that for any $g_{0}\in \hat{U}_{g^{*}}$, the state trajectory of (\ref{kkg}) initiating from $g_{0}$ ultimately enters and remains in $U_{g^{*}}$.
  \end{theorem}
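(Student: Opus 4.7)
The plan is to parallel the proof of Theorem \ref{t1}, but to work on a precompact neighborhood of $g^{*}$ rather than on all of $G$, using the fact that a Lie group is Hausdorff and hence locally compact (Remark \ref{rm}). First I would fix any open neighborhood $U_{g^{*}}\subset G$ of $g^{*}$ as in the statement, and then choose a smaller precompact open set $V$ with $g^{*}\in V\subset \overline{V}\subset U_{g^{*}}$. Because $\overline{V}$ is compact, all the tools that were used in the compact-$M$ proof — uniform boundedness of dither signals in the Riemannian norm (analog of Lemma \ref{ll1}), uniform bounds on Taylor remainders, and a strictly positive injectivity radius — become available on $\overline{V}$.

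Next I would invoke Lemma \ref{kirak} to identify the averaged extremum-seeking vector field on $G$ with a perturbation of the scaled gradient-like field $-\sum_{i=1}^{n}\frac{a_i^{2}}{2}\bigl(T_{e}g\tfrac{\partial}{\partial g_{i}}J\bigr)(g)\,T_{e}g\tfrac{\partial}{\partial g_{i}}$, where the perturbation is of pointwise order $O((\max_{i}a_{i})^{4})$. Lemma \ref{kakir2} then provides local asymptotic stability of $g^{*}$ for this gradient system, with $J$ itself serving as a Lyapunov function by \eqref{lily} and the uniqueness of $g^{*}$ as a local minimizer. Standard converse-Lyapunov reasoning on manifolds (as used in Appendix \ref{A2}) yields a sublevel set of $J$ contained in $V$ that is forward invariant for the gradient flow and attractive from a neighborhood $\hat U_{g^{*}}\subset V$ of $g^{*}$.

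I would then apply the perturbation-stability result (Theorem \ref{t7} in Appendix \ref{A2}) to the averaged system: by choosing the dither amplitudes $a_{i}$ small enough, the $O((\max_{i}a_{i})^{4})$ perturbation vector field is uniformly bounded on the compact set $\overline{V}$, and Theorem \ref{t7} guarantees that trajectories of the averaged system starting in $\hat U_{g^{*}}$ remain in a slightly enlarged compact subset of $V$ and ultimately enter $U_{g^{*}}$. Finally, closeness of solutions between the time-varying extremum-seeking system \eqref{kkg} and its average is established exactly as in Theorem \ref{t1} via the pull-back construction of Lemma \ref{ll11} with $\epsilon=1/\omega$ (after the time rescaling $\tau=\omega t$ used in \eqref{ti1}); choosing $\omega$ sufficiently large then transfers the ultimate-boundedness property from the averaged trajectory to the trajectory of \eqref{kkg}.

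The main obstacle is precisely the non-uniformity of the $O((\max_{i}a_{i})^{4})$ remainder on a non-compact $G$: all the Taylor-remainder bounds in Lemma \ref{kirak} and the closeness-of-solutions estimates rely on uniform bounds over the domain of interest. The way around this is to first shrink to the compact neighborhood $\overline{V}$ and then argue, in the style of Theorem \ref{t7}, that the combined smallness of $a_{i}$ and largeness of $\omega$ prevent the trajectory from ever leaving $\overline{V}$; once that forward invariance is established, all estimates that required compactness in the proof of Theorem \ref{t1} apply verbatim on $\overline{V}$, and the conclusion follows.
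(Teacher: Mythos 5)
Your overall strategy is the paper's: use Lemmas \ref{kirak} and \ref{kakir2} to see the averaged system as a perturbed gradient flow, apply Theorem \ref{t7}, and transfer to (\ref{kkg}) via the pull-back construction of Lemma \ref{ll11}, with compactness recovered locally rather than globally. However, there is a genuine gap at exactly the point where the non-compact case differs from Theorem \ref{t1}. The Lyapunov/invariance argument of Theorem \ref{t7} controls the trajectory $z(\tau)=\Phi_{\frac{1}{\omega}Z}^{(1,0)}\circ\Phi_{\frac{1}{\omega}f}(\tau,\tau_{0},g_{0})$ of the transformed system $\hat{f}+\frac{1}{\omega}h$, \emph{not} the trajectory of the oscillatory full system (\ref{kkg}) itself; your claim that one can argue ``in the style of Theorem \ref{t7}'' that the full trajectory never leaves $\overline{V}$ is circular as stated, because to compare $\Phi_{\frac{1}{\omega}f}$ with $z(\cdot)$ you need the estimate $d\big(\Phi_{\frac{1}{\omega}Z}(1,0,x),x\big)=O\left(\frac{1}{\omega}\right)$ of (\ref{kirkoon}) uniformly along the full trajectory, and that uniformity is precisely what fails without already knowing the full trajectory lies in a compact set.

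The paper resolves this by a specific construction you do not supply: since $z(\tau)$ stays in a compact sublevel set $\mathcal{N}_{b}(g^{*})$, the full trajectory satisfies $\Phi_{\frac{1}{\omega}f}(\tau,\tau_{0},g_{0})\in(\Phi^{-1})_{\frac{1}{\omega}Z}^{(1,0)}\circ\mathcal{N}_{b}(g^{*})$, and by $T$-periodicity of $Z$ together with restricting $\frac{1}{\omega}$ to a bounded interval $[0,\hat{\epsilon}]$, the union of these images over $\tau$ and $\frac{1}{\omega}$ reduces to a union over compact parameter ranges and is therefore itself compact (see (\ref{koonii})). Only on that compact set can one assert the uniform $O\left(\frac{1}{\omega}\right)$ bound and then proceed verbatim as in Theorem \ref{t1}. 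So either reproduce this pull-back--image compactness argument, or replace it with an explicit bootstrapping argument showing the full trajectory cannot exit $\overline{V}$ before the closeness estimate takes effect; as written, your forward-invariance step assumes what it needs to prove.
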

	
	\begin{proof}
	See Appendix \ref{AK}.
	\end{proof}

	\section{Example on $SO(3)$}
	\label{s4}
	In this section we give a conceptual example  for  orientation control which is modeled by elements of $SO(3)$ which is a compact Lie group. 

We recall that $SO(3)$ is the rotation group in $\mathds{R}^{3}$ given by 
\EQ SO(3)= \big \{g\in GL(3)| \quad g.g^{T}=I,\hspace{.1cm} det(g)=1\big\},\nnum\EN 
where $GL(n)$ is the set of nonsingular $n\times n$ matrices.
The Lie algebra of $SO(3)$ which is denoted by $so(3)$ is given by (see \cite{Varad}) $so(3)=\big\{     X\in M(3)|\quad X+X^{T}=0\big\},$
where $M(n)$ is the space of all $n\times n$ matrices. The Lie group operation $\star$ is given by the matrix multiplication and consequently $T_{g_{1}}g_{2}$ is also given by the matrix multiplication $g_{2}X,\hspace{.2cm} X\in T_{g_{1}}G$.

A left invariant dynamical system on $SO(3)$ is given by 
\EQ \label{dd}\dot{g}(t)=gX,\quad g(0)=g_{0},\hspace{.2cm} X\in so(3).\EN
The Lie algebra bilinear operator is defined as the commuter of matrices, i.e. 
\EQ [X,Y]=XY-YX,\quad X,Y\in so(3).\nnum\EN 

Equation (\ref{dd})  above is written as
\EQ &&\left( \begin{array}{ll} \dot{g_{11}} \quad \dot{g_{12}} \quad \dot{g_{13}}\\\dot{g_{21}}\quad \dot{g_{22}}\quad \dot{g_{23}}\\\dot{g_{31}}\quad\dot{g_{32}}\quad \dot{g_{33}}
        \end{array}\right)=\left( \begin{array}{ll} g_{11} \quad g_{12} \quad g_{13}\\g_{21}\quad g_{22}\quad g_{23}\\g_{31}\quad g_{32}\quad g_{33}
        \end{array}\right)\left( \begin{array}{ll} 0 \quad \hspace{.5cm}X_{1}(t) \quad X_{3}(t)\\-X_{1}(t)\quad 0\quad\hspace{.3cm} X_{2}(t)\\-X_{3}(t)\quad \hspace{-.2cm}-X_{2}(t)\quad 0
        \end{array}\right).\nnum\EN

The optimization is performed for the cost function $J:SO(3)\rightarrow \mathds{R}$ defined by
\EQ J(g)=\frac{1}{2}tr((g-g^{*})^{T}(g-g^{*})),\nnum\EN
where $g^{*}$ is the optimal orientation matrix in $SO(3)$. We assume $g^{*}=I_{3\times 3}$, hence, $J=3-tr(g)$.
The Lie algebra $so(3)$ is spanned by $\frac{\partial}{\partial g_{1}}=\left( \begin{array}{ll} 0 \quad\hspace{.3cm} 1 \quad 0\\-1\quad 0\quad 0\\0\quad\hspace{.3cm} 0\quad 0
        \end{array}\right), \frac{\partial}{\partial g_{2}}=\left( \begin{array}{ll} 0 \quad\hspace{.3cm} 0 \quad 0\\0\quad\hspace{.3cm} 0\quad 1\\0\quad -1\quad 0
        \end{array}\right) \mbox{and}\hspace{.2cm} \frac{\partial}{\partial g_{3}}=\left( \begin{array}{ll} 0 \quad\hspace{.3cm} 0 \quad 1\\0\quad\hspace{.3cm} 0\quad 0\\-1\quad 0\quad 0
        \end{array}\right)$. For this example the dither vector $X(e)$ at the Lie algebra $so(3)$ is given by
				\EQ X(e)&&=\sum^{3}_{i=1}a_{i}\sin(\omega_{i}t)\frac{\partial}{\partial g_{i}}=\left( \begin{array}{ll} \hspace{1cm}0 \quad\hspace{.3cm} a_{1}\sin(\omega_{1}t) \quad a_{3}\sin(\omega_{3}t)\\-a_{1}\sin(\omega_{1}t)\quad \hspace{.5cm}0\quad  \hspace{.5cm}a_{2}\sin(\omega_{2}t)\\-a_{3}\sin(\omega_{3}t)\quad\hspace{-.3cm} -a_{2}\sin(\omega_{2}t)\quad 0
        \end{array}\right),\nnum\EN
				hence, the dither vector field is given by
				\EQ X(g)=g\cdot \left( \begin{array}{ll} \hspace{1cm}0 \quad\hspace{.3cm} a_{1}\sin(\omega_{1}t) \quad a_{3}\sin(\omega_{3}t)\\-a_{1}\sin(\omega_{1}t)\quad \hspace{.5cm}0\quad  \hspace{.5cm}a_{2}\sin(\omega_{2}t)\\-a_{3}\sin(\omega_{3}t)\quad\hspace{-.3cm} -a_{2}\sin(\omega_{2}t)\quad 0
        \end{array}\right),\nnum\EN
				where $g\in SO(3)$.

The extremum seeking vector field given in (\ref{kkg}) is presented on $SO(3)$  by 
\EQ\label{ll} &&\hspace{-1cm}f(g,t)\doteq -\sum^{3}_{i=1}a_{i}\sin(\omega_{i}t)J(g\exp\sum^{3}_{j=1}a_{j}\sin(\omega_{j}t)\frac{\partial}{\partial g_{j}})g\frac{\partial}{\partial g_{i}},\EN
where by Lemma \ref{lg} the exponential map on $SO(3)$ is a geodesic ($SO(3)$ is compact). 

Note that in (\ref{ll})  $g\frac{\partial}{\partial g_{i}}\in T_{g}SO(3)$.
  The optimizing trajectory $g(\cdot)$ is a solution of the time dependent differential equation 
  \EQ \dot{g}(t)=f(g,t)\in T_{g}SO(3). \nnum\EN
	The algorithms initiates from the initial orientation at $g_{0}=\left( \begin{array}{ll}\cos(\frac{\pi}{4}) \quad\hspace{-.3cm} -\sin(\frac{\pi}{4}) \quad 0\\\sin(\frac{\pi}{4})\quad \cos(\frac{\pi}{4})\quad \hspace{.05cm} 0\\0\quad\hspace{1cm}0\quad\hspace{.7cm} 1
        \end{array}\right)\in SO(3)$. The amplitudes and frequencies are set at $a_{1}=a_{2}=a_{3}=.1$ and $\omega_{1}=2,\omega_{2}=4.1,\omega_{3}=6.2$. Figure \ref{18}  shows the convergence of the cost function and the state trajectory $g(t)=\left( \begin{array}{ll}g_{11}(2) \quad g_{12}(t) \quad g_{13}(t)\\g_{21}(t)\quad g_{22}(t)\quad g_{23}(t)\\ g_{31}(t)\quad g_{32}(t)\quad g_{33}(t)\end{array}\right)\in SO(3)$.

\begin{figure}
	\begin{minipage}[b]{0.45\linewidth}
	
\hspace*{0cm}\includegraphics[scale=.3]{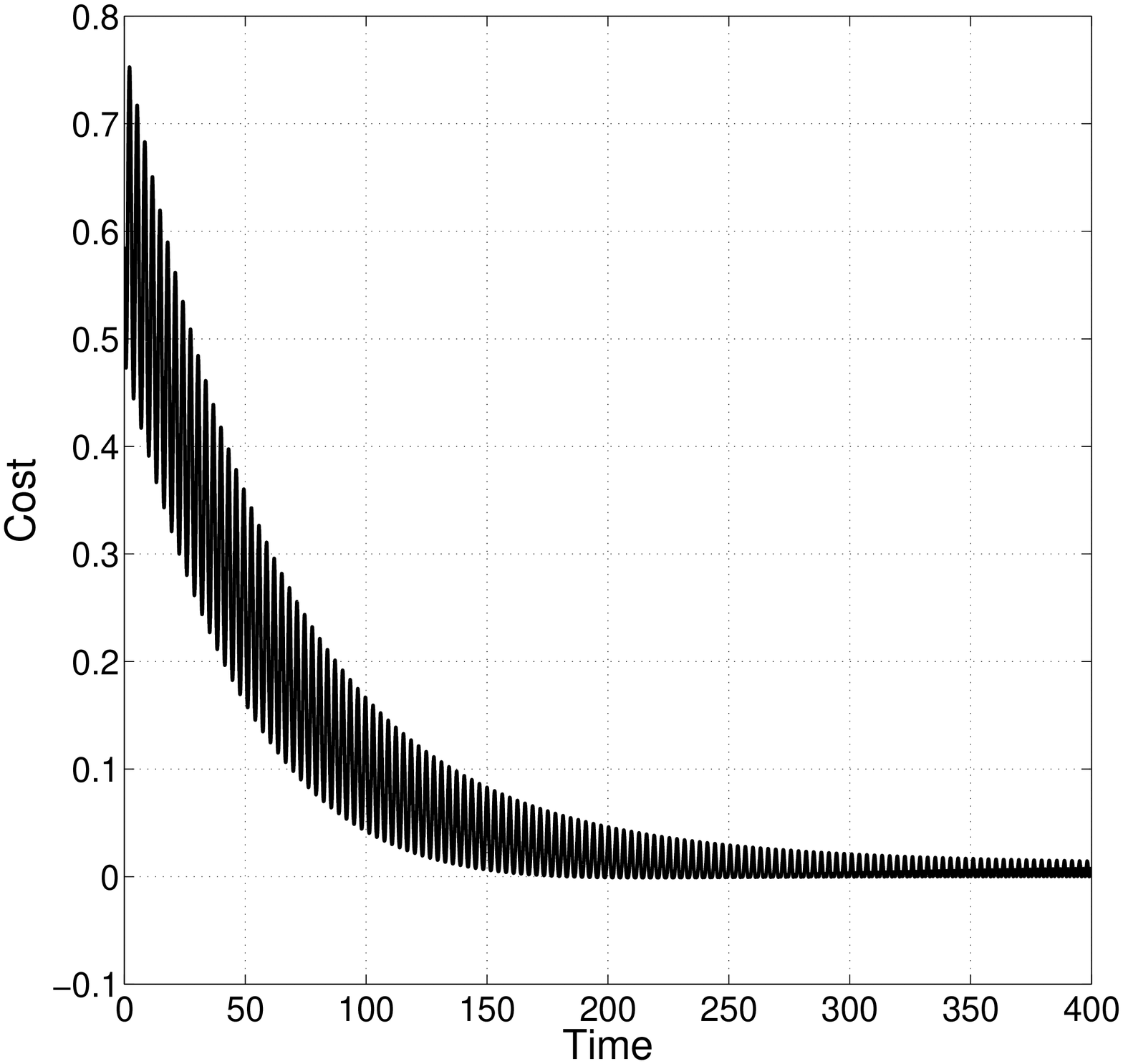}

\end{minipage}
\begin{minipage}[b]{0.45\linewidth}
	
\hspace*{0.2cm}\includegraphics[scale=.3]{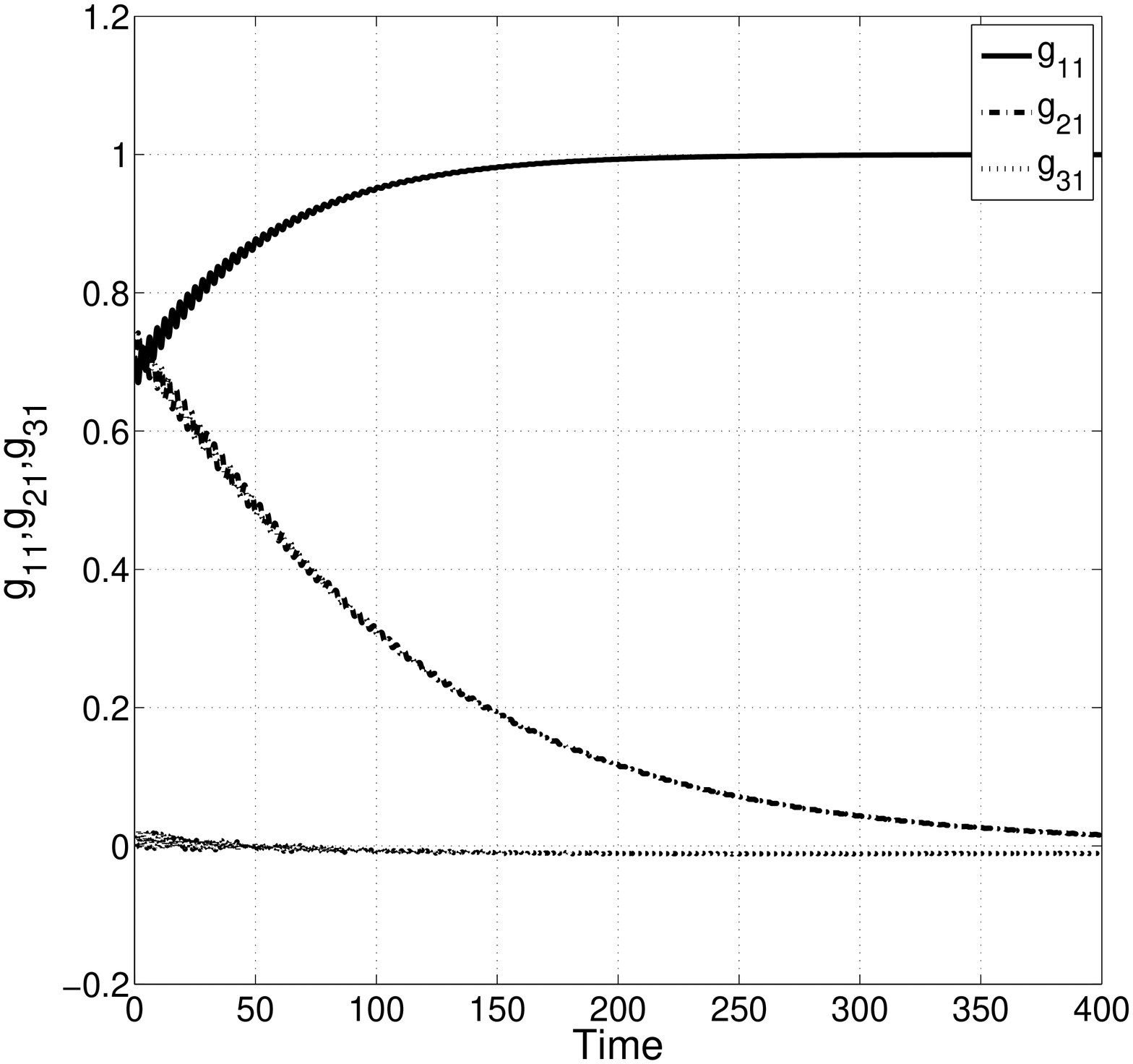}

\end{minipage}
\begin{minipage}[b]{0.45\linewidth}
	
\hspace*{0cm}\includegraphics[scale=.3]{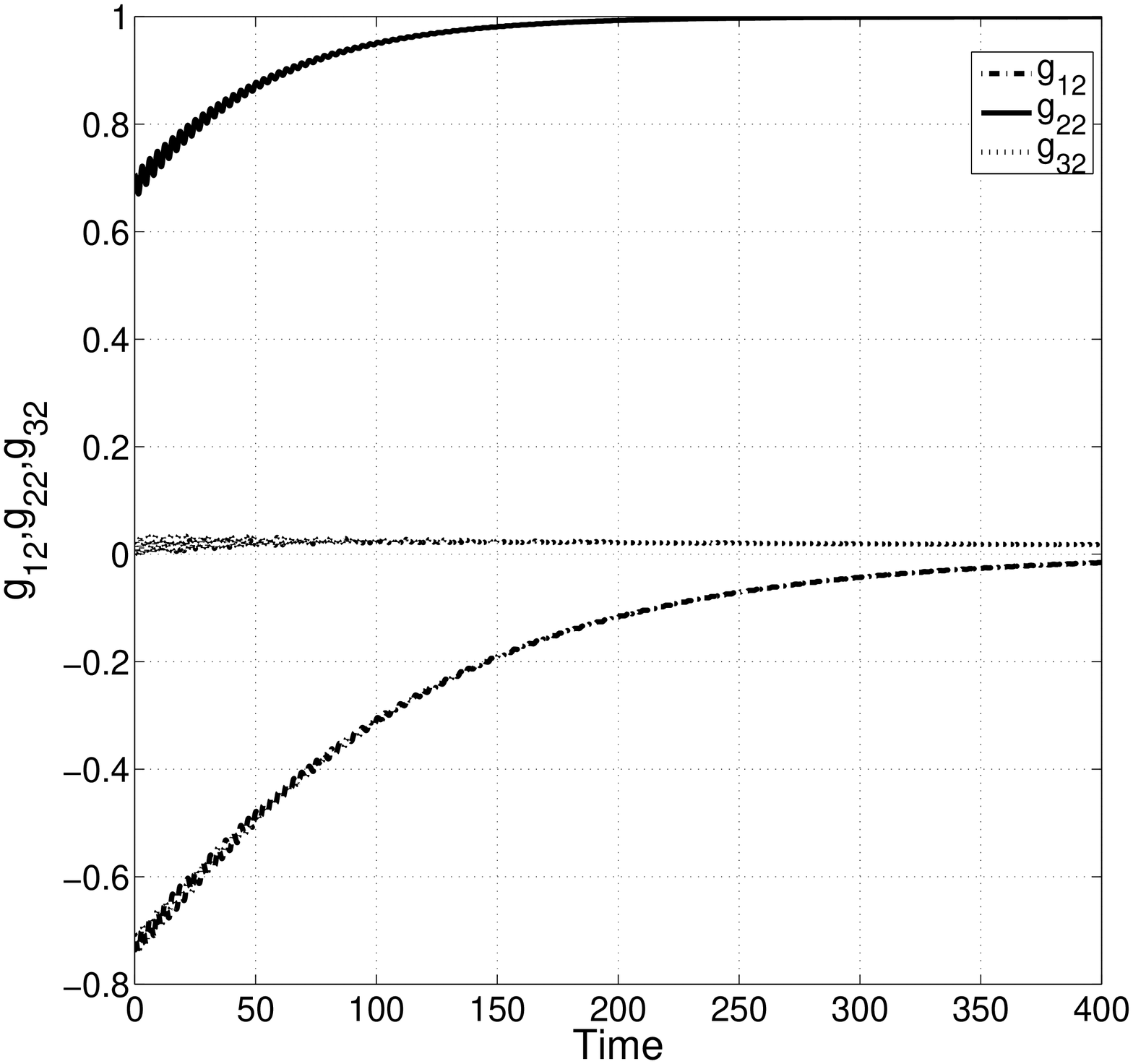}

\end{minipage}
\begin{minipage}[b]{0.45\linewidth}
	
\hspace*{1.5cm}\includegraphics[scale=.3]{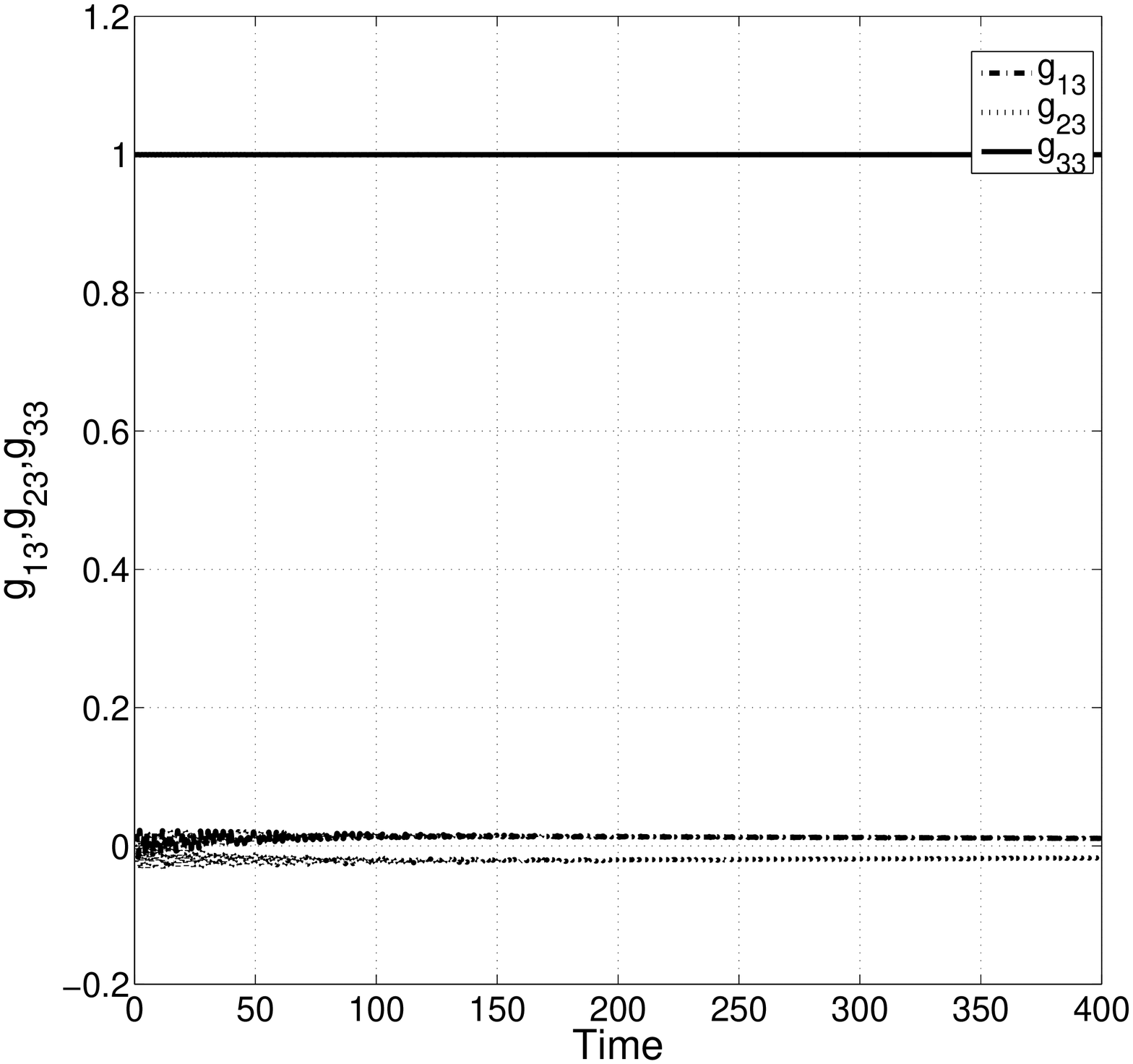}

\end{minipage}
\label{18}
\caption{Cost and state convergence on $SO(3)$. }
\end{figure}
   
As is obvious, the optimal solution for the optimization problem is $g^{*}=I_{3\times 3}$. The algorithm converges to $g=\left( \begin{array}{ll}.999 \quad-0.0159     \quad 0.0110\\0.0161   \quad  0.9998   \quad -0.0174\\-0.0108    \quad0.0175   \quad 0.9998
        \end{array}\right)\in SO(3)$.

\section{Example on $SE(3)$}
\label{s5}
	In this section we give another conceptual example  for an orientation control on $SE(3)$ which is not compact. 

As is known, $SE(3)$ is the space of rotation and translation which is used for robotic modeling. We have
\EQ SE(3)=\left\{\left( \begin{array}{ll} g_{SO(3)} \hspace{.3cm} g_{\mathds{R}} \\ 0_{1\times 3}\hspace{.6cm}  1
        \end{array}\right)\in \mathds{R}^{4\times 4}| \quad g_{SO(3)}\in SO(3), g_{\mathds{R}}\in \mathds{R}^{3\times 1}\right\},\nnum\EN
				where $g_{SO(3)}$ models the rotation and $g_{\mathds{R}}$ models the translation in $\mathds{R}^{3}$.
The Lie algebra of $SE(3)$ which is denoted by $se(3)$ is given by (see \cite{Varad})
 \EQ se(3)=\left\{  \left( \begin{array}{ll} S \hspace{.7cm} v \\ 0_{1\times 3}\hspace{.2cm}  0
        \end{array}\right)\in \mathds{R}^{4\times 4}|\quad S\in so(3), v\in \mathds{R}^{3}\right\},\nnum\EN

Let us consider the  cost function $J:SE(3)\rightarrow \mathds{R}$ as 
\EQ J(g)&=&\frac{1}{2}tr((g_{SO(3)}-g^{*}_{SO(3)})^{T}(g_{SO(3)}-g^{*}_{SO(3)}))+\frac{1}{2}||g_{\mathds{R}}-r^{*}||_{\mathds{R}^{3}}^{2},\nnum\EN

where $g^{*}_{SO(3)}$ is the optimal orientation matrix in $SO(3)$ and $r^{*}$ is the optimal distance from the origin in $\mathds{R}^{3}$. As is obvious the optimal solution for the optimization problem above is $\left(\begin{array}{ll} g^{*}_{SO(3)} \hspace{.3cm} r^{*} \\ 0_{1\times 3}\hspace{.5cm}  1\end{array}\right)\in SE(3)$. The cost function above minimizes the distance from the orientation $g^{*}_{SO(3)}$ and distance from $r^{*}$. Without loss of generality, we assume $g^{*}_{SO(3)}=I_{3\times 3}\in SO(3)$ and $r^{*}=(0,0,0)\in \mathds{R}^{3}$.

The Lie algebra $se(3)$ is spanned by\\ $\frac{\partial}{\partial g_{1}}=\left( \begin{array}{ll} 0 \quad 1 \quad 0 \quad 0\\\hspace{-.25cm}-1\quad 0\quad 0\quad 0\\0\quad 0\quad 0\quad 0\\0\quad 0\quad 0\quad0
        \end{array}\right), \frac{\partial}{\partial g_{2}}=\left( \begin{array}{ll} 0 \quad 0 \quad 1 \quad 0\\0\quad 0\quad 0\quad 0\\\hspace{-.25cm}-1\quad 0\quad 0\quad 0\\0\quad 0\quad 0\quad0
        \end{array}\right),\frac{\partial}{\partial g_{3}}=\left( \begin{array}{ll} 0 \quad 0 \quad 0 \quad 0\\0\quad 0\quad 1\quad 0\\0\quad \hspace{-.3cm}-1\quad 0\quad 0\\0\quad 0\quad 0\quad0
        \end{array}\right),\frac{\partial}{\partial g_{4}}=\left( \begin{array}{ll} 0 \quad 0 \quad 0 \quad 1\\0\quad 0\quad 0\quad 0\\0\quad 0\quad 0\quad 0\\0\quad 0\quad 0\quad0
        \end{array}\right), \frac{\partial}{\partial g_{5}}=\left( \begin{array}{ll} 0 \quad 0 \quad 0 \quad 0\\0\quad 0\quad 0\quad 1\\0\quad 0\quad 0\quad 0\\0\quad 0\quad 0\quad0
        \end{array}\right) \mbox{and}\hspace{.2cm} \frac{\partial}{\partial g_{6}}=\left( \begin{array}{ll} 0 \quad 0 \quad 0 \quad 0\\0\quad 0\quad 0\quad 0\\0\quad 0\quad 0\quad 1\\0\quad 0\quad 0\quad0
        \end{array}\right)$. For this example the dither vector $X(e)$ at the Lie algebra $se(3)$ is given by
				\EQ X(e)&&=\sum^{6}_{i=1}a_{i}\sin(\omega_{i}t)\frac{\partial}{\partial g_{i}}=\left( \begin{array}{ll} \hspace{1cm}0 \quad\hspace{.3cm} a_{1}\sin(\omega_{1}t) \quad a_{3}\sin(\omega_{3}t)\quad a_{4}\sin(\omega_{4}t)\\-a_{1}\sin(\omega_{1}t)\quad \hspace{.5cm}0\quad  \hspace{.5cm}a_{2}\sin(\omega_{2}t)\quad a_{5}\sin(\omega_{5}t)\\-a_{3}\sin(\omega_{3}t)\quad\hspace{-.3cm} -a_{2}\sin(\omega_{2}t)\quad 0\quad\hspace{.9cm} a_{6}\sin(\omega_{6}t)\\\hspace{1cm}0\quad\hspace{1.5cm} 0\quad\hspace{1cm} 0\quad \hspace{1.5cm}0
        \end{array}\right),\nnum\\\EN
				hence, the dither vector field is given by $X(g)=g\cdot X(e)$, where $g\in SE(3)$.

Similar to the example on $SO(3)$, the extremum seeking vector field on $SE(3)$ is given by the following vector field
\EQ &&\hspace{-1cm}f(g,t)\doteq -\sum^{6}_{i=1}a_{i}\sin(\omega_{i}t)J(g\exp\sum^{6}_{j=1}a_{j}\sin(\omega_{j}t)\frac{\partial}{\partial g_{j}})g\frac{\partial}{\partial g_{i}},\nnum\EN
where $\exp$ is the exponential operator defined on $SE(3)$. On $SE(3)$ the exponential map does not coincide with geodesics since $SE(3)$ does not admit a bi-invariant Riemannian metric, see \cite{pen}. Hence, the results of Theorem \ref{tt11} grantees the local convergence of the algorithm. 

 In this case, the $\exp$ operator is not the same as the $\exp$ operator on $SO(3)$. For a tangent vector $\left( \begin{array}{ll} S \hspace{.7cm} v \\ 0_{1\times 3}\hspace{.2cm}  0\end{array}\right)\in se(3)$, where $S=\left( \begin{array}{ll}\hspace{.25cm} 0 \quad a \quad b \\-a\quad 0\quad c\\-b\quad \hspace{-.25cm}-c\quad 0\end{array}\right)$, we have $\exp(\left( \begin{array}{ll} S \hspace{.7cm} v \\ 0_{1\times 3}\hspace{.2cm}  0\end{array}\right))=\left( \begin{array}{ll} \exp(S) \hspace{.1cm} Av \\ 0_{1\times 3}\hspace{.6cm}  1\end{array}\right)$,
where $A=I_{3\times 3}+\frac{(1-\cos(\theta))}{\theta^{2}}S+\frac{(\theta-\sin(\theta))}{\theta^{3}}S^{2}$, and $\theta=\sqrt{a^{2}+b^{2}+c^{2}}$. In the case that $\theta=0$, we have $\exp(\left( \begin{array}{ll} S \hspace{.7cm} v \\ 0_{1\times 3}\hspace{.2cm}  0\end{array}\right))=\left( \begin{array}{ll} \exp(S) \hspace{.2cm} v \\ 0_{1\times 3}\hspace{.6cm}  1\end{array}\right)$.

  The optimizing trajectory $g(\cdot)$ is a solution of the time dependent differential equation 
  \EQ \dot{g}(t)=f(g,t)\in T_{g}SE(3).\nnum \EN
	The algorithm initiates from the initial orientation at \\$g_{SO(3)}(0)=\left( \begin{array}{ll}\cos(\frac{\pi}{4}) \quad\hspace{-.3cm} -\sin(\frac{\pi}{4}) \quad 0\\\sin(\frac{\pi}{4})\quad \cos(\frac{\pi}{4})\quad \hspace{.05cm} 0\\0\quad\hspace{1cm}0\quad\hspace{.7cm} 1
        \end{array}\right)\in SO(3)$ and $g_{\mathds{R}}=(1,-1,2)\in \mathds{R}^{3}$. The amplitudes and frequencies are set at $a_{1}=\cdots=a_{6}=.1$ and $\omega_{i}=2i+\epsilon_{i},\hspace{.2cm}i=1,\cdots,6$.

				\begin{figure}
\begin{minipage}[b]{0.45\linewidth}
	
\hspace*{0cm}\includegraphics[scale=.3]{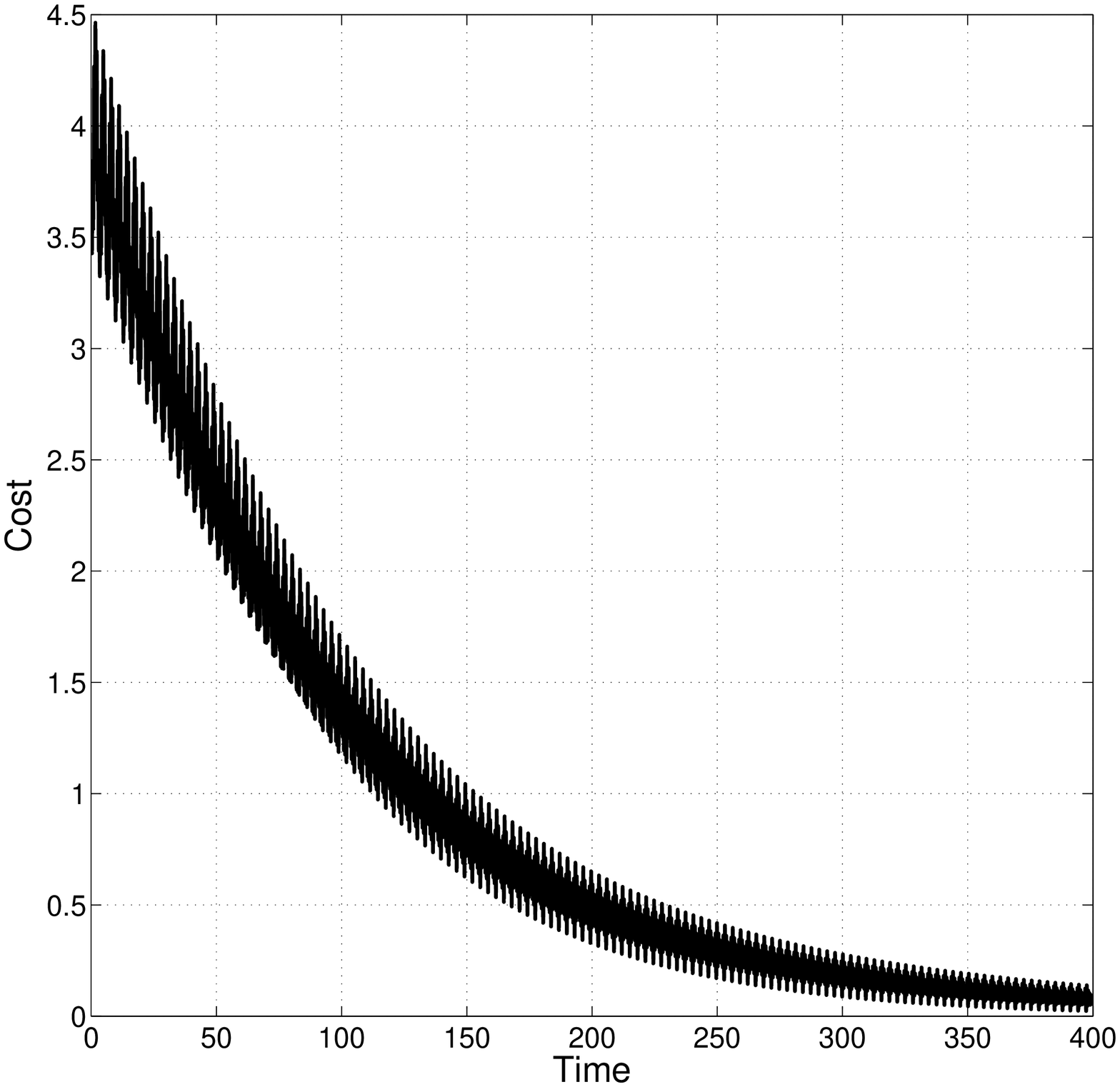}

\end{minipage}
\begin{minipage}[b]{0.45\linewidth}
	
\hspace*{1.5cm}\includegraphics[scale=.3]{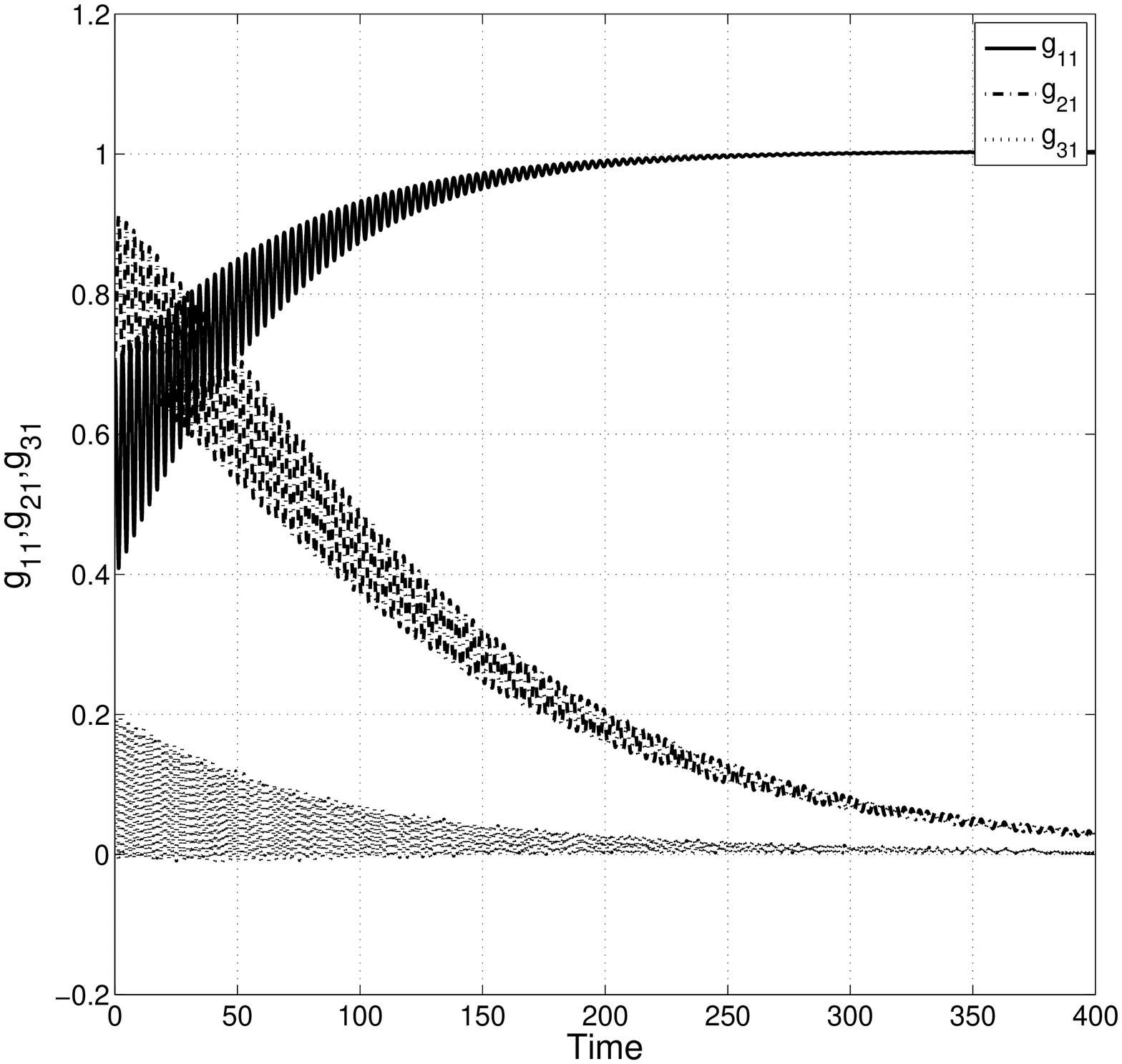}

\end{minipage}

\begin{minipage}[b]{0.45\linewidth}
	
\hspace*{4cm}\includegraphics[scale=.3]{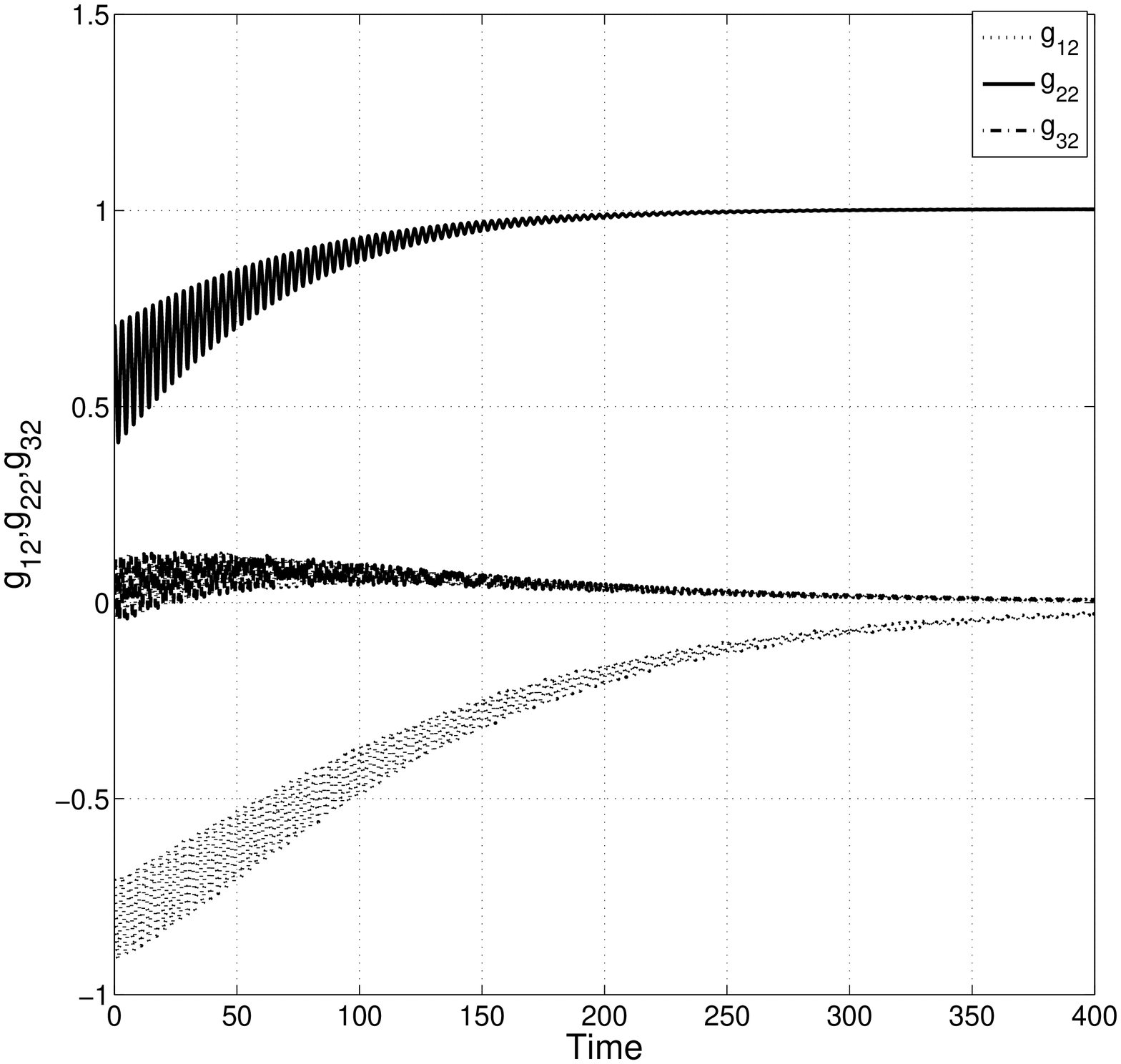}

\end{minipage}
 
 \caption{Cost and state convergence on $SE(3)$. }
     \label{33}
  \end{figure}
				
		\begin{figure}
\begin{minipage}[b]{0.45\linewidth}
	
\hspace*{0cm}\includegraphics[scale=.3]{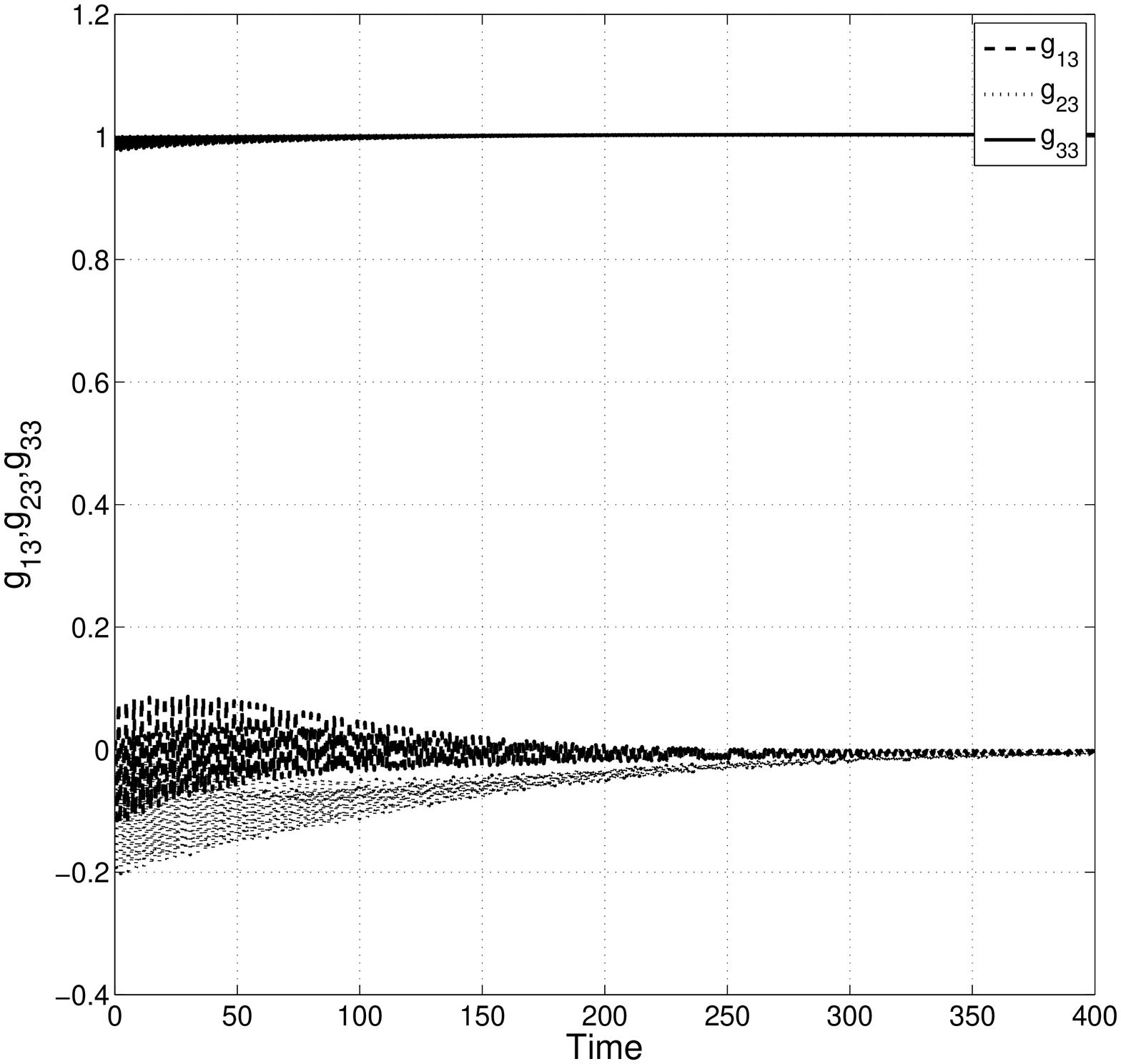}

\end{minipage}
\begin{minipage}[b]{0.45\linewidth}
	
\hspace*{1.5cm}\includegraphics[scale=.3]{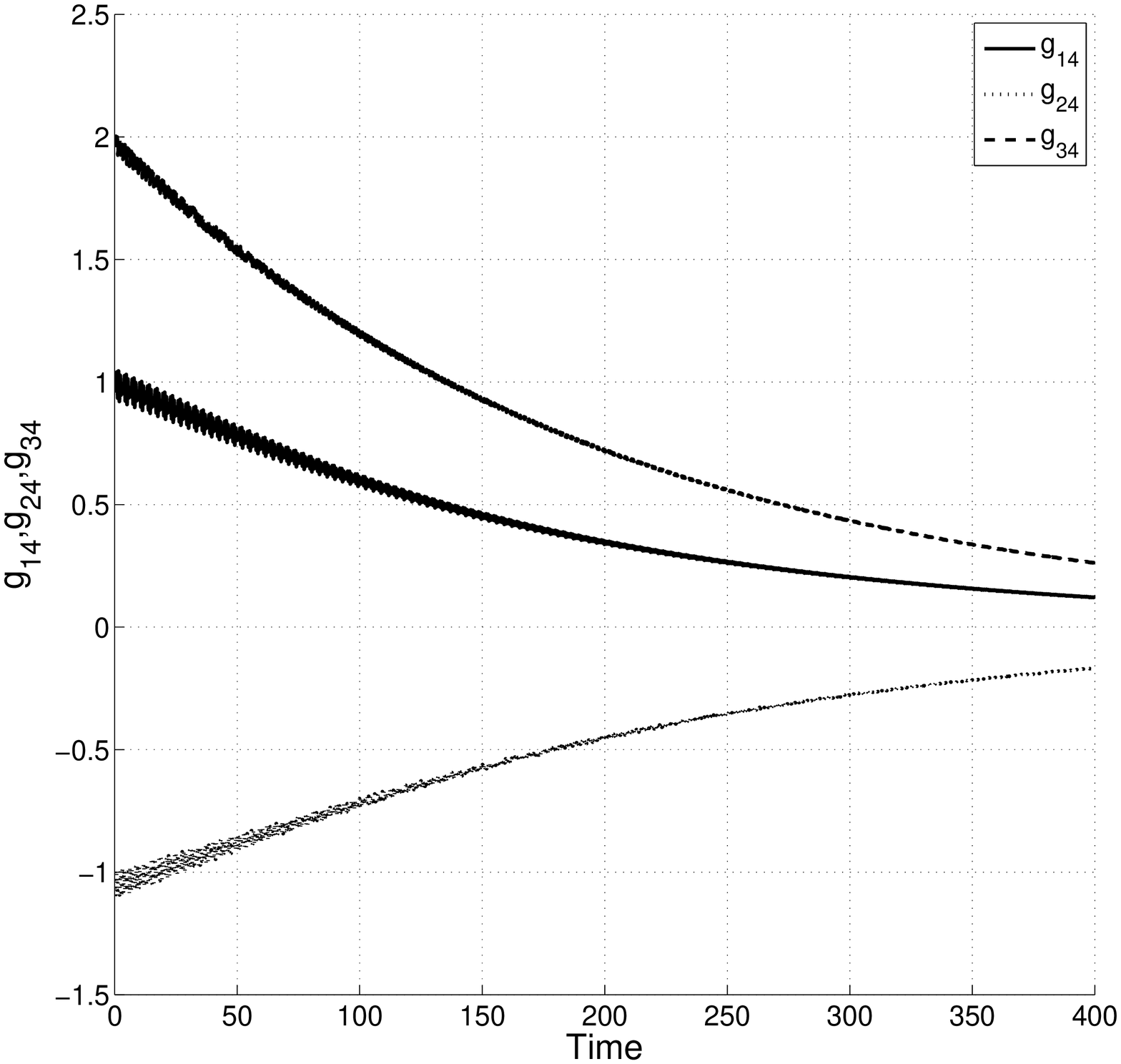}

\end{minipage}

\begin{minipage}[b]{0.45\linewidth}
	
\hspace*{5cm}\includegraphics[scale=.3]{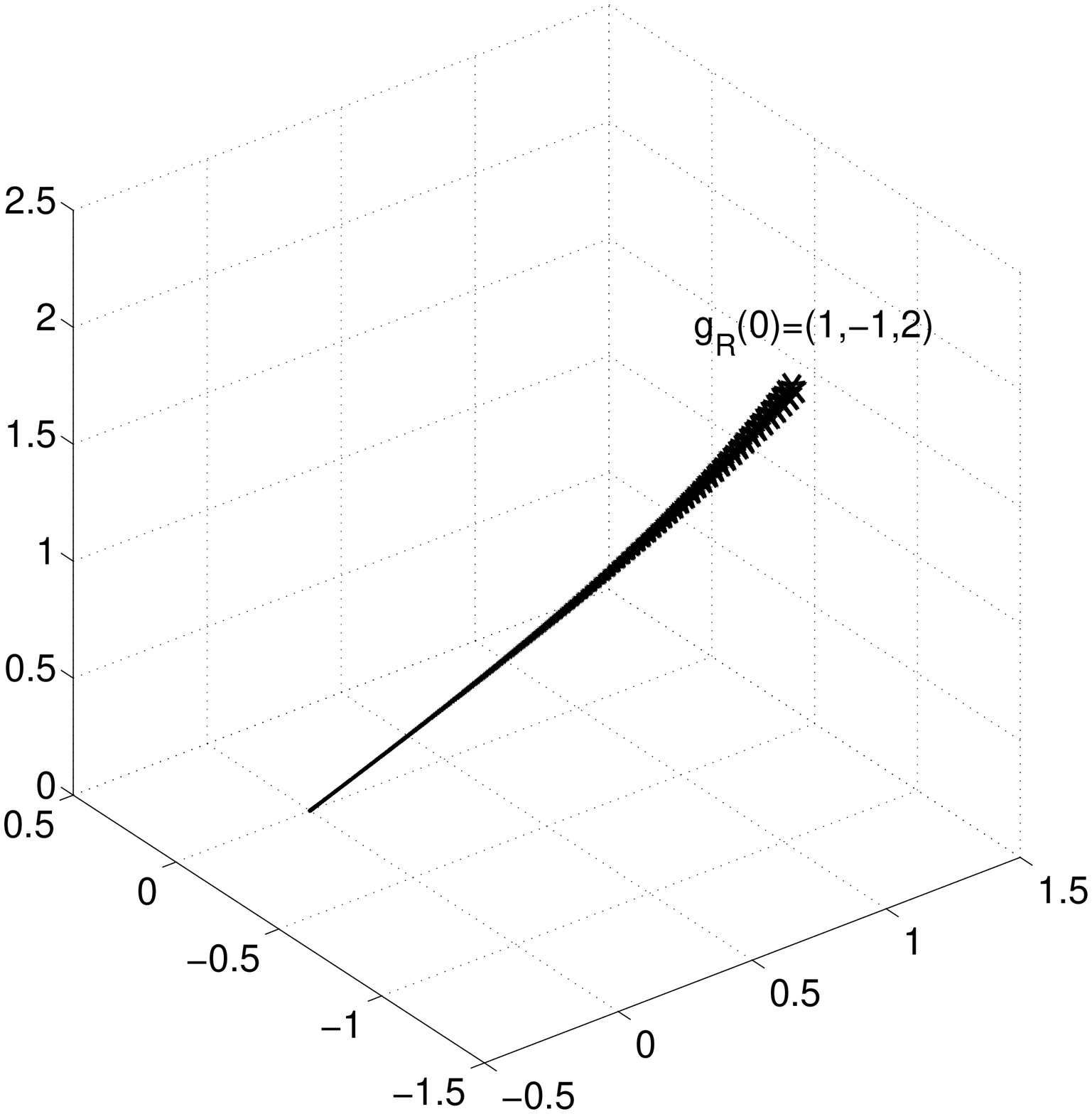}

\end{minipage}
 \caption{ State convergence on $SE(3)$. }
     \label{34}
     
  \end{figure}
	The results for the convergence of the cost function and the state trajectory $g(t)=\left( \begin{array}{ll}g_{11}(t) \quad g_{12}(t) \quad g_{13}(t)\quad g_{14}(t)\\g_{21}(t)\quad g_{22}(t)\quad g_{23}(t)\quad g_{24}(t)\\ g_{31}(t)\quad g_{32}(t)\quad g_{33}(t)\quad g_{34}(t)\\0\hspace{1cm}\quad 0\hspace{.7cm}\quad 0\hspace{.6cm}\quad 1\end{array}\right)\in SE(3)$ are shown in Figures \ref{33} and \ref{34}.
				\section{Conclusion}
	In this paper we extended the standard extremum seeking algorithms developed for online optimization to a class of online algorithms for optimization on Riemannian manifolds. We introduced the notion of geodesic dithers for extremum seeking algorithms on Riemannian manifolds and employed the results of averaging on manifolds to obtain a local convergence of the extremum seeking loop to a local optimizer on Riemannian manifolds. Two examples on Lie groups were presented to illustrate the efficacy of the proposed algorithm.  
		
	\appendix
	\section{Proof of Lemma \ref{kir} }
	\label{A0}
		The cost function $J$ may be expanded along geodesics by using the Taylor expansion on Riemannian manifolds, see \cite{smith}. We employ Lemma \ref{l1} to guarantee that there exist $a_{i}>0,\hspace{.2cm}i=1,\cdots,n$, such that\\ $\exp_{x}\left(\sum^{n}_{i=1}a_{i}\sin(\omega_{i}t)\frac{\partial}{\partial x_{i}}\right)\in \iota(x)$. Then the Taylor expansion of $J$ at $x\in M$ along the geodesic $\exp_{x}\left(\eta X\right)$, where $X\in T_{x}M$, is given by (see \cite{smith})
  \EQ \label{tay}&&\hspace{-.5cm}J(\exp_{x}\eta X)=J(x)+\eta (\nabla_{X}J)(x)+...+\frac{\eta^{m-1}}{(m-1)!}\times\nnum\\&&\hspace{0.5cm}(\nabla^{m-1}_{X}J)(x)+\frac{\eta^{m}}{(m-1)!}\int^{1}_{0}(1-s)^{m-1}\nabla^{m}_{X}J(\exp_{x}\left(s\eta X\right))ds, 0<\eta<\eta^{*},\EN
which is equivalent to  
 \EQ \label{tay2}&&\hspace{-.5cm}J(\exp_{x}\eta X)=J(x)+\eta (dJ(X))|_{x}+...+\frac{\eta^{m-1}}{(m-1)!}\times(\nabla^{m-2}_{X}dJ)(X)|_{x}+\nnum\\&&\hspace{-.5cm}\frac{\eta^{m}}{(m-1)!}\int^{1}_{0}(1-s)^{m-1}(\nabla^{m-1}_{X}dJ)(X)(\exp_{x}\left(s\eta X\right))ds,\quad 0<\eta<\eta^{*},\nnum\\\EN
 where $dJ:TM\rightarrow \mathds{R}$ is a differential form of $J$, $\eta^{*}$ is the upper existence limit for geodesics on $M$ and $\nabla$ is the \textit{Levi-Civita} connection, see \cite{Lee3}. Note that for compact manifolds $\eta^{*}=\infty$. The expansion above along the geodesic dithers in (\ref{gd}) is 
 \EQ &&J\left(\exp_{x}\left(\sum^{n}_{i=1}a_{i}\sin(\omega_{i}t)\frac{\partial}{\partial x_{i}}\right)\right)=J(x)+(\nabla_{\sum^{n}_{i=1}a_{i}\sin(\omega_{i}t)\frac{\partial}{\partial x_{i}}}J)(x)+\cdots+\nnum\\&&\frac{1}{(m-1)!}\times(\nabla^{m-1}_{\sum^{n}_{i=1}a_{i}\sin(\omega_{i}t)\frac{\partial}{\partial x_{i}}}J)(x)+\frac{1}{(m-1)!}\times\int^{1}_{0}(1-s)^{m-1}\nabla^{m}_{\sum^{n}_{i=1}a_{i}\sin(\omega_{i}t)\frac{\partial}{\partial x_{i}}}\nnum\\&&J(\exp_{x}s \sum^{n}_{i=1}a_{i}\sin(\omega_{i}t)\frac{\partial}{\partial x_{i}})ds.\nnum\EN
 Linear properties of $\nabla$ imply  that (see \cite{Lee3})
 \EQ \nabla_{\sum^{n}_{i=1}a_{i}\sin(\omega_{i}t)\frac{\partial}{\partial x_{i}}}J(x)=\sum^{n}_{i=1}a_{i}\sin(\omega_{i}t)\nabla_{\frac{\partial}{\partial x_{i}}}J(x),\nnum\EN
 and iteratively we have
 \EQ &&\nabla^{m}_{\sum^{n}_{i=1}a_{i}\sin(\omega_{i}t)\frac{\partial}{\partial x_{i}}}J(x)=\sum^{n}_{i=1}a_{i}\sin(\omega_{i}t)\nabla_{\frac{\partial}{\partial x_{i}}}\big(\nabla^{m-1}_{\sum^{n}_{j=1}a_{j}\sin(\omega_{j}t)\frac{\partial}{\partial x_{j}}}J\big)(x).\nnum\EN
We drop the notation $\hat{}$ for the state trajectory in (\ref{kk}) and the dynamical equations for the extremum seeking feedback loop are given in $x$ coordinates as follows:  
\EQ \label{koonkir}&&\dot{x}(t)=-\Big(\sum^{n}_{i=1}a_{i}\sin(\omega_{i}t)J(x)\frac{\partial}{\partial x_{i}}+\sum^{n}_{i,j=1}a_{i}a_{j}\sin(\omega_{i}t)\sin(\omega_{j}t)\nabla_{\frac{\partial}{\partial x_{j}}}J(x)\frac{\partial}{\partial x_{i}}+\cdots+\nnum\\&&\frac{1}{(m-1)!}\sum^{n}_{i,j=1}a_{i}a_{j}\sin(\omega_{i}t)\sin(\omega_{j}t)\times\nabla_{\frac{\partial}{\partial x_{j}}}\big(\nabla^{m-2}_{\sum^{n}_{l=1}a_{l}\sin(\omega_{l}t)\frac{\partial}{\partial x_{l}}}J\big)(x)\frac{\partial}{\partial x_{i}}+\nnum\\&&\frac{1}{(m-1)!}\sum^{n}_{i=1}a_{i}\sin(\omega_{i}t)\Big(\int^{1}_{0}(1-s)^{m-1}\nabla^{m}_{\sum^{n}_{j=1}a_{j}\sin(\omega_{j}t)\frac{\partial}{\partial x_{j}}}\nnum\\&&J\left(\exp_{x}\left(s \sum^{n}_{j=1}a_{j}\sin(\omega_{j}t)\frac{\partial}{\partial x_{j}}\right)\right)ds\Big)\frac{\partial}{\partial x_{i}}\Big).\nnum\\\EN 
 Denote the new time scale by $\tau\doteq\omega t$, then (\ref{koonkir}) is a $\tau$ varying vector field on $(M,g^{M})$ which is periodic with respect to $\tau$. The dynamical system (\ref{koonkir}) in $\tau$ scale is  given by
  \EQ &&\frac{dx}{d\tau}=-\frac{1}{\omega}\Big(\sum^{n}_{i=1}a_{i}\sin(\bar{\omega}_{i}\tau)J(x)\frac{\partial}{\partial x_{i}}+\sum^{n}_{i,j=1}a_{i}a_{j}\sin(\bar{\omega}_{i}\tau)\sin(\bar{\omega}_{j}\tau)\nabla_{\frac{\partial}{\partial x_{j}}}J(x)\frac{\partial}{\partial x_{i}}+\cdots+\nnum\\&&\frac{1}{(m-1)!}\sum^{n}_{i,j=1}a_{i}a_{j}\sin(\bar{\omega}_{i}\tau)\sin(\bar{\omega}_{j}\tau)\times \nabla_{\frac{\partial}{\partial x_{j}}}\big(\nabla^{m-2}_{\sum^{n}_{l=1}a_{l}\sin(\bar{\omega}_{l}\tau)\frac{\partial}{\partial x_{l}}}J\big)(x)\frac{\partial}{\partial x_{i}}+\nnum\\&&\frac{1}{(m-1)!}\sum^{n}_{i=1}a_{i}\sin(\bar{\omega}_{i}\tau)\Big(\int^{1}_{0}(1-s)^{m-1}\nabla^{m}_{\sum^{n}_{j=1}a_{j}\sin(\bar{\omega}_{j}\tau)\frac{\partial}{\partial x_{j}}}\nnum\\&&J\left(\exp_{x}\left(s \sum^{n}_{j=1}a_{j}\sin(\bar{\omega}_{j}\tau)\frac{\partial}{\partial x_{j}}\right)\right)ds\Big)\frac{\partial}{\partial x_{i}}\Big).\nnum\EN
 Let $T$ denote the least common multiplier of the periods of $\sin(\bar{\omega_{i}}\tau),\hspace{.2cm}i=1,\cdots,n$. 
 The average dynamical system is then given as
 \EQ \label{jj}&&\frac{dx}{d\tau}=\frac{1}{T}\int^{T}_{0}-\frac{1}{\omega}\Big(\sum^{n}_{i=1}a_{i}\sin(\bar{\omega}_{i}\tau)J(x)\frac{\partial}{\partial x_{i}}+\hspace{-.3cm}\sum^{n}_{i,j=1}a_{i}a_{j}\sin(\bar{\omega}_{i}\tau)\sin(\bar{\omega}_{j}\tau)\nabla_{\frac{\partial}{\partial x_{j}}}J(x)\frac{\partial}{\partial x_{i}}+\cdots+\nnum\\&&\frac{1}{(m-1)!}\sum^{n}_{i,j=1}a_{i}a_{j}\sin(\bar{\omega}_{i}\tau)\sin(\bar{\omega}_{j}\tau)\times \nabla_{\frac{\partial}{\partial x_{j}}}\big(\nabla^{m-2}_{\sum^{n}_{l=1}a_{l}\sin(\bar{\omega}_{l}\tau)\frac{\partial}{\partial x_{l}}}J\big)(x)\frac{\partial}{\partial x_{i}}+\nnum\\&&\frac{1}{(m-1)!}\sum^{n}_{i=1}a_{i}\sin(\bar{\omega}_{i}\tau)\Big(\int^{1}_{0}(1-s)^{m-1}\nabla^{m}_{\sum^{n}_{j=1}a_{j}\sin(\bar{\omega}_{j}\tau)\frac{\partial}{\partial x_{j}}}\nnum\\&&J\left(\exp_{x}\left(s \sum^{n}_{j=1}a_{j}\sin(\bar{\omega}_{j}\tau)\frac{\partial}{\partial x_{j}}\right)\right)ds\Big)\frac{\partial}{\partial x_{i}}\Big)d\tau,\nnum\\\EN
 Since $M$ is compact and $J$ is smooth then the higher derivatives of $J$ are all bounded above on $M$ and (\ref{jj}) is written as
 \EQ \label{ti}\frac{dx}{d\tau}=&&-\frac{1}{\omega}\sum^{n}_{i=1}\frac{a^{2}_{i}}{2}\nabla_{\frac{\partial}{\partial x_{i}}}J(x)\frac{\partial}{\partial x_{i}}+\frac{1}{\omega}\sum^{n}_{i=1}O((\max_{i\in1,\cdots,n}a_{i})^{4})\frac{\partial}{\partial x_{i}}.\EN
 The vector field $-\frac{1}{\omega}\sum^{n}_{i=1}\frac{a^{2}_{i}}{2}\nabla_{\frac{\partial}{\partial x_{i}}}J(x)\frac{\partial}{\partial x_{i}}+\frac{1}{\omega}\sum^{n}_{i=1}O((\max_{i\in1,\cdots,n}a_{i})^{4})\frac{\partial}{\partial x_{i}}$ is a perturbed version of the time invariant vector field $-\frac{1}{\omega}\sum^{n}_{i=1}\frac{a^{2}_{i}}{2}\nabla_{\frac{\partial}{\partial x_{j}}}J(x)\frac{\partial}{\partial x_{i}}$ on $(M,g^{M})$. Following (\ref{ggs}), we note that\\ $-\frac{1}{\omega}\sum^{n}_{i=1}\frac{a^{2}_{i}}{2}\nabla_{\frac{\partial}{\partial x_{j}}}J(x)\frac{\partial}{\partial x_{i}}$ is a scaled version of the gradient system presented in (\ref{ggs}). 

\section{Proof of Lemma \ref{kkir2}}
\label{A00}
Consider $J$  as the candidate Lyapunov function on $(M,g^{M})$. The variation of $J$ along $-\sum^{n}_{i=1}\frac{a^{2}_{i}}{2}\nabla_{\frac{\partial}{\partial x_{j}}}J(x)\frac{\partial}{\partial x_{i}}$ is 
 \EQ \mathcal{L}_{-\sum^{n}_{i=1}\frac{a^{2}_{i}}{2}\nabla_{\frac{\partial}{\partial x_{j}}}J(x)\frac{\partial}{\partial x_{i}}}J&&=dJ(-\sum^{n}_{i=1}\frac{a^{2}_{i}}{2}\nabla_{\frac{\partial}{\partial x_{j}}}J(x)\frac{\partial}{\partial x_{i}})\nnum\\&&=-\sum^{n}_{i=1}\frac{a^{2}_{i}}{2}dJ(\nabla_{\frac{\partial}{\partial x_{j}}}J(x)\frac{\partial}{\partial x_{i}}).\nnum\EN
 where $\mathcal{L}$ is the Lie derivative of $J$ along vector fields on $(M,g^{M})$. Locally $dJ=\sum^{n}_{i=1}\frac{\partial J}{\partial x_{i}}dx_{i}$, where $dx_{i}(\frac{\partial}{\partial x_{j}})=\delta_{i,j}$. Hence,
 \EQ \label{ly}\mathcal{L}_{-\sum^{n}_{i=1}\frac{a^{2}_{i}}{2}\nabla_{\frac{\partial}{\partial x_{j}}}J(x)\frac{\partial}{\partial x_{i}}}J=-\sum^{n}_{i=1}\frac{a^{2}_{i}}{2}(\frac{\partial J}{\partial x_{i}})^{2}\leq 0.\EN
Note that Assumption \ref{as} guarantees that, locally, \\$\sum^{n}_{i=1}\frac{a^{2}_{i}}{2}(\frac{\partial J}{\partial x_{i}})^{2}\ne 0$ for $x\ne x^{*}$, i.e. $\mathcal{
L}_{\acute{f}}J$ is locally negative-definite.

By Assumption \ref{as} and (\ref{ly}) the cost function $J:M\rightarrow \mathds{R}_{\geq 0}$ is locally positive definite, its derivative with respect to time is negative definite and $J(x^{*})=0$. Hence, $J$ is a Lyapunov function on $(M,g^{M})$, see \cite{Lewis}. Therefore, applying the results of \cite{Lewis}, Theorem 6.14, implies that $x^{*}$ is locally asymptotically stable.
\section{Proof of Theorem \ref{t1}}
\label{A000}

	We analyze closeness of solutions between state trajectories of  $\frac{dx}{d\tau}=\frac{1}{\omega} f(x,\tau)$ and state trajectories of $\frac{dx}{d\tau}=\frac{1}{\omega}\hat{f}(x)$, where \\$f(x,\tau)=-\sum^{n}_{i=1}\sin(\bar{\omega}_{i}\tau)J(\exp_{\hat{x}}\sum^{n}_{i=1}a\sin(\bar{\omega}_{i}\tau)\frac{\partial}{\partial x_{i}})\frac{\partial}{\partial x_{i}}$ and $\hat{f}(x)=\frac{1}{T}\int^{T}_{0}f(x,\tau)d\tau$. We consider the periodic vector field $Z$ defined in Lemma \ref{ll11}, $Z(t,x)\doteq\int^{t}_{0}(\hat{f}(x)-f(x,\tau))d\tau,\hspace{.2cm}x\in M,t\in\mathds{R}_{\geq 0}$,
 where $Z(t,x)=Z(t+T,x)$ and $T$ is the period of the extremum seeking system $f$. Now consider a composition of flows on $M$ given by
\EQ z(\tau)=\Phi^{(1,0)}_{\frac{1}{\omega} Z}\circ \Phi_{\frac{1}{\omega} f}(\tau,\tau_{0},x_{0}).\nnum\EN
By the results of Lemma \ref{ll11}, the tangent vector of $z$ is computed by
\EQ \label{rrr}\dot{z}(\tau)&=&T_{\Phi_{\frac{1}{\omega} f}(\tau,\tau_{0},x_{0})}\Phi_{\frac{1}{\omega} Z}^{(1,0)}\Big(\frac{1}{\omega} f(\Phi_{\frac{1}{\omega} f}(\tau,\tau_{0},x_{0}),\tau)\Big)+\frac{\partial}{\partial \tau}\big(\Phi_{\frac{1}{\omega} Z}^{(1,0)}\circ \Phi_{\frac{1}{\omega} f}(\tau,\tau_{0},x_{0})\big)\nnum\\&=&(\Phi^{-1})_{\frac{1}{\omega} Z}^{(1,0)^{*}}\Big(\frac{1}{\omega} f(\cdot,\tau)\Big)(z(\tau))+\frac{1}{\omega}\int^{1}_{0}(\Phi^{-1})^{(1,s)^{*}}_{\frac{1}{\omega} Z}\big(\hat{f}(\cdot)-f(\cdot,\tau)\big)ds\circ z(\tau),\nnum\\\EN 
where $(\Phi^{-1})^{(1,s)^{*}}_{\frac{1}{\omega} Z}$ is the pullback of the state flow $\Phi^{-1}_{\frac{1}{\omega} Z}$ and $\epsilon=\frac{1}{\omega}$. See Appendix \ref{A1} for the definition of pullbacks along diffeomorphisms.
Equivalently, in a compact form, we have
\EQ \label{kk1}\dot{z}(\tau)&=&\frac{1}{\omega}\Big[(\Phi^{-1})_{\frac{1}{\omega} Z}^{(1,0)^{*}} f+\int^{1}_{0}(\Phi^{-1})^{(1,s)^{*}}_{\frac{1}{\omega} Z}\big(\hat{f}-f\big)ds\Big]\circ z(\tau)\nnum\\&\doteq&\frac{1}{\omega} H\left(\frac{1}{\omega},\tau,z(\tau)\right). \EN
One may see that $H(0,\tau,x)=\hat{f}(x)$ where by the construction above, $H$ is smooth with respect to $\frac{1}{\omega}$. By applying the Taylor expansion with remainder we have
\EQ H\left(\frac{1}{\omega},\tau,x\right)=\hat{f}(x)+\frac{1}{\omega} h(x,\zeta,\tau),\nnum\EN 
where $h(x,\zeta,\tau)=\frac{\partial }{\partial \frac{1}{\omega}}H\left(\frac{1}{\omega},\tau,x\right)|_{\frac{1}{\omega}=\zeta}$ and $\zeta\in[0,\frac{1}{\omega}]$.
We note that $H\left(\frac{1}{\omega},\tau,x\right)$ is periodic with respect to $\tau$ since $f(x,\tau)$ and $Z(\tau,x)$ are both T-periodic. Hence, $h(x,\zeta,\tau)$ is a T-periodic vector field on $M$. 

The metric triangle inequality on $(M,g^{M})$ implies
\EQ \label{kirkir}&&d(\Phi_{\frac{1}{\omega} f}(\tau,\tau_{0},x_{0}), \Phi_{\frac{1}{\omega} \hat{f}}(\tau,\tau_{0},x_{0}))\leq d(\Phi_{\frac{1}{\omega} f}(\tau,\tau_{0},x_{0}),\Phi_{\frac{1}{\omega} Z}^{(1,0)}\circ \Phi_{\frac{1}{\omega} f}(\tau,\tau_{0},x_{0}))+\nnum\\&& d(\Phi_{\frac{1}{\omega} Z}^{(1,0)}\circ \Phi_{\frac{1}{\omega} f}(\tau,\tau_{0},x_{0}),\Phi_{\frac{1}{\omega} \hat{f}}(\tau,\tau_{0},x_{0}))\leq
d(\Phi_{\frac{1}{\omega} f}(\tau,\tau_{0},x_{0}),\Phi_{\frac{1}{\omega} Z}^{(1,0)}\circ \Phi_{\frac{1}{\omega} f}(\tau,\tau_{0},x_{0}))+\nnum\\&& d(\Phi_{\frac{1}{\omega} Z}^{(1,0)}\circ \Phi_{\frac{1}{\omega} f}(\tau,\tau_{0},x_{0}),x^{*})+d(\Phi_{\frac{1}{\omega} \hat{f}}(\tau,\tau_{0},x_{0}),x^{*}).\nnum\\\EN

Based on (\ref{kirkir}), We analyze the closeness of solutions for the following dynamics.
\EQ \label{kkk}&&\dot{x}(\tau)=\frac{1}{\omega} \hat{f}(x(\tau)),\hspace{2.5cm}x(\tau_{0})=x_{0},\nnum\\&&\dot{y}(\tau)=\frac{1}{\omega} \acute{f}(y(\tau)),\hspace{2.5cm}y(\tau_{0})=x_{0},\nnum\\&&\dot{z}(\tau)=\frac{1}{\omega} \hat{f}(z(\tau))+\frac{1}{\omega^{2}} h(z,\zeta,\tau),\hspace{.2cm}z(\tau_{0})=x_{0},\EN
where $\acute{f}=-\sum^{n}_{i=1}\frac{a^{2}_{i}}{2}\nabla_{\frac{\partial}{\partial x_{j}}}J(x)\frac{\partial}{\partial x_{i}}$ is the gradient system.
Rescaling time  back to $t$ via $t=\frac{1}{\omega}\tau$, yields
\EQ &&\frac{dx}{dt}=\hat{f}\left(x(t)\right),\hspace{2.2cm}x(t_{0})=x_{0},\nnum\\&&\frac{dy}{dt}=\acute{f}\left(y(t)\right),\hspace{2.2cm}y(t_{0})=x_{0},\nnum\\&&\frac{dz}{dt}= \hat{f}\left(z(t)\right)+\frac{1}{\omega} h(z,\zeta,t),\hspace{.2cm}z(t_{0})=x_{0},\nnum\EN
or equivalently by Lemma \ref{kir}
\EQ \label{kirekhar}&&\frac{dx}{dt}=\acute{f}\left(x(t)\right)+\sum^{n}_{i=1}O((\max_{i\in1,\cdots,n}a_{i})^{4})\frac{\partial}{\partial x_{i}},\nnum\\&&\frac{dy}{dt}=\acute{f}\left(y(t)\right),\nnum\\&&\frac{dz}{dt}= \acute{f}\left(z(t)\right)+\sum^{n}_{i=1}O((\max_{i\in1,\cdots,n}a_{i})^{4})\frac{\partial}{\partial z_{i}}+\frac{1}{\omega} h(z,\zeta,t),\EN
where $x(t_{0})=y(t_{0})=z(t_{0})=x_{0}$.

The variation of the cost function $J$ along $\hat{f}(\cdot)$ is given by
\EQ \mathcal{L}_{\hat{f}}J&&=\mathcal{L}_{\acute{f}+\sum^{n}_{i=1}O((\max_{i\in1,\cdots,n}a_{i})^{4})\frac{\partial}{\partial x_{i}}}J\nnum\\&&=\mathcal{L}_{\acute{f}}J+\mathcal{L}_{\sum^{n}_{i=1}O((\max_{i\in1,\cdots,n}a_{i})^{4})\frac{\partial}{\partial x_{i}}}J\nnum\\&&=\mathcal{L}_{\acute{f}}J+\sum^{n}_{i=1}O((\max_{i\in1,\cdots,n}a_{i})^{4})\mathcal{L}_{\frac{\partial}{\partial x_{i}}}J.\nnum\EN 

As shown by the proof of Lemma \ref{kkir2}, locally, we have $\mathcal{L}_{\acute{f}}J\leq 0$.
Without loss of generality, assume positive definiteness and negative definiteness of $J$ and  \\$\mathcal{L}_{\acute{f}}J=\mathcal{L}_{-\frac{1}{\omega}\sum^{n}_{i=1}\frac{a^{2}_{i}}{2}\nabla_{\frac{\partial}{\partial x_{j}}}J(x)\frac{\partial}{\partial x_{i}}}J$ are both obtained on  $U_{x^{*}}\subset M$ of $x^{*}$. Otherwise we apply the intersection of the corresponding neighborhoods to perform the analysis above.
Define the sublevel set $\mathcal{N}_{b}$ of the positive definite function $J:M\rightarrow\mathds{R}_{\geq 0}$  as $\mathcal{N}_{b}\doteq\{x\in M, \hspace{.2cm} J(x)\leq b\}$. By $\mathcal{N}_{b}(x^{*})$ we denote a connected sublevel set of $M$ containing $x^{*}\in M$.
By Lemma 6.12 in \cite{Lewis}, there exists a subslevel set $\mathcal{N}_{b}(x^{*})\subset U_{x^{*}}$, such that $\mathcal{N}_{b}(x^{*})$ is compact.
Consider a neighborhood of $x^{*}$ denoted by $W_{x^{*}}$ such that $W_{x^{*}}\subset int(\mathcal{N}_{b}(x^{*}))\subset U_{x^{*}}\subset M$, where $int()$ gives the interior set. Compactness of $M$ implies that  $M-W_{x^{*}}$ is closed and compact. Hence, $\mathcal{L}_{\acute{f}}J<0$ for all $x\in (M-W_{x^{*}})\bigcap \mathcal{N}_{b}(x^{*})$. Note that $\mathcal{L}_{\acute{f}}J<0$ on $U_{x^{*}}-\{x^{*}\}$.

  Smoothness of $J$ and compactness of $M-W_{x^{*}}$ together imply that $\mathcal{L}_{\acute{f}}J$ attains its bounded maximum value in $M-W_{x^{*}}$. Hence, by selecting $a_{i},\hspace{.2cm}i=1,\cdots,n$ sufficiently small we have $\mathcal{L}_{\hat{f}}J<0$ on $(M-W_{x^{*}})\bigcap\mathcal{N}_{b}(x^{*})$. This implies that the state trajectory $\Phi_{\acute{f}}(t,t_{0},x_{0})$ remains in $\mathcal{N}_{b}(x^{*})$ for $x_{0}\in int(\mathcal{N}_{b}(x^{*}))$.

The variation of $J$ along $ \hat{f}\left(z(t)\right)+\frac{1}{\omega} h(z,\zeta,t)$ is then given by 

\EQ \mathcal{L}_{\hat{f}+\frac{1}{\omega} h}J&=&\mathcal{L}_{\hat{f}}J+\frac{1}{\omega} \mathcal{L}_{h}J\nnum\\&=&\mathcal{L}_{\acute{f}}J+\sum^{n}_{i=1}O((\max_{i\in1,\cdots,n}a_{i})^{4})\mathcal{L}_{\frac{\partial}{\partial x_{i}}}J+\frac{1}{\omega} \mathcal{L}_{h}J.\nnum\EN
The same argument applies  to the variation of $J$ along $\hat{f}+\frac{1}{\omega}h(z,\zeta,t)$ and we obtain that $\mathcal{L}_{\hat{f}+\frac{1}{\omega} h}J<0$ on $(M-W_{x^{*}})\bigcap \mathcal{N}_{b}(x^{*})$ for sufficiently small $a_{i}$ and sufficiently large $\omega$. Note that $h$ is periodic with respect to $t$, $\zeta\in[0,\frac{1}{\omega}]$ and $M$ is compact. Hence, $\mathcal{L}_{h}J$ is bounded and  this implies that by choosing $a_{i}$ sufficiently small and $\omega$ sufficiently large the state trajectory $z(\cdot)$ remains in $\mathcal{N}_{b}(x^{*})$ for all initial states $z_{0}\in int(\mathcal{N}_{b}(x^{*}))$.

Denote the uniform normal neighborhood of $x^{*}\in M$ with respect to $U_{x^{*}}$  by $U^{n}_{x^{*}}$ (its existence is guaranteed by Lemma 5.12 in \cite{Lee3}). Consider a  geodesic ball of radius $\delta$ where $U^{n}_{x^{*}}\subset \exp_{x^{*}}(B_{\delta}(0)) \subset U_{x^{*}}$.   By definition, $\exp_{x^{*}}(B_{\delta}(0))$ is an open set containing $x^{*}$ in the topology of $M$. Therefore  one may shrink $b$ to $\acute{b}, 0<\acute{b}\leq b,$  such that  $\mathcal{N}_{\acute{b}}(x^{*})\subset \exp_{x^{*}}(B_{\delta}(0))$. Hence, by the argument above, we select the set of initial state such as $\Phi_{\hat{f}+\frac{1}{\omega}h}(\cdot,t_{0},x_{0})$ stays in a normal neighborhood of $x^{*}$. For the economy of notation we replace $\acute{b}$ with $b$ and assume $\mathcal{N}_{b}(x^{*})\subset \exp_{x^{*}}(B_{\delta}(0))\subset U_{x^{*}}$.

We analyze the distance between the state trajectory $y(\cdot)$ of the asymptotically stable system and the averaged and full systems. As is obvious from (\ref{kirekhar}), the averaged systems and the dynamical system corresponding to $z(\cdot)$ are  perturbations of  the gradient system $\dot{y}=\acute{f}(y)$, where $\dot{y}=\acute{f}(y)$ is locally asymptotically stable and the magnitude of the  perturbations vector fields  is arbitrarily shrunken by adjusting $a_{i}$ and $\omega$ in (\ref{kirekhar}). Since the initial state set is chosen such that the state trajectory $z(\cdot)$ remains in a normal neighborhood of $x^{*}$, then the conditions of Theorems \ref{t7} in Appendix \ref{A2} are satisfied. Hence, there exist  a neighborhood $U^{1}_{x^{*}}\subset int(\mathcal{N}_{b}(x^{*}))$ and a continuous function $\rho$, such that for all $x_{0}\in U^{1}_{x^{*}}$
 \EQ \label{fuck1}&&\limsup_{t\rightarrow \infty} d\left(\Phi_{\hat{f}+\frac{1}{\omega}h}(t,t_{0},x_{0}),x^{*}\right)\leq\nnum\\&&\rho\left(\sup_{z\in M, t\in [t_{0},t_{0}+T]}||\sum^{n}_{i=1}O((\max_{i\in 1,\cdots,n}a_{i})^{4})\frac{\partial}{\partial z{i}}+\frac{1}{\omega}h(z,\zeta,t)||_{g^{M}}\right),\nnum\\\EN
 where $\rho$ is a continuous function which is zero at zero. We note that since $M$ is compact, $h$ is periodic with respect to $t$ and $\zeta\in[0,\frac{1}{\omega}]$, then \\ $\sup_{z\in M, t\in [t_{0},t_{0}+T]}||\sum^{n}_{i=1}O((\max_{i\in 1,\cdots,n}a_{i})^{4})\frac{\partial}{\partial z{i}}+\frac{1}{\omega}h(z,\zeta,t)||_{g^{M}}$ is bounded.

Also note that (\ref{fuck1}) does not guarantee the convergence of the perturbed state trajectory to $x^{*}$. However, it gives a local closeness of solutions in terms of the Riemannian distance function $d$ to $x^{*}$ after elapsing enough time. The closeness estimation provided in  (\ref{fuck1}) is used to bound the distance between $y(\cdot)$, $x(\cdot)$ and $z(\cdot)$ as follows.
By employing the triangle inequality we have 
\EQ \label{kirkir1}&&d\left(\Phi_{ f}(t,t_{0},x_{0}),\Phi_{\acute{f}}(t,t_{0},x_{0})\right)\leq d\left(\Phi_{ f}(t,t_{0},x_{0}),\Phi_{\hat{f}+\frac{1}{\omega}h}(t,t_{0},x_{0})\right)+\nnum\\&&d\left(\Phi_{\hat{f}+\frac{1}{\omega}h}(t,t_{0},x_{0}),\Phi_{\acute{f}}(t,t_{0},x_{0})\right),\EN
and
\EQ \label{kirkir2}&&d\left(\Phi_{\hat{f}+\frac{1}{\omega}h}(t,t_{0},x_{0}),\Phi_{\acute{f}}(t,t_{0},x_{0})\right)\leq d\left(\Phi_{ \hat{f}+\frac{1}{\omega}h}(t,t_{0},x_{0}),x^{*}\right)+d\left(x^{*},\Phi_{\acute{f}}(t,t_{0},x_{0})\right),\nnum\\\EN
 where in  (\ref{kirkir2}), $d(x^{*},\Phi_{\acute{f}}(t,t_{0},x_{0}))$ converges to zero and  $d(\Phi_{ \hat{f}+\frac{1}{\omega}h}(t,t_{0},x_{0}),x^{*})$ can be chosen arbitrarily small by (\ref{fuck1}). Note that $||h(z,\zeta,t)||_{g^{M}}$ is bounded since $M$ is compact, $\zeta\in[0,\frac{1}{\omega}]$ and $h$ is periodic with respect to $t$.
	In order to show the boundedness of $d(\Phi_{ f}(t,t_{0},x_{0}),\Phi_{\hat{f}+\frac{1}{\omega}h}(t,t_{0},x_{0}))$ in (\ref{kirkir1}), we switch back to the time scale $\tau$.
Now we prove $d(\Phi_{\frac{1}{\omega} f}(\tau,\tau_{0},x_{0}),\Phi_{\frac{1}{\omega} Z}^{(1,0)}\circ \Phi_{\frac{1}{\omega} f}(\tau,\tau_{0},x_{0}))=O\left(\frac{1}{\omega}\right)$.

By the definition of the distance function given in (\ref{dist}), we have  $d(\Phi_{\frac{1}{\omega}Z}(s,0,x),x)\leq \ell(\Phi_{\frac{1}{\omega} Z}(s,0,x)),$ where $\ell(\Phi_{\frac{1}{\omega} Z}(s,0,x))$ is the length of the curve connecting $x$ to $\Phi_{\frac{1}{\omega} Z}(s,0,x)$ on $M$. Therefore,
\EQ \label{kirkoon}&&d\left(\Phi_{\frac{1}{\omega} Z}(1,0,x),x\right)\leq\ell(\Phi_{\frac{1}{\omega} Z}(1,0,x))= \frac{1}{\omega}\int^{1}_{0}||Z(t,\Phi_{\frac{1}{\omega}Z}(s,0,x))||_{g^{M}}ds.\EN
Periodicity of $Z$ with respect to $t$, boundedness of $\Phi_{\frac{1}{\omega}Z}(\cdot,0,x)$ in the sense of compactness of $M$ and smoothness of $Z$ with respect to $x$ together yield $d(\Phi_{\frac{1}{\omega} Z}(1,0,x),x)=O\left(\frac{1}{\omega}\right)$. Since $x$ is a generic element of $M$ we have 
\EQ d\left(\Phi_{\frac{1}{\omega} f}(\tau,\tau_{0},x_{0}),\Phi_{\frac{1}{\omega} Z}^{(1,0)}\circ \Phi_{\frac{1}{\omega} f}(\tau,\tau_{0},x_{0})\right)=O\left(\frac{1}{\omega}\right),\forall \tau\in[\tau_{0},\infty),\nnum\EN
where $x$ is replaced by $\Phi_{\frac{1}{\omega} f}(\tau,\tau_{0},x_{0})\in M$.
Hence, by using (\ref{kirkir}), for any $x_{0}\in U^{1}_{x^{*}}$, there exists a time $\acute{T}_{x_{0}}$, such that
\EQ &&d\left(\Phi_{\frac{1}{\omega} f}(\tau,\tau_{0},x_{0}), \Phi_{\frac{1}{\omega} \acute{f}}(\tau,\tau_{0},x_{0})\right)\leq d\left(\Phi_{\frac{1}{\omega} f}(\tau,\tau_{0},x_{0}),\Phi_{\frac{1}{\omega} Z}^{(1,0)}\circ \Phi_{\frac{1}{\omega} f}(\tau,\tau_{0},x_{0})\right)+\nnum\\&& d\left(\Phi_{\frac{1}{\omega} Z}^{(1,0)}\circ \Phi_{\frac{1}{\omega} f}(\tau,\tau_{0},x_{0}),\Phi_{\frac{1}{\omega} \acute{f}}(\tau,\tau_{0},x_{0})\right)\leq d\left(\Phi_{\frac{1}{\omega} f}(\tau,\tau_{0},x_{0}),\Phi_{\frac{1}{\omega} Z}^{(1,0)}\circ \Phi_{\frac{1}{\omega} f}(\tau,\tau_{0},x_{0})\right)+\nnum\\&&d\left(\Phi_{\frac{1}{\omega} Z}^{(1,0)}\circ \Phi_{\frac{1}{\omega} f}(\tau,\tau_{0},x_{0}),x^{*}\right)+d\left(x^{*},\Phi_{\frac{1}{\omega} \acute{f}}(\tau,\tau_{0},x_{0})\right)\leq \nnum\\&& O\left(\frac{1}{\omega}\right)+\rho\left(\sup_{z\in M,t\in[t_{0},t_{0}+T]}||\sum^{n}_{i=1}O((\max_{i\in 1,\cdots,n}a_{i})^{4})\frac{\partial}{\partial z{i}}+\frac{1}{\omega}h(z,\zeta,t)||_{g^{M}}\right)+\nnum\\&&d\left(x^{*},\Phi_{\frac{1}{\omega} \acute{f}}(\tau,\tau_{0},x_{0})\right),\hspace{.2cm}\forall \tau\in [\omega \acute{T}_{x_{0}},\infty).\nnum\EN
Note that $\Phi_{\frac{1}{\omega} Z}^{(1,0)}\circ \Phi_{\frac{1}{\omega} f}(\tau,\tau_{0},x_{0})=\Phi_{f+\frac{1}{\omega}h}(t,t_{0},x_{0})$, for $\tau=\omega t$ and $\tau_{0}=\omega t_{0}$.
Finally we have 
\EQ \label{koonak}&&d\left(\Phi_{\frac{1}{\omega} f}(\tau,\tau_{0},x_{0}),x^{*}\right)\leq d\left(\Phi_{\frac{1}{\omega} f}(\tau,\tau_{0},x_{0}), \Phi_{\frac{1}{\omega} \acute{f}}(\tau,\tau_{0},x_{0})\right)+ d\left(x^{*}, \Phi_{\frac{1}{\omega} \acute{f}}(\tau,\tau_{0},x_{0})\right)\leq \nnum\\&&O\left(\frac{1}{\omega}\right)+\rho\left(\sup_{z\in M,t\in[t_{0}.t_{0}+T]}||\sum^{n}_{i=1}O((\max_{i\in 1,\cdots,n}a_{i})^{4})\frac{\partial}{\partial z{i}}+\frac{1}{\omega}h(z,\zeta,t)||_{g^{M}}\right)+\nnum\\&&2d\left(x^{*},\Phi_{\frac{1}{\omega} \acute{f}}(\tau,\tau_{0},x_{0})\right),\hspace{.2cm}\forall \tau\in [\omega\acute{T}_{x_{0}},\infty), x_{0}\in U^{1}_{x^{*}}.
\nnum\\\EN
 As (\ref{koonak}) indicates  $d\left(\Phi_{\frac{1}{\omega} f}(\tau,\tau_{0},x_{0}),x^{*}\right)$ can be ultimately bounded by shrinking $a_{i},\hspace{.2cm}i=1,\cdots,n$ and increasing $\omega$ such that the state trajectory $\Phi_{f}(t,t_{0},x_{0})$  enters $U_{x^{*}}$ and remains there.  

	\section{Averaging on Riemannian manifolds}
	\label{A1}
	
Let us consider a perturbed system as
\EQ \label{pp}\dot{x}(t)=\epsilon f(x(t),t), \hspace{.2cm}f\in \mathfrak{X}(M\times \mathds{R}),x_{0}\in M,\hspace{.2cm}\epsilon\geq 0,\nnum\EN
where $f$ is periodic in $t$ with the period $T$, i.e. $f(x,t)=f(x,t+T)$. Such a system is referred to as \textit{$T$-periodic}. The averaged vector field $\hat{f}$ is given by 
\EQ \label{ppp}\hat{f}(x)\doteq\frac{1}{T}\int^{T}_{0}f(x,s)ds,\EN
where the average dynamical system is locally given by $\dot{x}(t)=\epsilon \hat{f}(x(t))$.
	
In order to obtain  closeness of solutions for dynamical systems we employ the notion of pullbacks of vector fields along diffeomorphisms on 
$M$ as per Definition \ref{koskalak}. 

We have the following lemma for the variation of smooth parameter varying vector fields.
\begin{lemma}[\hspace{-.02cm}\cite{Agra}, Page 40, \cite{Lewis}, Page 451]
\label{ll111}
Consider a smooth parameter varying vector field $Y(\lambda,x)$, where $Y\in \mathfrak{X}(\mathds{R}\times M)$ with the associated flow $\Phi_{Y}(t,t_{0},\cdot):M\rightarrow M$. Then,
\EQ\label{poo}&&\frac{\partial}{\partial\lambda}\Phi_{Y}(t,t_{0},x_{0})=T_{x_{0}}\Phi^{(t,t_{0})}_{Y}\int^{t}_{t_{0}}\left(\Phi^{(s,t_{0})^{*}}_{Y}\frac{\partial }{\partial \lambda}Y\left(\lambda,\cdot\right)\right)(x_{0})ds=\nnum\\&&\int^{t}_{t_{0}}(\Phi^{-1})^{(t,s)^{*}}_{Y}\frac{\partial }{\partial \lambda}Y(\lambda,\cdot)ds\circ \Phi_{Y}(t,t_{0},x_{0}) \in T_{\Phi_{Y}(t,t_{0},x_{0})}M,\EN
where $T_{x_{0}}\Phi^{(t,t_{0})}_{Y}$ is the pushforward of $\Phi_{Y}(t,t_{0},\cdot)$ at $x_{0}$.
\end{lemma}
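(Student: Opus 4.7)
The identity is a variation-of-parameters formula for the flow of a parameter-dependent vector field, so my plan is to derive it by differentiating the flow equation with respect to $\lambda$ and recognizing a linear ODE along the trajectory.

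First, set $\phi(t) \doteq \Phi_Y(t,t_0,x_0)$. Working in a chart, differentiate the defining identity $\dot{\phi}(t) = Y(\lambda,\phi(t))$ with respect to $\lambda$ to obtain, for $V(t) \doteq \tfrac{\partial}{\partial\lambda}\phi(t) \in T_{\phi(t)}M$, the linear (along-trajectory) equation
\begin{equation*}
\dot{V}(t) \;=\; DY(\lambda,\phi(t))\,V(t) \,+\, \tfrac{\partial Y}{\partial\lambda}(\lambda,\phi(t)), \qquad V(t_0)=0,
\end{equation*}
where $DY$ denotes the spatial derivative of $Y$. This is precisely the variational equation of $Y$ along $\phi$ driven by the inhomogeneity $\partial_\lambda Y$. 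The fundamental solution of the homogeneous part $\dot{W} = DY(\lambda,\phi)\,W$ with $W(t_0)=\mathrm{Id}$ is the pushforward $W(s) = T_{x_0}\Phi_Y^{(s,t_0)}$, since differentiating $\Phi_Y(s,t_0,\cdot)$ in its spatial argument yields exactly this variational flow (a standard fact about smoothness of flows in the initial state).

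Second, apply variation of parameters: the unique solution with $V(t_0)=0$ is
\begin{equation*}
V(t) \;=\; T_{x_0}\Phi_Y^{(t,t_0)} \int_{t_0}^{t} \bigl[T_{x_0}\Phi_Y^{(s,t_0)}\bigr]^{-1} \tfrac{\partial Y}{\partial\lambda}(\lambda,\phi(s))\,ds.
\end{equation*}
Now recognize $\bigl[T_{x_0}\Phi_Y^{(s,t_0)}\bigr]^{-1} = T_{\Phi_Y(s,t_0,x_0)}\Phi_Y^{(t_0,s)}$, and compare with the definition of the pullback in \eqref{pu}: applying $\Phi_Y^{(s,t_0)^{*}}$ to the vector field $\partial_\lambda Y(\lambda,\cdot)$ and evaluating at $x_0$ gives exactly $T_{\phi(s)}\Phi_Y^{(t_0,s)} \cdot \tfrac{\partial Y}{\partial\lambda}(\lambda,\phi(s))$. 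This yields the first equality in \eqref{poo}.

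Third, to obtain the second equality, push the outer $T_{x_0}\Phi_Y^{(t,t_0)}$ inside the integral and use the group property of the flow $\Phi_Y^{(t,t_0)} \circ \Phi_Y^{(t_0,s)} = \Phi_Y^{(t,s)}$ together with the chain rule for pushforwards:
\begin{equation*}
T_{x_0}\Phi_Y^{(t,t_0)} \circ T_{\phi(s)}\Phi_Y^{(t_0,s)} \;=\; T_{\phi(s)}\Phi_Y^{(t,s)}.
\end{equation*}
Unpacking the definition of $(\Phi^{-1})_Y^{(t,s)^{*}}$ evaluated at $\phi(t) = \Phi_Y(t,t_0,x_0)$ shows that the composed expression agrees termwise with $\bigl((\Phi^{-1})_Y^{(t,s)^{*}}\partial_\lambda Y\bigr)(\phi(t))$, completing the identification. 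The only real subtlety, and the step I expect to demand the most care, is keeping track of base points and the corresponding domains/codomains of the pushforwards so that the chain rule is applied at the correct linearization point; once this bookkeeping is done, the rest is formal.
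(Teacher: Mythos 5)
Your derivation is correct, but note that the paper never proves this lemma at all: it is quoted verbatim as a known result, with the proof delegated to the cited references (Agrachev--Sachkov and Bullo--Lewis), where it is obtained via the chronological-calculus/variations formula, i.e.\ by differentiating the pullback of the flow along itself and integrating the resulting operator identity. Your route is the elementary one: write the variational equation for $V(t)=\partial_\lambda\Phi_Y(t,t_0,x_0)$ in a chart, identify the fundamental solution of the homogeneous part with the pushforward $T_{x_0}\Phi_Y^{(s,t_0)}$, and apply Duhamel/variation of parameters, then translate $\bigl[T_{x_0}\Phi_Y^{(s,t_0)}\bigr]^{-1}=T_{\Phi_Y(s,t_0,x_0)}\Phi_Y^{(s,t_0)^{-1}}$ and $T_{x_0}\Phi_Y^{(t,t_0)}\circ T_{\Phi_Y(s,t_0,x_0)}\Phi_Y^{(t_0,s)}=T_{\Phi_Y(s,t_0,x_0)}\Phi_Y^{(t,s)}$ into the paper's pullback notation of Definition~\ref{koskalak}; both identifications are carried out at the right base points, so the two displayed equalities in \er{poo} follow. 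The only cosmetic gap is the phrase ``working in a chart'': the trajectory need not stay in one chart, so strictly you should either patch finitely many charts along the compact time interval or remark that both sides of \er{poo} are intrinsic objects whose equality is local in $t$ and hence propagates by the flow property; this is routine. What your approach buys is self-containedness and no machinery beyond smooth dependence of flows on parameters and initial data; what the cited operator-calculus proof buys is that it produces the formula directly in the coordinate-free pullback form used here (and reused in the proof of Lemma~\ref{ll11}), without the chart bookkeeping you correctly flag as the delicate step.
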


	\subsection{Proof of Lemma \ref{ll11}}
	The proof parallels the results of \cite{Lewis}, Chapter 9 on $\mathds{R}^{n}$. We compute the tangent vector of $z(t)\doteq\Phi_{\epsilon Z}^{(1,0)}\circ \Phi_{\epsilon f}(t,t_{0},x_{0})\in M$ where $Z(t,x)\doteq\int^{t}_{0}\big(\hat{f}(x)-f(x,s)\big)ds,\hspace{.2cm}0\leq t,$. The derivative of $\Phi_{\epsilon Z}^{(1,0)}\circ \Phi_{\epsilon f}(t,t_{0},x_{0})$ with respect to time has two components as follows:
\EQ\label{rr}&&\dot{z}(t)=T_{\Phi_{\epsilon f}(t,t_{0},x_{0})}\Phi_{\epsilon Z}^{(1,0)}\Big(\epsilon f(\Phi_{\epsilon f}(t,t_{0},x_{0}),t)\Big)+\frac{\partial}{\partial t}\big(\Phi_{\epsilon Z}^{(1,0)}\circ \Phi_{\epsilon f}(t,t_{0},x_{0})\big), \EN
where $\epsilon f(\Phi_{\epsilon f}(t,t_{0},x_{0}),t)=\frac{\partial}{\partial t}\Phi_{\epsilon f}(t,t_{0},x_{0})$.
The first term is the variation of $z(t)$ with respect to the variation of the initial state $\Phi_{\epsilon f}(t,t_{0},x_{0})$ and the second term is the variation of $z(t)$ with respect to the variation of $Z$. 
Note that the flow $\Phi_{\epsilon Z}^{(s,0)}\circ \Phi_{\epsilon f}(t,t_{0},x_{0})$ has two time scales $t$ and $s$ which are independent. Hence, the vector field $Z$ is $s$ invariant and $t$ dependent where $t$ appears as a parameter in $Z$.

By (\ref{pu}) we have
\EQ  T_{\Phi_{\epsilon f}(t,t_{0},x_{0})}\Phi_{\epsilon Z}^{(1,0)}\Big(\epsilon f(\Phi_{\epsilon f}(t,t_{0},x_{0}),t)\Big)&=&(\Phi^{-1})_{\epsilon Z}^{(1,0)^{*}}\Big(\epsilon f(\cdot,t)\Big)(\Phi_{\epsilon Z}^{(1,0)}\circ \Phi_{\epsilon f}(t,t_{0},x_{0}))\nnum\\&=&(\Phi^{-1})_{\epsilon Z}^{(1,0)^{*}}\Big(\epsilon f(\cdot,t)\Big)(z(t)),\nnum \EN
and by Lemma \ref{ll111}, we have
\EQ &&\frac{\partial}{\partial t}\big(\Phi_{\epsilon Z}^{(1,0)}\circ
\Phi_{\epsilon
f}(t,t_{0},x_{0})\big)=\int^{1}_{0}(\Phi^{-1})^{(1,s)^{*}}_{\epsilon
Z}\frac{\partial}{\partial t}\big(Z(t,\cdot)\big)ds\circ\Phi_{\epsilon Z}^{(1,0)}\circ \Phi_{\epsilon f}(t,t_{0},x_{0}))=\nnum\\&&\epsilon\int^{1}_{0}(\Phi^{-1})^{(1,s)^{*}}_{\epsilon
Z}\big(\hat{f}(\cdot)-f(\cdot,t)\big)ds\circ z(t),\nnum\EN

 Hence, 
\EQ \label{rrrr}&&\dot{z}(t)=(\Phi^{-1})_{\epsilon Z}^{(1,0)^{*}}\Big(\epsilon f(\cdot,t)\Big)(z(t))+\epsilon\int^{1}_{0}(\Phi^{-1})^{(1,s)^{*}}_{\epsilon Z}\big(\hat{f}(\cdot)-f(\cdot,t)\big)ds\circ z(t).\EN 
In a compact form, (\ref{rrrr}) is written as
\EQ \dot{z}(t)&=&\epsilon\Big[(\Phi^{-1})_{\epsilon Z}^{(1,0)^{*}} f+\int^{1}_{0}(\Phi^{-1})^{(1,s)^{*}}_{\epsilon Z}\big(\hat{f}-f\big)ds\Big]\circ z(t)\nnum\\&\doteq&\epsilon H(\epsilon,t,z(t))\in T_{z(t)}M.\nnum\EN

	\section{Proof of Theorem \ref{tt11}}
\label{AK}
The proof parallels the proof of Theorem \ref{t1} by employing the results of Lemmas \ref{kakir2} and \ref{kirak}. However, (\ref{kirkoon}) does not necessarily hold since $G$ is not compact. The same as (\ref{kirkoon}) we observe that
	\EQ &&d(\Phi_{ f}(t,t_{0},g_{0}),\Phi_{\acute{f}}(t,t_{0},g_{0}))\leq d(\Phi_{ f}(t,t_{0},g_{0}),\Phi_{\hat{f}+\frac{1}{\omega}h}(t,t_{0},g_{0}))+\nnum\\&&d(\Phi_{\hat{f}+\frac{1}{\omega}h}(t,t_{0},g_{0}),\Phi_{\acute{f}}(t,t_{0},g_{0})),\nnum\EN
and
\EQ &&d(\Phi_{\hat{f}+\frac{1}{\omega}h}(t,t_{0},g_{0}),\Phi_{\acute{f}}(t,t_{0},g_{0}))\leq d(\Phi_{ \hat{f}+\frac{1}{\omega}h}(t,t_{0},g_{0}),g^{*})+d(g^{*},\Phi_{\acute{f}}(t,t_{0},g_{0})),\nnum\EN
	where $f$, $\hat{f}$ and $\acute{f}$ are the extremum seeking, averaged and gradient vector fields induced by (\ref{kkg}). By the results of Lemma \ref{kakir2} and Theorem \ref{t7},  $d(g^{*},\Phi_{\acute{f}}(t,t_{0},g_{0}))$ converges to zero and $d(\Phi_{ \hat{f}+\frac{1}{\omega}h}(t,t_{0},g_{0}),g^{*})$ is ultimately bounded by a continuous function $\rho$ where the bound is shrunken by adjusting $a_{i}, i=1,\cdots,n$ and $\omega$. Note that in the proof of Theorem \ref{t7} it is shown that asymptotic stability of the gradient vector field $\acute{f}$ guarantees that the state trajectories of $\hat{f}$ and $\hat{f}+\frac{1}{\omega}h$ remains in a compact subset containing the equilibrium of $\acute{f}$.
		
		It remains to show $d(\Phi_{\frac{1}{\omega} f}(\tau,\tau_{0},x_{0}),\Phi_{\frac{1}{\omega} Z}^{(1,0)}\circ \Phi_{\frac{1}{\omega} f}(\tau,\tau_{0},x_{0}))=O\left(\frac{1}{\omega}\right)$ in the time scale $\tau=\omega t$ for $\tau\in[\tau_{0},\infty)$.
		
		Following the proof of Theorem \ref{t7} one may show $\Phi_{\hat{f}+\epsilon h}(\omega,\omega_{0},x_{0})\in \mathcal{N}_{b}(g^{*}),\hspace{.2cm}\omega\in[\omega_{0},\infty), g_{0}\in int(\mathcal{N}_{b}(g^{*}))$ for some $b\in \mathds{R}>0$, where $\mathcal{N}_{b}(g^{*})$ is a compact connected sublevel set of a Lyapunov function containing $g^{*}\in G$, see the proof of Theorem \ref{t7} and \cite{Lewis} Lemma 6.12.

		Hence,  $\Phi_{\frac{1}{\omega} Z}^{(1,0)}\circ \Phi_{\frac{1}{\omega} f}(t,t_{0},x_{0})\in \mathcal{N}_{b}(x^{*}),\hspace{.2cm}\tau\in[\tau_{0},\infty),g_{0}\in int(\mathcal{N}_{b}(g^{*}))$.
Therefore,
\EQ  \hspace{-.5cm}\bigcup_{\tau\in[\tau_{0},\infty)}\Phi_{\frac{1}{\omega} f}(\tau,\tau_{0},g_{0})&\subset& \bigcup_{\tau\in[\tau_{0},\infty)}(\Phi^{-1})_{\frac{1}{\omega} Z}^{(1,0)}\circ\mathcal{N}_{b}(g^{*})\nnum\\&=&\bigcup_{\tau\in[\tau_{0},\tau_{0}+\omega T]}(\Phi^{-1})_{\frac{1}{\omega} Z}^{(1,0)}\circ\mathcal{N}_{b}(g^{*})\nnum\\&\subset & \bigcup_{\tau\in[\tau_{0},\tau_{0}+\omega T],\frac{1}{\omega}\in[0,\hat{\epsilon}]}(\Phi^{-1})_{\frac{1}{\omega} Z}^{(1,0)}\circ\mathcal{N}_{b}(x^{*}) ,\nnum \EN
where the equality is due the periodicity of $Z$ and $\hat{\epsilon}$ is a limit for the minimum frequency ($0<\omega<\infty$). 
 Compactness of $\mathcal{N}_{b}(g^{*})$, $[\tau_{0},\tau_{0}+\omega T]$ and $[0,\hat{\epsilon}]$ together with the smoothness of $\Phi^{-1}$ gives compactness of $\bigcup_{\tau\in[\tau_{0},\tau_{0}+\omega T],\frac{1}{\omega}\in[0,\hat{\epsilon}]}(\Phi^{-1})_{\frac{1}{\omega} Z}^{(1,0)}\circ\mathcal{N}_{b}(g^{*})$ in $G$.

Hence, we show that 
\EQ \label{koonii}&& d(\Phi_{\frac{1}{\omega} Z}(1,0,z),z)=O\left(\frac{1}{\omega}\right), \hspace{.2cm}\nnum\\&&z\in \bigcup_{\tau\in[\tau_{0},\tau_{0}+\omega T],\frac{1}{\omega}\in[0,\hat{\epsilon}]}(\Phi^{-1})_{\frac{1}{\omega} Z}^{(1,0)}\circ\mathcal{N}_{b}(g^{*}),\EN
which proves
\EQ d(\Phi_{\frac{1}{\omega} f}(\tau,\tau_{0},x_{0}),\Phi_{\frac{1}{\omega} Z}^{(1,0)}\circ \Phi_{\frac{1}{\omega} f}(\tau,\tau_{0},x_{0}))=O\left(\frac{1}{\omega}\right),\nnum\EN
where $z$ is replaced by $\Phi_{\frac{1}{\omega} f}(\tau,\tau_{0},x_{0})$ in (\ref{koonii}). 
The rest of the proof is identical to the proof of Theorem \ref{t1}.		
	\section{Stability of perturbed systems on Riemannian manifolds}
	\label{A2}
Consider the following perturbed dynamical system on $(M,g^{M})$.
\EQ \label{per}\dot{x}(t)=f(x,t)+h(x,t),f,h\in \mathfrak{X}(M\times \mathds{R}).\EN
The term $h$ is considered as a perturbation of the nominal system $f$. The next lemma gives the existence of  Lyapunov functions for dynamical systems on Riemannian manifold which satisfy specific local properties. 
	\begin{lemma}[\hspace{-.01cm}\cite{Taringoo100}]
\label{lf1}
Let $x^{*}$ be an  equilibrium for the smooth dynamical system $\dot{x}=f(x,t)$ which is uniformly asymptotically stable (see \cite{Kha}) on an open set $\mathcal{N}_{x^{*}}\subset \mathcal{U}^{n}_{x^{*}}$ ($\mathcal{U}^{n}_{x^{*}}$ is a normal neighborhood around $x^{*}$).
 Assume $||T_{x}f(\cdot,t)||$ is uniformly bounded with respect to $t$ on $\mathcal{N}_{x^{*}}$, where $||.||$ is the norm of the bounded linear operator $Tf:TM\rightarrow TTM$. Then, for some $\mathcal{U}_{x^{*}}\subset \mathcal{U}^{n}_{x^{*}}$, for all $x(t_{0})=x_{0}\in \mathcal{U}_{x^{*}}$, there exist a differentiable function $w:M\times \mathds{R}\rightarrow \mathds{R}_{\geq 0}$ and $\alpha_{1},\alpha_{2},\alpha_{3},\alpha_{4}\in \mathcal{K}$ (continuous, strictly increasing and zero at zero, see \cite{Kha}), such that for all $x\in \mathcal{U}_{x^{*}}$ and $t\in[t_{0},\infty),$
\EQ \label{koonkoon1}(i):&&\hspace{.2cm}\alpha_{1}\left(d(x,x^{*})\right)\leq w(x,t)\leq\alpha_{2}\left(d(x,x^{*})\right),\nnum\\
 (ii):&&\hspace{.2cm}\mathcal{L}_{f(x,t)}w\leq -\alpha_{3}\left(d(x,x^{*})\right),\nnum\\ 
 (iii):&&\hspace{.2cm}||T_{x}w||\leq \alpha_{4}\left(d(x,x^{*})\right),\EN
where $d(\cdot,\cdot)$ is the Riemannian metric, $\mathcal{L}$ is the Lie derivative and $Tw:TM\rightarrow T\mathds{R}\simeq \mathds{R}\times \mathds{R}$ is the pushforward of $w$.  
\end{lemma}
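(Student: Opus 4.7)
The plan is to reduce the statement to the classical converse Lyapunov theorem on $\mathds{R}^{n}$ (see \cite{Kha}, Theorem 4.16) by passing through an exponential chart at $x^{*}$. Since $\mathcal{U}^{n}_{x^{*}}$ is a normal neighborhood, the map $\exp_{x^{*}}^{-1}:\mathcal{U}^{n}_{x^{*}}\to B_{\delta}(0)\subset T_{x^{*}}M$ is a smooth diffeomorphism onto a star-shaped neighborhood of the origin. Identifying $T_{x^{*}}M$ with $\mathds{R}^{n}$ via an orthonormal basis (with respect to $g^{M}_{x^{*}}$), I would push the dynamics $\dot x = f(x,t)$ forward through this chart to obtain a time-varying system $\dot v = \tilde f(v,t)$ on an open neighborhood of $0\in\mathds{R}^{n}$ with $\tilde f(0,t)=0$.

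First I would verify that the transported system inherits uniform asymptotic stability of the origin. This follows because $\exp_{x^{*}}^{-1}$ is a diffeomorphism on the (relatively compact) neighborhood $\mathcal{N}_{x^{*}}$ and hence bi-Lipschitz there with respect to the intrinsic distance $d$ and the Euclidean norm $\|\cdot\|$ on $T_{x^{*}}M$; in particular there exist constants $c_{1},c_{2}>0$ with
\begin{equation*}
c_{1}\|\exp_{x^{*}}^{-1}(x)\| \;\le\; d(x,x^{*}) \;\le\; c_{2}\|\exp_{x^{*}}^{-1}(x)\|,\qquad x\in \mathcal{N}_{x^{*}}.
\end{equation*}
The uniform bound on $\|T_{x}f(\cdot,t)\|$ likewise yields a uniform local Lipschitz bound on $\tilde f(\cdot,t)$, which is precisely the regularity hypothesis required by the Euclidean converse Lyapunov theorem. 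I then invoke \cite{Kha} to obtain a smooth function $\tilde w:U\times\mathds{R}\to\mathds{R}_{\geq0}$ and class-$\mathcal{K}$ functions $\tilde\alpha_{1},\dots,\tilde\alpha_{4}$ such that $\tilde\alpha_{1}(\|v\|)\le \tilde w(v,t)\le\tilde\alpha_{2}(\|v\|)$, $\mathcal{L}_{\tilde f}\tilde w\le -\tilde\alpha_{3}(\|v\|)$, and $\|\partial_{v}\tilde w\|\le\tilde\alpha_{4}(\|v\|)$ on some neighborhood of $0$.

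Next I would pull $\tilde w$ back to the manifold by setting $w(x,t)\doteq \tilde w(\exp_{x^{*}}^{-1}(x),t)$ on some $\mathcal{U}_{x^{*}}\subset\mathcal{U}^{n}_{x^{*}}$, with $\mathcal{U}_{x^{*}}$ chosen so that $\exp_{x^{*}}^{-1}(\mathcal{U}_{x^{*}})$ lies in the sublevel set of $\tilde w$ on which the classical estimates are valid and is forward invariant under the flow. Properties (i) and (iii) of \er{koonkoon1} then follow by composing with the bi-Lipschitz estimate above and absorbing the constants into the class-$\mathcal{K}$ functions. Property (ii) follows from the chain rule: since $\exp_{x^{*}}$ intertwines the two flows, $\mathcal{L}_{f(\cdot,t)}w(x,t) = \mathcal{L}_{\tilde f(\cdot,t)}\tilde w(\exp_{x^{*}}^{-1}(x),t)\le -\tilde\alpha_{3}(\|\exp_{x^{*}}^{-1}(x)\|)\le -\alpha_{3}(d(x,x^{*}))$ for an appropriate $\alpha_{3}\in\mathcal{K}$.

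The main obstacle is purely technical and concerns the gradient bound (iii): the pushforward $T_{x}w$ equals $T_{v}\tilde w\circ T_{x}\exp_{x^{*}}^{-1}$, and bounding its operator norm in the Riemannian norm $g^{M}$ requires controlling the distortion of $T_{x}\exp_{x^{*}}^{-1}$ away from $x^{*}$. This is handled by the Gauss lemma together with smoothness of $\exp_{x^{*}}$ on the relatively compact closure of $\mathcal{U}_{x^{*}}$, which yields a uniform upper bound on $\|T_{x}\exp_{x^{*}}^{-1}\|$ that can again be absorbed into $\alpha_{4}\in\mathcal{K}$. A secondary care point is ensuring that $\mathcal{U}_{x^{*}}$ is chosen small enough that trajectories initiated in $\mathcal{U}_{x^{*}}$ remain in the domain where the chart and the classical converse Lyapunov construction are valid; this follows from taking $\mathcal{U}_{x^{*}}$ inside a sublevel set of the candidate $w$ contained in $\mathcal{N}_{x^{*}}$, which exists because $w$ is positive definite at $x^{*}$ and decreasing along trajectories.
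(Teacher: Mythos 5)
This lemma is not proved in the paper at all: it is quoted verbatim from \cite{Taringoo100}, and the appendix only sketches the proof of the downstream perturbation result (Theorem \ref{t7}), so there is no in-text argument to compare yours against. On its own merits, your reduction is sound and is the standard route for a converse Lyapunov theorem on a manifold: transport the dynamics through the exponential chart at $x^{*}$, invoke the Euclidean converse theorem (Khalil, Thm.\ 4.16, whose hypotheses --- uniform asymptotic stability and a Jacobian bound uniform in $t$ --- you correctly obtain from the stated assumptions and smoothness of $\exp_{x^{*}}$ on a relatively compact subneighborhood), and pull the resulting $\tilde w$ back. Two small simplifications and loose ends are worth noting. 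First, inside a geodesic ball the Gauss lemma gives the exact identity $d(x,x^{*})=\|\exp_{x^{*}}^{-1}(x)\|_{g^{M}_{x^{*}}}$, so your bi-Lipschitz constants $c_{1},c_{2}$ can be taken equal to $1$ and the translation of (i)--(ii) into class-$\mathcal{K}$ bounds in $d(\cdot,x^{*})$ is immediate; the chart distortion only enters in (iii), where your compactness bound on $\|T_{x}\exp_{x^{*}}^{-1}\|$ is the right fix. Second, the lemma asserts $w:M\times\mathds{R}\rightarrow\mathds{R}_{\geq 0}$, whereas your pullback is defined only on $\mathcal{U}^{n}_{x^{*}}$; a smooth cutoff extension (which does not affect the estimates on the smaller $\mathcal{U}_{x^{*}}$) closes this, and your remark about keeping $\mathcal{U}_{x^{*}}$ inside a forward-invariant sublevel set is the appropriate way to ensure trajectories stay where the estimates are valid. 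Also be aware that the paper uses $\mathcal{L}_{f}w$ as the total derivative along trajectories of the time-varying system (as in the proof of Theorem \ref{t7}); your chain-rule identity transfers exactly that quantity, since the chart is time-independent, so the usage is consistent. No genuine gap.
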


Note that the lemma above holds for the time invariant gradient system $\dot{x}=-\sum^{n}_{i=1}\frac{a^{2}_{i}}{2}\nabla_{\frac{\partial}{\partial x_{j}}}J(x)\frac{\partial}{\partial x_{i}}$ since by Lemma \ref{kkir2} the gradient system is locally asymptotically  stable. In this case that the Lyapunov function $w$ is time invariant. By the results of Lemma \ref{kkir2} it has been shown that $J$ can be considered as a Lyapunov function. However, Lyapunov $w$ may not be necessary identical to $J$. One may show that items (i)-(iii) in Lemma \ref{lf1} locally hold for $J$ around $x^{*}$ when  Assumption \ref{as} is satisfied. Also note that for a compact manifold $M$, $||T_{x}f||$ is a bounded operator and the hypothesis of Lemma \ref{lf1} are satisfied for the extremum seeking algorithm (\ref{kk}) on compact manifolds. 

  The following theorem gives the stability of (\ref{per}), where the nominal system is  locally uniformly asymptotically stable.
  \begin{theorem}[\hspace{-.01cm}\cite{Taringoo100}]
  \label{t7}
  Let $x^{*}$ be an equilibrium of dynamical system $\dot{x}=f(x,t)$, which is locally uniformly asymptotically stable (see \cite{Kha}) on a neighborhood $\mathcal{N}_{x^{*}}\subset \mathcal{U}^{n}_{x^{*}}$ ($\mathcal{U}^{n}_{x^{*}}$ is a normal neighborhood around $x^{*}$). Assume the perturbed dynamical system (\ref{per}) is complete and the  Riemannian norm of the perturbation $h\in \mathfrak{X}(M\times \mathds{R})$ is bounded on $\mathcal{N}_{x^{*}}$, i.e. $||h(x,t)||_{g^{M}}\leq \delta,x\in \mathcal{N}_{x^{*}},t\in[t_{0},\infty)$. Then, for sufficiently small $\delta$, there exists a neighborhood $U_{x^{*}}$ and a function $\rho\in\mathcal{K}$, such that 
  \EQ \limsup_{t\rightarrow \infty} d(\Phi_{f+h}(t,t_{0},x_{0}),x^{*})\leq\rho(\delta),\hspace{.2cm}x_{0}\in U_{x^{*}}.\nnum\EN  
  \end{theorem}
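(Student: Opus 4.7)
The plan is to mimic the classical Euclidean perturbed-stability argument (cf.\ Khalil) but using the geodesic Lyapunov function guaranteed by Lemma \ref{lf1}, with the Euclidean gradient estimate replaced by the pushforward bound (iii) and the Euclidean inner product replaced by the Riemannian metric $g^{M}$. Completeness of the perturbed flow is needed only to make the $\limsup$ sensible.

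First I would apply Lemma \ref{lf1} to obtain a smooth Lyapunov function $w:\mathcal{U}_{x^{*}}\times\mathds{R}\rightarrow\mathds{R}_{\geq0}$ together with $\alpha_{1},\alpha_{2},\alpha_{3},\alpha_{4}\in\mathcal{K}$ satisfying (\ref{koonkoon1}). By Lemma \ref{eun} and Lemma \ref{lpp} I can shrink $\mathcal{U}_{x^{*}}$ so that it is a geodesic ball $\exp_{x^{*}}(B_{r_{0}}(0))$ lying inside $\mathcal{N}_{x^{*}}$, and in particular inside a normal neighborhood of $x^{*}$. Next I would compute the Lie derivative of $w$ along the perturbed vector field $f+h$: using linearity of the differential $dw_{x}$ and the identification of $\|T_{x}w\|$ with the Riemannian norm of $\mbox{grad}\,w$ via the Riesz isomorphism (\ref{kosak}),
\begin{eqnarray*}
\mathcal{L}_{f+h}w(x,t) &=& \mathcal{L}_{f}w(x,t) + dw_{x}(h(x,t)) \\
&\leq& -\alpha_{3}(d(x,x^{*})) + \|T_{x}w\|\,\|h(x,t)\|_{g^{M}} \\
&\leq& -\alpha_{3}(d(x,x^{*})) + \alpha_{4}(d(x,x^{*}))\,\delta.
\end{eqnarray*}

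Now I would fix $\theta\in(0,1)$ and define the threshold $\mu(\delta)\doteq \alpha_{3}^{-1}\bigl(\tfrac{1}{1-\theta}\alpha_{4}(r_{0})\,\delta\bigr)$. For $x$ with $\mu(\delta)\leq d(x,x^{*})\leq r_{0}$, monotonicity of $\alpha_{3},\alpha_{4}$ gives $\mathcal{L}_{f+h}w\leq -\theta\,\alpha_{3}(d(x,x^{*}))<0$. By shrinking $\delta$ I can guarantee $\mu(\delta)<r_{0}$. Select $c\in(0,\alpha_{1}(r_{0}))$ and set $\Omega_{c}\doteq\{x\in \mathcal{U}_{x^{*}}\,|\,w(x,t)\leq c\}$; by property (i) and compactness of the closed geodesic ball $\exp_{x^{*}}(\overline{B}_{r_{0}}(0))$, the sublevel set $\Omega_{c}$ is a compact positively invariant neighborhood of $x^{*}$ for the perturbed flow, and completeness of $f+h$ ensures $\Phi_{f+h}(t,t_{0},x_{0})$ exists for all $t\geq t_{0}$ whenever $x_{0}\in\Omega_{c}$. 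On the annular region $\{x\in\Omega_{c}\,|\,d(x,x^{*})\geq\mu(\delta)\}$ the Lie derivative of $w$ is bounded above by a strictly negative constant depending on $\delta$, so a standard comparison argument forces $w(\Phi_{f+h}(t,t_{0},x_{0}),t)\leq \alpha_{2}(\mu(\delta))$ after some finite time. Translating the resulting sublevel bound back into Riemannian distance via property (i) yields
\begin{eqnarray*}
\limsup_{t\rightarrow\infty} d(\Phi_{f+h}(t,t_{0},x_{0}),x^{*}) \leq \alpha_{1}^{-1}(\alpha_{2}(\mu(\delta))) \doteq \rho(\delta),
\end{eqnarray*}
which is class-$\mathcal{K}$ in $\delta$ and vanishes as $\delta\rightarrow 0$; one then sets $U_{x^{*}}\doteq \Omega_{c}$.

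The main technical obstacle I expect is not the dissipation calculation itself but verifying the geometric pieces that are trivial in $\mathds{R}^{n}$: namely, showing that $\Omega_{c}$ is a compact neighborhood of $x^{*}$ contained strictly inside $\mathcal{U}_{x^{*}}$, and that the trajectory cannot leave the domain of $w$ during the integration of the comparison inequality. This rests on the fact that $(M,d)$ is a metric space together with the two-sided class-$\mathcal{K}$ bound in (i), which sandwiches $\Omega_{c}$ between two metric balls around $x^{*}$; once this sandwich is in place, all the remaining estimates go through exactly as in the Euclidean argument.
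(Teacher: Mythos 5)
Your proposal is correct and follows essentially the same route as the paper's proof: the Lyapunov function from Lemma \ref{lf1}, the $\theta$-splitting of $-\alpha_{3}+\delta\alpha_{4}$ on an annulus inside a geodesic ball, positive invariance of a compact sublevel set, and translation of the ultimate sublevel bound back to Riemannian distance via $\alpha_{1},\alpha_{2}$. The only difference is cosmetic: your ultimate bound $\rho(\delta)=\alpha_{1}^{-1}(\alpha_{2}(\mu(\delta)))$ is the standard Khalil composition, whereas the paper's displayed $\rho$ omits the $\alpha_{2}$ factor, which does not affect the class-$\mathcal{K}$ conclusion.
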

  \begin{proof}
  A full version of the proof is given in \cite{Taringoo100}. However, for the completeness of the analysis we present a sketch of the proof in this paper. Following the results of Lemma \ref{lf1}, there exists $\mathcal{U}_{x^{*}}\subset\mathcal{N}_{x^{*}}$, such that (\ref{koonkoon1}) holds for a Lyapunov function $w$. By Lemma \ref{lf1}, there exists $\mathcal{U}_{x^{*}}$ and $\alpha_{3}\in\mathcal{K}$, such that
  \EQ &&\mathfrak{L}_{f+h}w=\mathfrak{L}_{f}w+\mathfrak{L}_{h}w\leq -\alpha_{3}(d(x,x^{*}))+\mathfrak{L}_{h}w,\hspace{.2cm}x\in \mathcal{U}_{x^{*}}.\nnum\EN 
	First we show that the neighborhood $\mathcal{U}_{x^{*}}$ can be shrunk, such that $\Phi_{f+h}(t,t_{0},x_{0})\in \mathcal{U}_{x^{*}},\hspace{.2cm}t\in[t_{0},\infty)$ provided $x_{0}\in \mathcal{U}_{x^{*}}$. By Lemma 6.12 in \cite{Lewis}, there exists a compact sublevel set $\mathcal{N}_{b,t_{0}}(x^{*})\subset\mathcal{U}_{x^{*}}$ where $x^{*}\in int(\mathcal{N}_{b,t_{0}}(x^{*}))$. 
Hence,  by the Shrinking Lemma \cite{Lee4} there exists a precompact neighborhood $\mathcal{W}_{x^{*}}$, such that, $\mathcal{W}_{x^{*}}\subset int(\mathcal{N}_{b,t_{0}}(x^{*}))\subset \mathcal{N}_{b,t_{0}}(x^{*})$, see \cite{Lee4}. Hence, $M-\mathcal{W}_{x^{*}}$ is a closed set and $ \mathcal{N}_{b,t_{0}}(x^{*})\bigcap (M-\mathcal{W}_{x^{*}})$ is a compact set (closed subsets of compact sets are compact). The continuity of $\alpha_{3}$ and $d(\cdot,x^{*})$ together with the compactness of $ \mathcal{N}_{b,t_{0}}(x^{*})\bigcap (M-\mathcal{W}_{x^{*}})$ imply the existence of the following parameter $\mathfrak{M}$,
  \EQ \mathfrak{M}\doteq \sup_{x\in \mathcal{N}_{b,t_{0}}(x^{*})\bigcap (M-\mathcal{W}_{x^{*}})}-\alpha_{3}(d(x,x^{*}))< 0.\nnum\EN
  Note that $\alpha_{3}\in \mathcal{K}$, $x\in \mathcal{N}_{b,t_{0}}(x^{*})\bigcap (M-\mathcal{W}_{x^{*}})$ and since $\mathcal{W}_{x^{*}}$ is a neighborhood of $x^{*}$ then $d(x,x^{*})>0,\hspace{.2cm}x\in \mathcal{N}_{b,t_{0}}(x^{*})\bigcap (M-\mathcal{W}_{x^{*}})$. Therefore, $\mathfrak{M}<0$. Also we have $\mathfrak{L}_{h}w=dw(h)\leq ||dw||\cdot||h||_{g^{M}}\leq \delta ||dw||$, where $||dw||$ is the induced norm of the linear operator $dw:TM\rightarrow \mathds{R}$. The smoothness of $w$ and compactness of $\mathcal{N}_{b,t_{0}}(x^{*})$ together imply $||dw||<\infty$. It is important to note that $||dw||$ is closely related to $||Tw||$ through the component of the Riemannian metric $g$. As is shown by Theorem \ref{lf1}, $||T_{x}w||\leq\alpha_{4}(d(x,x^{*}))$. Hence, the smoothness of $M$ and compactness of $\mathcal{N}_{b,t_{0}}(x^{*})$ imply that $||dw||<\infty$. Note that $||dw||$ is the norm of the linear operator $dw:T_{x}M\rightarrow \mathds{R}$. Hence, for sufficiently small $\delta$, we have $\mathfrak{L}_{f+h}w<0, x\in \mathcal{N}_{b,t_{0}}(x^{*})\bigcap(M-\mathcal{W}_{x^{*}})$. Therefore, the state trajectory $\Phi_{f+h}(t,t_{0},x_{0}),\hspace{.2cm}t\in[t_{0},\infty)$ stays in $\mathcal{U}_{x^{*}}$ for all $x_{0}\in int(\mathcal{N}_{b,t_{0}}(x^{*}))$.
  
   Without loss of generality,  we assume $\mathcal{U}_{x^{*}}=\exp_{x^{*}}B_{r_{1}}(0), r_{1}<i(x^{*})$. Note that for sufficiently small $r_{1}$, we have $\exp_{x^{*}}B_{r_{1}}(0)\subset \mathcal{N}_{b,t_{0}}(x^{*})$.  Then, by the results of Lemma \ref{lf1}, the variation of $w$ along $f+h$ is given by
  \EQ  \mathfrak{L}_{f+h}w&&=\mathfrak{L}_{f}w+\mathfrak{L}_{h}w\leq -\alpha_{3}(d(x,x^{*}))+\mathfrak{L}_{h}w\nnum\\&&=-\alpha_{3}(d(x,x^{*}))+dw(h(x,t))\nnum\\&&=-\alpha_{3}(d(x,x^{*}))+T_{x}w(h(x,t)) \nnum\\&&\leq -\alpha_{3}(d(x,x^{*}))+||T_{x}w||\cdot||h(x,t)||_{g^{M}}\nnum\\&&\leq -\alpha_{3}(d(x,x^{*}))+\delta \alpha_{4}(d(x,x^{*}))\nnum\\&&\leq -(1-\theta)\alpha_{3}(d(x,x^{*}))-\theta\alpha_{3}(d(x,x^{*}))\nnum\\&&+\delta \alpha_{4}(d(x,x^{*}))\leq -(1-\theta)\alpha_{3}(d(x,x^{*})),\hspace{.2cm}\nnum\\&&\mbox{if}\hspace{.2cm} \alpha^{-1}_{3}(\frac{\delta\alpha_{4}(r_{1})}{\theta})\leq d(x,x^{*})\leq r_{1},\nnum\EN
  for some $r_{1}<\iota(x)$, $0<\theta<1$. 
  
  Define $\eta\doteq \alpha_{2}(\alpha^{-1}_{3}(\frac{\delta\alpha_{4}(r_{1})}{\theta}))$, then $\{x\in M\,|\,d(x,x^{*})\leq \alpha^{-1}_{3}(\frac{\delta\alpha_{4}(r_{1})}{\theta})\}\subset\mathcal{N}_{t,\eta}=\{x\in \mathcal{U}_{x^{*}} |\,w(x,t)\leq \eta\}\subset \{x\in M|\,\alpha_{1}(d(x,x^{*}))\leq \eta\}$.
  Hence, solutions initialized in $\{x\in M\,|\,d(x,x^{*})\leq \alpha^{-1}_{3}(\frac{\delta\alpha_{4}(r_{1})}{\theta})\}$ remain in $\{x\in M|\,\alpha_{1}(d(x,x^{*}))\leq \eta\}$ since $\dot{w}<0$ for $x\in \mathcal{N}_{t,\eta}-\{x\in M\,|\,d(x,x^{*})\leq \alpha^{-1}_{3}(\frac{\delta\alpha_{4}(r_{1})}{\theta})\}$. This proves 
  \EQ \limsup_{t\rightarrow \infty} d(\Phi_{f+h}(t,t_{0},x_{0}),x^{*})&\leq& \alpha_{1}^{-1}\left(\alpha^{-1}_{3}\left(\frac{\delta\alpha_{4}(r_{1})}{\theta}\right)\right)\nnum\\&\doteq&\rho(\delta),\nnum\EN
  for any $x_{0}\in U_{x^{*}}\doteq \{x\in M\,|\,d(x,x^{*})< \alpha^{-1}_{3}(\frac{\delta\alpha_{4}(r_{1})}{\theta})\}\bigcap int(\mathcal{N}_{b,t_{0}}(\hat{x}))$. 
\end{proof}  
\bibliographystyle{plain}
\bibliography{HSCC1}
\end{document}